\documentclass{amsart}
\usepackage{graphicx} 
\title{A description of the canonical symplectic form of the cotangent bundle of a generalized flag manifold}
\author{Juan Fervenza \and Lino Grama \and Luiz A. B. San Martin}
\usepackage[english]{babel}
\usepackage{graphicx} 
\usepackage[top=2.5cm,bottom=2.5cm,left=3cm,right=3cm]{geometry}
\usepackage[colorlinks=true,
            linkcolor=blue,
            citecolor=red,
            urlcolor=magenta]{hyperref}
\usepackage[most]{tcolorbox}
\usepackage{amssymb,amsmath,amsthm,amsfonts}
\usepackage{lipsum}
\usepackage[normalem]{ulem}
\usepackage{forest}
\usepackage{color}
\usepackage{fontawesome}
\usepackage{bm}
\usepackage{natbib} 
\usepackage{graphicx,rotating} 
\usepackage{tikz}
\usepackage{empheq}
\usepackage{wrapfig}

\allowdisplaybreaks[1] 
\usetikzlibrary{arrows.meta}
\usepackage{enumerate}
\usepackage{float}
\usepackage[T1]{fontenc}
\usepackage{faktor}
\usepackage{graphics}
\usepackage[dvips]{epsfig}
\usepackage{multirow}
\usepackage{multicol} 
\usepackage{tikz-cd}
\usetikzlibrary{arrows}

\usetikzlibrary{arrows.meta}
\tikzcdset{arrow style=tikz, diagrams={>=stealth}}
\tikzcdset{arrows={line width=0.6pt},arrow style=tikz}

\DeclareMathOperator{\Gr}{Gr}

\DeclareMathOperator{\Hom}{Hom}
\DeclareMathOperator{\SL}{SL}

\DeclareMathOperator{\proj}{proj}

\newcommand{\RE}{\mathrm{Re}}
\newcommand{\IM}{\mathrm{Im}}

\DeclareMathOperator{\id}{id}
\DeclareMathOperator{\Sp}{Sp}
\DeclareMathOperator{\pr}{pr}

\DeclareMathOperator{\SU}{SU}

\DeclareMathOperator{\Stab}{Stab}

\DeclareMathOperator{\tr}{tr}

\DeclareMathOperator{\GL}{GL}

\DeclareMathOperator{\R}{\mathbb{R}}

\DeclareMathOperator{\Ad}{Ad}

\DeclareMathOperator{\Id}{Id}
\DeclareMathOperator{\cl}{cl}

\DeclareMathOperator{\ham}{ham}

\DeclareMathOperator{\spam}{span}

\DeclareMathOperator{\SO}{SO}
\DeclareMathOperator{\LGr}{LGr}

\DeclareMathOperator{\Lie}{Lie}

\DeclareMathOperator{\g}{\mathfrak{g}}

\DeclareMathOperator{\diag}{diag}

\usepackage{icomma} 
\allowdisplaybreaks[4] 
\theoremstyle{plain}
\newtheorem{prop}{Proposition}
\newtheorem{teo}{Theorem}
\newtheorem{introtheorem}{Theorem}

\newtheorem{ex}{Example}
\newtheorem{lema}{Lemma}
\newtheorem{nota}{Notation}
\newtheorem{defi}{Definition}
\newtheorem{cor}{Corollary}
\newtheorem{obs}{Remark}

\begin{document}
\Large
\subjclass[2020]{14M15,17B08,53D05}
\keywords{Exact symplectic form, connections, principal bundles, adjoint orbits.}
\begin{abstract}
We investigate the symplectic geometry of homogeneous spaces associated with semisimple Lie groups, with particular emphasis on cotangent bundles of generalized flag manifolds. Our approach gives an explicit description of the Liouville symplectic form on these spaces in terms of the connections and curvature forms of principal bundles naturally arising from the Lie group structure. This provides a geometric link between the Lie-theoretic properties of generalized flag manifolds and the symplectic structures carried by their cotangent bundles. \vspace{5mm} 
\end{abstract}
\maketitle

\section{Introduction}

Adjoint and coadjoint orbits are classical objects in Lie theory and symplectic geometry. They carry a natural symplectic structure, the Kostant--Kirillov--Souriau (KKS) form, and arise naturally as geometric phase spaces in Hamiltonian dynamics, mechanics, and control theory (see, for instance \cite{baez2021deformations}, \cite{gasparim2021infinitesimally}, \cite{jurdjevic2020affine}, \cite{jurdjevic2022rolling}, \cite{martinez2016semisimple}). They also provide important examples of homogeneous spaces associated with semisimple Lie groups and play a fundamental role in the geometry of flag manifolds.

A particularly interesting situation arises when an adjoint orbit can be identified with the cotangent bundle of a generalized flag manifold. In this case, the same manifold carries two natural symplectic structures: the KKS form coming from the adjoint orbit and the the canonical symplectic form (the Liouville symplectic form) coming from the cotangent bundle. Understanding the relationship between these two forms is a natural question at the intersection of symplectic and homogeneous geometry.

The aim of this work is to give a concrete realization of the canonical symplectic structure of the cotangent bundle $T^{\ast}\mathbb{F}_{\Theta}$ of a generalized flag manifold $\mathbb{F}_{\Theta} =G/P_{\Theta}$, where $G$ is a non-compact semisimple Lie group and $P_{\Theta} \subset G$ is a parabolic subgroup.  To do so, we identify the cotangent bundle as a homogeneous space $T^{\ast}\mathbb{F}_{\Theta} = G/Z_{\Theta}$, where $Z_{\Theta}$ is the centralizer of an element $H_{\Theta}$ lying in the closure $\cl(\mathfrak{a}^{+})$ of the positive Weyl chamber $\mathfrak{a}^{+} \subset \mathfrak{g}$. Once we fix an Iwasawa decomposition $G=KAN$, we can realize the flag $\mathbb{F}_{\Theta}$ as a compact adjoint orbit $\mathbb{F}_{\Theta} = \Ad(K)H_{\Theta} \subset \mathfrak{g}$. The non-compact adjoint orbit $\Ad(G)H_{\Theta}$ is then identified with the cotangent bundle $T^{\ast}\mathbb{F}_{\Theta}$ \cite{gasparim2017adjoint}. Given an adjoint orbit $\mathcal{O}$, it is natural to ask when is the KKS form exact.  This question was first arised by V. Arnold in the case of $G=\SL(n, \mathbb{C})$ and refined in \cite{azada2008symplectic} in the case of a complex Lie group $G$. \begin{introtheorem} [\cite{arnold1995some}, \cite*{azada2008symplectic}]  \label{Teo arnold}
    Let $G$ be a complex Lie group and $H_{0} \in \mathfrak{g}$ a semisimple element. Let $\mathcal{O} = \Ad(G)H_{0}$ be its adjoint orbit and $\kappa$ the (complex) KKS form. Then $\RE(\kappa)$ and $\IM(\kappa)$ are real symplectic forms on $\mathcal{O}$ and 
    \begin{enumerate}[(a)]
        \item The form $\RE(\kappa)$ (resp. $\IM(\kappa)$) is exact if and only if the eigenvalues of $H_{0}$ are real (resp. purely imaginary)
        \item In either case, $\mathcal{O}$ equipped with these real forms is symplectomorphic to $T^{\ast}\mathbb{F}$.
    \end{enumerate}
\end{introtheorem}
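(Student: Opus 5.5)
We argue on the complex orbit $\mathcal{O}=\Ad(G)H_0$ with $G$ complex semisimple, and reduce to a real split picture. Fix a Cartan subalgebra $\mathfrak{h}\ni H_0$ and write $H_0 = X + iY$ with $X,Y$ in the real form $\mathfrak{h}_{\mathbb{R}}$ on which all roots are real. The complex KKS form is
\begin{equation*}
\kappa_{x}([A,x],[B,x]) = \langle x,[A,B]\rangle ,
\end{equation*}
where $\langle\cdot,\cdot\rangle$ is the complex Killing form. Viewing $\mathfrak{g}$ as a real Lie algebra with Killing form $2\RE\langle\cdot,\cdot\rangle$, the form $\RE(\kappa)$ is, up to a constant, the real KKS form of the real orbit $\mathcal{O}$ of the real group $G$. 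The form $\IM(\kappa)$ is the real KKS form attached to $-iH_0$, since $\IM\langle x,\cdot\rangle=\RE\langle -ix,\cdot\rangle$. Both forms are nondegenerate because $\kappa$ is complex nondegenerate. It therefore suffices to prove the statement for $\RE(\kappa)$ and then apply it to $-iH_0$.

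For exactness of $\RE(\kappa)$, first suppose the eigenvalues of $H_0$ are real, i.e.\ $\alpha(H_0)\in\mathbb{R}$ for all roots, so $Y=0$. Regard $G$ as a real reductive group with Iwasawa decomposition $G=KAN$, where $K$ is compact and $\mathfrak{a}=\mathfrak{h}_{\mathbb{R}}$. After conjugation, $H_0\in\cl(\mathfrak{a}^{+})$. The orbit is then $G/Z_{H_0}$, which fibres over the flag $\mathbb{F}=G/P=K/K_{H_0}$ with fibre the unipotent radical $N_{H_0}$, a vector space. Write a point as $x=\Ad(k)(H_0+Z)$ with $Z\in\mathfrak{n}_{H_0}$. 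Since $\pi_2(\mathbb{F})\to H^2$ is generated by the $2$-spheres of rank-one subalgebras, it suffices to check that $[\RE\kappa]$ vanishes on $H^2(\mathcal{O})\cong H^2(\mathbb{F})$. Restricted to the compact orbit $\Ad(K)H_0$, the real KKS form is $\RE\langle H_0,[A,B]\rangle$ for $A,B\in\mathfrak{k}$. On $\mathfrak{k}$ the bracket $[\mathfrak{k},\mathfrak{k}]$ pairs with $H_0\in\mathfrak{a}$ to zero, because $\mathfrak{k}\perp\mathfrak{a}$ (here $\mathfrak{a}=i\mathfrak{t}$). Hence the pullback of $\RE\kappa$ to the zero section $\Ad(K)H_0$ vanishes, and since the zero section is a deformation retract, the class is zero and $\RE\kappa$ is exact.

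Conversely, if some eigenvalue is non-real, then $Y\neq 0$ and some root satisfies $\alpha(Y)\neq0$. Take the $\mathfrak{sl}_2$-triple for $\alpha$ and the corresponding compact $2$-sphere $S=\Ad(\mathrm{SU}(2)_\alpha)H_0$. We have $\int_S\RE\kappa = c\,\RE(i\alpha(H_0))\neq0$ for a nonzero constant $c$, because the compact $\mathfrak{su}(2)$ directions pair with $iY$. By Stokes' theorem, a closed form with a nonzero period on a $2$-cycle is not exact. The same argument applied to $-iH_0$ handles $\IM\kappa$ and purely imaginary eigenvalues.

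For (b), the key step is to identify the exact orbit with $T^*\mathbb{F}$ symplectically. Using the fibration $\Ad(G)H_0\to\Ad(K)H_0\cong\mathbb{F}$ with vector-space fibres $\Ad(k)(H_0+\mathfrak{n}_{H_0})$, we identify $\mathfrak{n}_{H_0}\cong(\mathfrak{g}/\mathfrak{p})^*\cong T^*_{H_0}\mathbb{F}$ via the Killing form. This gives a $K$-equivariant diffeomorphism $\Phi:\mathcal{O}\to T^*\mathbb{F}$ that sends the zero section to $\Ad(K)H_0$. Both $\RE\kappa$ and $\Phi^*\omega_{\mathrm{can}}$ are exact and vanish on the zero section. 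The main obstacle is to show that they are symplectomorphic, not merely cohomologous.

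I would try a Moser argument on the path $\omega_t=(1-t)\RE\kappa+t\Phi^*\omega_{\mathrm{can}}$. This requires nondegeneracy along the path, which holds near the zero section because both forms make the zero section Lagrangian with the same induced identification of the normal bundle. Global control would then come from the $\mathbb{R}_{>0}$ scaling on the fibres, which dilates both forms equivariantly for a suitable Liouville vector field. Alternatively, the hyperkähler / Kronheimer–Biquard picture supplies the symplectomorphism directly, and I would cite it if the Moser flow fails to be complete.
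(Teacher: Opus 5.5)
The paper does not prove this theorem; it is quoted from Arnold and Azad--van den Ban--Biswas. The paper's own Theorem~\ref{can} recovers exactness in the real-eigenvalue case together with (b), by writing down an explicit primitive of the KKS form.

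Your part (a) is correct and takes a different, topological route. The reduction of $\IM\kappa$ to $\RE\kappa$ via $x\mapsto -ix$ is fine. $\Ad(K)H_0$ is isotropic because $[\mathfrak{k},\mathfrak{k}]\subset\mathfrak{k}\perp\mathfrak{a}$, and $\mathcal{O}$ retracts onto it. The $\mathrm{SU}(2)_\alpha$-sphere has a period proportional to $\alpha(Y)\neq 0$. This gives you the converse direction, which the paper never addresses. The sentence about $\pi_2(\mathbb{F})$ is not used and can be dropped.

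The genuine gap is in (b). Your Moser path $\omega_t=(1-t)\RE\kappa+t\Phi^{\ast}\omega_{\mathrm{can}}$ is at best nondegenerate near the zero section. On the noncompact orbit you also need the Moser field to be complete. Neither point is established.

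Homogeneity under the fibre dilations $m_s(\Ad(k)(H_0+Z))=\Ad(k)(H_0+sZ)$ would settle both. Nondegeneracy would then propagate out from the zero section, and the Moser field would be linear on fibres, hence complete. But you only assert $m_s^{\ast}\RE\kappa=s\,\RE\kappa$. This is not formal, since $m_s$ is not induced by the $G$-action. In fact it holds only as a consequence of the identity you are trying to bypass. The Kronheimer--Biquard fallback is a citation, not an argument.

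The missing idea is that no deformation is needed, because the two forms are already proportional. Define $\Phi$ using the real Killing form $B=2\RE\langle\cdot,\cdot\rangle$, and take $\omega_{\mathrm{can}}=-d\Lambda$. The projection $\mathcal{O}=G/Z_{H_0}\to G/P$ is $G$-equivariant. Hence for $x=\Ad(k)(H_0+Z)$ you get $\Phi^{\ast}\Lambda(\widetilde{U}(x))=B(\Ad(k)Z,U)=B(x,U)-B(\Ad(k)H_0,U)$.

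Lift to $G$, where each term is a genuine $1$-form, and take $d$. The first term yields $-B(x,[U,V])$; the second yields nothing. This is exactly the content of Proposition~\ref{liu} and Theorem~\ref{can}. There the second term is $\lambda_{H_\Theta,0}$ applied to the extended $K\times K$-connection. Its curvature, like the quadratic bracket terms, takes values in $\mathfrak{m}_\Theta$, which is Killing-orthogonal to $H_\Theta$.

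Therefore $\Phi^{\ast}\omega_{\mathrm{can}}=2\RE\kappa$. Composing $\Phi$ with fibre scaling by $1/2$ gives the required symplectomorphism, and exactness comes for free.
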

In a recent work, \cite{Grama2026}, the second and third named authors  proved Theorem \ref{Teo arnold} in the case where $G$ is any real semisimple Lie group by obtaining an explicit description of the KKS form on cotangent bundles of maximal flags $\mathbb{F}$ $(\Theta=\emptyset)$ by composing certain linear functionals with curvatures of principal bundles related to certain homogeneous fibrations, a natural phenomenon to expect by the Chern-Weil theory. In particular, they gave a characterization of when is the KKS form on a semisimple adjoint orbit an exact symplectic form. \begin{introtheorem} (\cite{Grama2026}) \label{grama}
    Let $G=KAN$ be an Iwasawa decomposition and let $P=MAN$ be the corresponding minimal parabolic subgroup. Let $H_{0} \in \mathfrak{a}^{+}$ be a real regular element so that $\mathbb{F} = G/P =  \Ad(K)H_{0}$ and identify $T^{\ast}\mathbb{F} = G/MA = \Ad(G)H_{0}$. Then, the Liouville one form on $T^{\ast}\mathbb{F}$ is given by $$\lambda_{H_{0}, 0}^{\ast} (\omega - \omega_{K \times K})$$ where $\lambda_{H_{0}} = \langle H_{0}, \cdot \rangle \colon \mathfrak{m} \oplus \mathfrak{a} \to \mathbb{R} $ is the "height function" on the orbit. Therefore, the canonical symplectic form of $T^{\ast}\mathbb{F}$ is $$\Gamma =- \lambda_{H_{0}, 0}^{\ast} (\Omega - \Omega_{K \times K})$$ where $\Omega$ and $\Omega_{K \times K}$ are the curvatures of the connections $\omega$ and $\omega_{K \times K}$, respectively. In this expression, the component $\lambda_{H_{0}, 0}^{\ast} = \Omega_{\lambda_{H_{0}}}$ is the KKS form on $\Ad(G)H_{0}$.
\end{introtheorem}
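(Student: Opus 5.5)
The plan is to reduce everything to an explicit computation in the coordinates given by the Iwasawa decomposition. Recall that $N$ acts simply transitively on the affine space $H_{0}+\mathfrak{n}$, because $H_{0}$ is regular. Hence every point of $\Ad(G)H_{0}$ can be written as $\Ad(k)(H_{0}+X)$ with $k\in K$ and $X\in\mathfrak{n}$, unique up to the right action of $M$. This gives a $K$-equivariant diffeomorphism
$$K\times_{M}\mathfrak{n}\longrightarrow \Ad(G)H_{0},\qquad [k,X]\mapsto \Ad(k)(H_{0}+X)=\Ad(k\,n_{X})H_{0},$$
where $n_{X}\in N$ is the unique element with $\Ad(n_{X})H_{0}=H_{0}+X$. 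On the other side, $T_{H_{0}}\mathbb{F}\cong \mathfrak{k}/\mathfrak{m}\cong\theta\mathfrak{n}$, and the Cartan--Killing form $\langle\cdot,\cdot\rangle$ pairs $\theta\mathfrak{n}$ nondegenerately with $\mathfrak{n}$. Through this pairing, $K\times_{M}\mathfrak{n}$ is exactly $T^{\ast}\mathbb{F}$, which is the identification used in the statement.

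In these coordinates the Liouville form is easy to write down. If $\pi\colon T^{\ast}\mathbb{F}\to\mathbb{F}$ is the projection, then the Liouville form at $[k,X]$ is $\xi(d\pi(v))$. Pulling back to $K\times\mathfrak{n}$ I expect to get
$$\vartheta_{(k,X)}=\langle X,\,k^{-1}dk\rangle,$$
and this expression is $M$-basic.

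Next I compute the right-hand side on the local lift $s(k,X)=k\,n_{X}\in G$ of the projection $G\to G/MA$. The connection $\omega$ is the $\mathfrak{m}\oplus\mathfrak{a}$-component of the Maurer--Cartan form, taken along the $\langle\cdot,\cdot\rangle$-orthogonal complement $\mathfrak{n}\oplus\theta\mathfrak{n}$. Then
$$\lambda_{H_{0}}\circ s^{\ast}\omega=\langle H_{0},\Ad(n_{X}^{-1})k^{-1}dk+n_{X}^{-1}dn_{X}\rangle=\langle H_{0}+X,\,k^{-1}dk\rangle.$$
Here I use two facts: $n_{X}^{-1}dn_{X}\in\mathfrak{n}\perp H_{0}$, and $\Ad$-invariance of the Killing form. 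The connection $\omega_{K\times K}$ is the one pulled back from the $K$-factor, namely the $\mathfrak{m}$-part of $k^{-1}dk$ corrected by the $\mathfrak{a}$-directions. I expect it to contribute exactly $\langle H_{0},k^{-1}dk\rangle$. The difference is therefore $\langle X,k^{-1}dk\rangle=\vartheta$.

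The main obstacle is getting the definition of $\omega_{K\times K}$ and the gauge bookkeeping right. One must check that $\lambda_{H_{0},0}^{\ast}(\omega-\omega_{K\times K})$ descends to $G/MA$ independently of the lift. This holds because $H_{0}$ is $\Ad(MA)$-fixed and both terms transform in the same way under the structure group, so their difference is tensorial. I would first verify the independence of the lift directly, by replacing $s$ with $s\cdot ma$.

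For the symplectic form, note that $\lambda_{H_{0}}$ vanishes on $[\mathfrak{m}\oplus\mathfrak{a},\mathfrak{m}\oplus\mathfrak{a}]$, since $\langle H_{0},[A,B]\rangle=\langle[H_{0},A],B\rangle=0$. Hence $\lambda_{H_{0}}\circ\Omega=d(\lambda_{H_{0}}\circ\omega)$, and likewise for $\omega_{K\times K}$. It follows that $\Gamma=-d\vartheta=-\lambda_{H_{0},0}^{\ast}(\Omega-\Omega_{K\times K})$.

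Finally, I need to identify $\Omega_{\lambda_{H_{0}}}$ with the KKS form. Evaluate $\Omega$ on horizontal lifts of the fundamental fields $\tilde{A},\tilde{B}$ at $\Ad(g)H_{0}$. The structure equation $\Omega=d\omega+\tfrac12[\omega,\omega]$ gives $-\langle H_{0},[A,B]_{\mathfrak{m}\oplus\mathfrak{a}}\rangle=-\langle H_{0},[A,B]\rangle$, which is the KKS expression up to the sign convention fixed in the paper.
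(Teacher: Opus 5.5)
Your argument is correct, but it checks the Liouville identity differently from the paper, which obtains this statement as the case $\Theta=\emptyset$ of Proposition~\ref{liu} and Theorem~\ref{can}.

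\textbf{The paper's route.} The paper never writes $\Lambda$ in coordinates. It evaluates both sides on the right-invariant fields $U^{r}(g)$:
\begin{itemize}
\item it computes $\lambda_{H_0}\omega(U^{r}(g))=\langle \Ad(g)H_0,U\rangle$ directly;
\item it gets $\lambda_{H_0,0}\Phi^{\ast}\eta(U^{r}(g))=\langle \Ad(k)H_0,U\rangle$ for $g=kan$ from Proposition~\ref{ma invariant};
\item it identifies the difference $\langle \Ad(k)X,U\rangle$ with $\Lambda(\pi_{\ast}U^{r}(g))$, using the moment map of the cotangent lift (Proposition~\ref{mom}) and the decomposition \eqref{ks}.
\end{itemize}
Since the $U^{r}(g)$ span $T_{g}G$, the identity holds on all of $G$ and descent is automatic.

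\textbf{Your route.} You write $\vartheta=\langle X,k^{-1}dk\rangle$ on $K\times\mathfrak{n}$ and pull back along $s(k,X)=kn_{X}$ using only the Maurer--Cartan form. This bypasses moment maps, cotangent lifts and \eqref{ks} entirely. The price is that $s$ only sweeps out the slice $KN$ of $G=KAN$. So your tensoriality argument is genuinely needed. It also suffices: $K\times\mathfrak{n}\to G/MA$ is a surjective submersion, so pullback along it is injective on forms.

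\textbf{The $\omega_{K\times K}$ step.} Note that $\langle H_0,k^{-1}dk\rangle\equiv 0$, because $\mathfrak{k}\perp\mathfrak{s}$ for the Killing form. Hence on your lift $\lambda_{H_0}\,s^{\ast}\omega$ already equals $\vartheta$. The step you left as an expectation is therefore the claim that the $\omega_{K\times K}$-term vanishes along $s$. With the connection you describe, namely $\proj_{\mathfrak{m}}(k^{-1}dk)+a^{-1}da$ in Iwasawa coordinates $g=kan$, this is immediate. That connection is exactly the first component of the paper's $\Phi^{\ast}\eta$. Along $s$ the $A$-part is trivial and $\mathfrak{m}\perp H_0$, so the term is zero. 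Thus $\omega_{K\times K}$ only serves to make $\lambda_{H_0}\omega$ basic; it does not change its value on your section.

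\textbf{What your coordinates also give.} The same description yields $\lambda_{H_0,0}\,\omega_{K\times K}=d\langle H_0,\log a\rangle$, which is exact. Hence $\lambda_{H_0,0}\Omega_{K\times K}=0$, and $\Gamma$ is exactly the KKS form. The paper reaches this through the curvature formula of Theorem~\ref{most} and the fact that $F_{K}^{+}$ is $\mathfrak{m}_\Theta$-valued.

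Finally, your proof that $\lambda_{H_0}$ kills $[\mathfrak{m}\oplus\mathfrak{a},\mathfrak{m}\oplus\mathfrak{a}]$, via $\langle H_0,[A,B]\rangle=\langle [H_0,A],B\rangle=0$, is slicker than the paper's route through Lemma~\ref{ma}. It also works verbatim for any $\Theta$.
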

In this paper, we generalize Theorem \ref{grama} from maximal flags to arbitrary generalized flag manifolds $\mathbb{F}_{\Theta}$.
The case $\Theta\neq\emptyset$ presents several additional difficulties that are absent in the maximal flag setting $\Theta = \emptyset$. The relevant Lie-theoretic structure is governed by the Langlands decomposition of the parabolic subalgebra 
$\mathfrak{p}_{\Theta}=\mathfrak{m}_{\Theta}\oplus\mathfrak{a}_{\Theta}\oplus\mathfrak{n}_{\Theta}^{+}$ (see Section \ref{sec:dec} for details).  Consequently, the principal bundle and connection involved in the description of $T^*\mathbb{F}_{\Theta}$ acquire a more intricate structure, and the corresponding curvature must be carefully taken into account. In particular, the horizontal and vertical components determined by the connection interact with the nontrivial isotropy directions arising from $\mathfrak{m}_{\Theta}$ and $\mathfrak{a}_{\Theta}$. This makes the passage from the Liouville symplectic form to the KKS form substantially more delicate than in the case $\Theta=\emptyset$, and requires an extension of the connection and curvature framework developed for maximal flag manifolds (cf. Section \ref{sec:KKs}). Our main result is the following. 
\\
 \begin{introtheorem} \label{Main Theorem} (Proposition \ref{liu}, Theorem \ref{can})
    Let $H_{\Theta} \in \cl(\mathfrak{a}^{+})$ so that $\mathbb{F}_{\Theta} = G/P_{\Theta} = \Ad(K)H_{\Theta}$ and identify $T^{\ast}\mathbb{F}_{\Theta}= G/Z_{\Theta} = \Ad(G)H_{\Theta}.$ If $\pi \colon G \to G/Z_{\Theta} \cong T^{\ast}\mathbb{F}_{\Theta}$ denotes the canonical projection, then the Liouville one form $\Lambda$ on $T^{\ast}\mathbb{F}_{\Theta}$ is given by \begin{equation*}
    \pi^{\ast}\Lambda =   \lambda_{H_{\Theta}, 0}(\omega_{\Delta}-\Phi^{\ast} \eta).\end{equation*} Therefore, the canonical symplectic form $\Gamma$ of $T^{\ast}\mathbb{F}_{\Theta}$ is given by $$\pi^{\ast} \Gamma = - \pi^{\ast} d\Lambda = - \lambda_{H_{\Theta}, 0} (F_{\Delta}-F_{K \times K})$$ where $F_{\Delta}$ and $F_{K \times K}$ are the curvatures of $\omega_{\Delta}$ and $\Phi^{\ast} \eta$, respectively. \\ Moreover, under the identification $T^{\ast}\mathbb{F}_{\Theta} = \Ad(G)H_{\Theta}$ the form $\Gamma$ coincides with the KKS form on the adjoint orbit.  
\end{introtheorem}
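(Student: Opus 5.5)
We will work on $G$ throughout, pulling every form back by $\pi \colon G\to G/Z_{\Theta}$. The identification $T^{\ast}\mathbb{F}_{\Theta}\cong G/Z_{\Theta}$ comes from the diffeomorphism $K\times_{K_{\Theta}}\mathfrak{n}_{\Theta}^{+}\to \Ad(G)H_{\Theta}$, $[k,X]\mapsto \Ad(k)(H_{\Theta}+X)$. Here $T^{\ast}_{H_\Theta}\mathbb{F}_{\Theta}\cong(\mathfrak{k}/\mathfrak{k}_\Theta)^{\ast}$ is identified with $\mathfrak{n}^{+}_{\Theta}$ through the Cartan–Killing form $\langle\cdot,\cdot\rangle$. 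Write $g=k\,a\,n$ using the decomposition $G=K\,P_{\Theta}$.

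\textbf{Liouville form.} The point $\pi(g)$ corresponds to the covector $\xi=\Ad(k)(\Ad(an)H_\Theta-H_\Theta)$ at the base point $kH_\Theta$. The Liouville form evaluated on $dg$ is $\langle \xi, d(\Ad(k)H_\Theta)\rangle$. Translating everything back to $\mathfrak{g}$, this becomes $\langle \Ad(p)H_\Theta - H_\Theta,\,[k^{-1}dk, H_\Theta]\rangle$ with $p=an$.

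Invariance of $\langle\cdot,\cdot\rangle$ gives $\langle H_\Theta,[\Ad(p^{-1})(k^{-1}dk),H_\Theta]\rangle=0$. The formula therefore reduces to $\langle H_\Theta,\Ad(p^{-1})(k^{-1}dk)\rangle-\langle H_\Theta,k^{-1}dk\rangle$, up to the projection onto $\mathfrak{z}_\Theta=\mathfrak{m}_\Theta\oplus\mathfrak{a}_\Theta$. This is exactly where the two connections enter:
\begin{itemize}
\item $\omega_\Delta$ is the connection on $G\to G/Z_\Theta$ given by the $\mathfrak{z}_\Theta$-component of the Maurer–Cartan form $g^{-1}dg$ along the decomposition $\mathfrak{g}=\mathfrak{n}^{-}_\Theta\oplus\mathfrak{z}_\Theta\oplus\mathfrak{n}^{+}_\Theta$;
\item $\Phi^{\ast}\eta$ is the pull-back, via the map $\Phi\colon G\to K$ (or to $K\times K$), of the Maurer–Cartan connection $\eta$ projected onto $\mathfrak{k}_\Theta$.
\end{itemize}
Pairing with $\lambda_{H_\Theta,0}=\langle H_\Theta,\cdot\rangle$ kills $\mathfrak{n}^{\pm}_\Theta$ and $\mathfrak{m}_\Theta$, since $H_\Theta\perp\mathfrak{m}_\Theta$. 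The first term is $\langle H_\Theta,g^{-1}dg\rangle$ and gives $\lambda_{H_\Theta,0}(\omega_\Delta)$; the second gives $\lambda_{H_\Theta,0}(\Phi^{\ast}\eta)$. I will verify these identifications by a pointwise computation on $T_gG$, using left-invariance to reduce to $g=1$ modulo the $P_\Theta$-factor.

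\textbf{Main obstacle.} The hard step is the correct bookkeeping of $\Ad(p)$ with $p\in A N_\Theta$. When $\Theta\neq\emptyset$, the group $P_\Theta$ contains $M_\Theta$ and $K_\Theta\neq M$, so $\Phi$ is only defined modulo $K_\Theta$. I will check that $\lambda_{H_\Theta,0}\circ\Phi^{\ast}\eta$ is still well defined and $Z_\Theta$-basic, using that $H_\Theta$ centralizes $\mathfrak{k}_\Theta$.

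\textbf{Symplectic form.} Once $\pi^{\ast}\Lambda$ is in hand, we differentiate. Because $\lambda_{H_\Theta,0}$ vanishes on $[\mathfrak{z}_\Theta,\mathfrak{z}_\Theta]$ (it is $\Ad(Z_\Theta)$-invariant and kills commutators, since $[\mathfrak{z}_\Theta,\mathfrak{z}_\Theta]\subset\mathfrak{m}_\Theta$), the quadratic term in the structure equations drops out. This gives $d\lambda(\omega)=\lambda(F)$ for both connections, hence $\pi^{\ast}\Gamma=-\lambda_{H_\Theta,0}(F_\Delta-F_{K\times K})$.

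\textbf{Comparison with KKS.} The curvature $F_{K\times K}$ is the pull-back of a closed $2$-form on $G/K_\Theta$. Evaluated on $\lambda_{H_\Theta,0}$, it is $d$ of the invariant $1$-form $\langle H_\Theta,\eta\rangle$, i.e. the pull-back of $d\langle H_\Theta, k^{-1}dk\rangle$. This is a $K$-invariant form that becomes exact on the cotangent model and, on $\Ad(G)H_\Theta$, is cancelled by the $\Ad(p)$-correction. Concretely, I will show $\lambda_{H_\Theta,0}(F_\Delta)(X,Y)=-\langle H_\Theta,[X,Y]\rangle$ on horizontal vectors, which is the KKS form at $H_\Theta$.

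It then remains to check that the correction term $\lambda(F_{K\times K})$ vanishes, or equivalently that $\langle H_\Theta,\Phi^{\ast}\eta\rangle$ is closed. This follows because on $K/K_\Theta=\Ad(K)H_\Theta$ the $1$-form $\langle H_\Theta,k^{-1}dk\rangle$ is the differential of the height function restricted to $K$ modulo $K_\Theta$. Having $\Gamma$ equal to KKS then follows by $G$-equivariance of both forms and checking at the single point $H_\Theta$.
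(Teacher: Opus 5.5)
Your differentiation step (the quadratic terms die because $[\mathfrak{z}_{\Theta},\mathfrak{z}_{\Theta}]\subset\mathfrak{m}_{\Theta}\perp H_{\Theta}$) is the paper's argument. The Liouville step, however, fails as you set it up, and it carries the content of the theorem.

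\textbf{The main gap: your reading of $\Phi^{\ast}\eta$.} You take $\Phi^{\ast}\eta$ to be the pull-back, under a map $G\to K$, of the $\mathfrak{k}_{\Theta}$-valued canonical connection of $K\to K/K_{\Theta}$. Since $H_{\Theta}\in\mathfrak{s}$ is Killing-orthogonal to $\mathfrak{k}$, such a form has $\lambda_{H_{\Theta},0}(\Phi^{\ast}\eta)\equiv 0$; your ``second term'' $\langle H_{\Theta},k^{-1}dk\rangle$ is identically zero. Your identity would then say $\pi^{\ast}\Lambda=\langle H_{\Theta},g^{-1}dg\rangle$, which is false. On the left-invariant field $H_{\Theta}^{l}$, which is vertical for $G\to G/Z_{\Theta}$, the left side vanishes while the right side is $\langle H_{\Theta},H_{\Theta}\rangle>0$.

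The slip is in your bookkeeping. We have $g^{-1}dg=\Ad(p^{-1})(k^{-1}dk)+p^{-1}dp$ and $\langle H_{\Theta},p^{-1}dp\rangle=\langle H_{\Theta},a^{-1}da\rangle\neq 0$. So $\langle H_{\Theta},\Ad(p^{-1})k^{-1}dk\rangle$ is \emph{not} $\lambda_{H_{\Theta}}(\omega)$. The dropped term is exactly what the second connection has to supply, and no $\mathfrak{k}_{\Theta}$-valued form can supply it.

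The missing idea is to extend $\omega_{K}$ to a $\mathfrak{z}_{\Theta}$-valued connection $\omega_{K}^{+}$ on $Z_{\Theta}\to G/N_{\Theta}^{+}\cong K\times_{K_{\Theta}}Z_{\Theta}\to\mathbb{F}_{\Theta}$ (Appendix I, Proposition~\ref{prop 4}). Here $\Phi(g)=(gN_{\Theta}^{+},gN_{\Theta}^{-})$ is the diagonal orbit map. The Maurer--Cartan term in the fibre gives $\omega_{K}^{+}$ a nonzero $\mathfrak{a}_{\Theta}$-component. Proposition~\ref{ma invariant} then gives $\lambda_{H_{\Theta}}(\omega_{K}^{+}(\widetilde{U}(gx_{0})))=\langle\Ad(k)H_{\Theta},U\rangle$, which in your coordinates is precisely $\langle H_{\Theta},a^{-1}da\rangle$. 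For the same reason, your plan to show that $\lambda_{H_{\Theta},0}\circ\Phi^{\ast}\eta$ is itself $Z_{\Theta}$-basic cannot work. $\Phi^{\ast}\eta$ is a connection on the $Z_{\Theta}$-bundle, and only the difference $\omega_{\Delta}-\Phi^{\ast}\eta$ is horizontal.

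\textbf{The pairing with $d(\Ad(k)H_{\Theta})$.} Pairing $\xi$ with $d(\Ad(k)H_{\Theta})$ is incompatible with the identification $(\mathfrak{k}/\mathfrak{k}_{\Theta})^{\ast}\cong\mathfrak{n}_{\Theta}^{+}$ you fixed. It gives $\langle X,[k^{-1}dk,H_{\Theta}]\rangle=\sum_{\alpha}\alpha(H_{\Theta})\langle X_{\alpha},k^{-1}dk\rangle$, and the invariance identity you quote does not remove the factors $\alpha(H_{\Theta})$. The correct starting point is $\pi^{\ast}\Lambda=\langle X,k^{-1}dk\rangle$ with $X=\Ad(p)H_{\Theta}-H_{\Theta}$. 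This equals $\langle H_{\Theta},g^{-1}dg\rangle-\langle H_{\Theta},a^{-1}da\rangle$ and agrees with the paper's $\langle\Ad(k)X,U\rangle$ on $U^{r}(g)$.

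\textbf{The vanishing of $\lambda_{H_{\Theta},0}(F_{K\times K})$.} Your argument here is vacuous, because the $1$-form $\langle H_{\Theta},k^{-1}dk\rangle$ you propose to show closed is zero. With the correct $\eta$ you need one of two arguments. Either use the paper's: $F_{K}^{+}=\Ad(z^{-1})F_{K}$ takes values in $\mathfrak{m}_{\Theta}\perp H_{\Theta}$. Or observe that $\langle H_{\Theta},a^{-1}da\rangle=d\langle H_{\Theta},\log a(g)\rangle$ is exact.

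\textbf{The comparison with KKS.} Invariance of $\Gamma$ under the action corresponding to $\Ad(G)$ is not available a priori. The map $\iota$ is only $K$-equivariant, and the cotangent-lifted action does not correspond to $\Ad(G)$. So the comparison with $\kappa$ should be made on $G$, where $\pi^{\ast}\Gamma=-\lambda_{H_{\Theta}}(F)=\pi^{\ast}\kappa$ by the curvature--KKS proposition.
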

In Section \ref{sec:lag}, we develop the Lagrangian Grassmannian $\LGr_{n}$ as a guiding example, illustrating the general constructions and results developed in this paper.


\section{Decomposition of a parabolic subalgebra} \label{sec:dec}

The proof of Theorem \ref{Main Theorem} relies on a detailed understanding of the structure of the parabolic subgroup $P_{\Theta}$. The main ingredient is a careful analysis of the Langlands decomposition of its Lie algebra $\mathfrak{p}_{\Theta}=\Lie(P_{\Theta})$ (\cite{knapp1996lie}, Chapter VII). \\ Let $\mathfrak{g}$ be a semisimple Lie algebra with Iwasawa decomposition $$\mathfrak{g} = \mathfrak{k} \oplus \mathfrak{a} \oplus \mathfrak{n}^{+}.$$ We set $\Delta = \Delta(\mathfrak{g}, \mathfrak{a}) \subset \mathfrak{a}^{\ast}$ the set of restricted roots and we extract a simple system $\Sigma \subset \Delta^{+}$. By a \textit{parabolic subalgebra} of $\mathfrak{g}$ we mean any subalgebra of $\mathfrak{g}$ that contains a \textit{minimal parabolic subalgebra}. We recall that a minimal parabolic subalgebra is, modulo conjugation, given by $$\mathfrak{b} = \mathfrak{m} \oplus \mathfrak{a} \oplus \mathfrak{n}, \qquad \mathfrak{m} = Z_{\mathfrak{k}}(\mathfrak{a}) = \{ X \in \mathfrak{k} : [X, \mathfrak{a}]=0\}.$$ Every parabolic subalgebra $\mathfrak{q} \supset \mathfrak{b}$ is in correspondence with some subset $\Theta \subset \Sigma$ via $$\mathfrak{q} = \mathfrak{g}_{0} \oplus \sum _{\alpha \in \Delta^{+} \cup \langle \Theta \rangle} \mathfrak{g}_{\alpha}, \qquad \langle \Theta \rangle=\spam \Theta \cap \Delta. $$

We consider the subalgebras \begin{align*}
    \mathfrak{a}_{\Theta} &:= \bigcap_{\alpha \in \langle \Theta \rangle}  \ker \alpha = \{ X \in \mathfrak{a} : \alpha(X)=0, \forall \alpha \in \langle \Theta \rangle \} \\ \mathfrak{m}_{\Theta} &:= \underbrace{\mathfrak{m} \oplus \mathfrak{a}_{\Theta}^{\perp_{\mathfrak{a}}}}_{\mathfrak{g}_{0} \ominus\mathfrak{a}_{\Theta}} \oplus \sum _{\alpha \in \langle \Theta \rangle } \mathfrak{g}_{\alpha} \\ \mathfrak{n}_{\Theta}^{+} &:= \sum_{\alpha \in \Delta^{+} \setminus \langle \Theta \rangle} \mathfrak{g}_{\alpha}, \qquad \mathfrak{n}^{\Theta} := \sum_{\alpha \in \Delta^{+} \cap \langle \Theta \rangle} \mathfrak{g}_{\alpha}
\end{align*}
so that \begin{align} \label{langlands} \mathfrak{q} = \mathfrak{m}_{\Theta} \oplus \mathfrak{a}_{\Theta} \oplus \mathfrak{n}_{\Theta}^{+}.
\end{align}
The decomposition (\ref{langlands}) is referred to as the \textit{Langlands decomposition} of $\mathfrak{q}$. We distinguish the reductive subalgebra $$\mathfrak{z}_{\Theta} = \mathfrak{m}_{\Theta} \oplus \mathfrak{a}_{\Theta}  = \mathfrak{g}_{0} \oplus  \sum _{\alpha \in \langle \Theta \rangle} \mathfrak{g}_{\alpha}$$ which corresponds exactly to the centralizer of the element $H_{\Theta} \in \cl(\mathfrak{a}^{+})$ characterized by the property $$\Theta = \{ \alpha \in \Sigma : \alpha(H_{\Theta}) = 0\}.$$ Such $H_{\Theta}$ is called a \textit{characteristic element} of the set $\Theta$.
\begin{lema} \label{ma}
    $[\mathfrak{m}_{\Theta}, \mathfrak{a}_{\Theta}] = 0.$
\end{lema}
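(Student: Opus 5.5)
By bilinearity of the bracket, it suffices to check that each summand in the definition of $\mathfrak{m}_{\Theta}$ commutes with every $H \in \mathfrak{a}_{\Theta}$. Write
$$\mathfrak{m}_{\Theta} = \mathfrak{m} \oplus \mathfrak{a}_{\Theta}^{\perp_{\mathfrak{a}}} \oplus \sum_{\alpha \in \langle \Theta \rangle} \mathfrak{g}_{\alpha}.$$
I will treat the three types of summands separately, using only that $\mathfrak{a}_{\Theta} \subset \mathfrak{a}$ and the definition of restricted root spaces.

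First, $\mathfrak{m} = Z_{\mathfrak{k}}(\mathfrak{a})$ commutes with all of $\mathfrak{a}$, hence with $\mathfrak{a}_{\Theta}$. Second, $\mathfrak{a}_{\Theta}^{\perp_{\mathfrak{a}}} \subset \mathfrak{a}$, and $\mathfrak{a}$ is abelian, so $[\mathfrak{a}_{\Theta}^{\perp_{\mathfrak{a}}}, \mathfrak{a}_{\Theta}] = 0$. Third, take $\alpha \in \langle \Theta \rangle$, $X \in \mathfrak{g}_{\alpha}$ and $H \in \mathfrak{a}_{\Theta}$. By the definition of a restricted root space, $[H, X] = \alpha(H) X$. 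By the definition of $\mathfrak{a}_{\Theta} = \bigcap_{\beta \in \langle \Theta \rangle} \ker \beta$, we have $\alpha(H) = 0$, so $[H, X] = 0$.

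Summing the three cases gives $[\mathfrak{m}_{\Theta}, \mathfrak{a}_{\Theta}] = 0$. The only point needing care is the third case, and specifically that the root spaces appearing in $\mathfrak{m}_{\Theta}$ are indexed by all of $\langle \Theta \rangle = \spam\Theta \cap \Delta$, positive and negative roots alike. The intersection defining $\mathfrak{a}_{\Theta}$ runs over this same set, so every such root vanishes on $\mathfrak{a}_{\Theta}$. Even if one only assumed $\alpha(H) = 0$ for $\alpha \in \Theta$, linearity would extend the vanishing to $\spam \Theta$ and hence to all of $\langle \Theta \rangle$. Nothing beyond the definitions is needed.
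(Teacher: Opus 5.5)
Your proof is correct and follows essentially the same argument as the paper. You decompose $\mathfrak{m}_{\Theta}$ into $\mathfrak{m}$, $\mathfrak{a}_{\Theta}^{\perp_{\mathfrak{a}}}$ and the root spaces $\mathfrak{g}_{\alpha}$ with $\alpha \in \langle \Theta \rangle$, and kill each bracket with $\mathfrak{a}_{\Theta}$ using $\mathfrak{m} = Z_{\mathfrak{k}}(\mathfrak{a})$, the commutativity of $\mathfrak{a}$, and $\alpha\vert_{\mathfrak{a}_{\Theta}} = 0$, respectively; your write-up is cleaner than the paper's, whose labelling of the three components is garbled.
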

\begin{proof}
    Write $X \in \mathfrak{m}_{\Theta}$ as $X = Y+Z+W$ where $X \in \mathfrak{m}$, $Y \in \mathfrak{a}_{\Theta}^{\perp}$, $W \in \mathfrak{g}_{\alpha}$ with $\alpha \in \langle \Theta \rangle.$ Then for $A \in \mathfrak{a}_{\Theta}$ we have $$[A, X] = [A, Y] + [A, Z] + [A, W].$$
    The first term of the right hand side is zero because $\mathfrak{m} = Z_{K}(\mathfrak{a})$, the second one is zero as $\mathfrak{a}$ is abelian, and the third one equals $$[A, W] = \alpha(A)W = 0$$ because $\alpha \in \langle \Theta \rangle$ and $A \vert_{\langle \Theta \rangle} = 0$.
\end{proof}
\section{The KKS form and curvature} \label{sec:KKs}
We recall quickly the definition of the KKS symplectic structure on  adjoint orbits (\cite{besse2007einstein}, Chapter 8, pp. 220). Let $G$ be a semisimple Lie group with Lie algebra $\mathfrak{g}$ and Killing form $\langle \cdot, \cdot \rangle$. The $G$-adjoint orbit through an arbitrary element $H_{0} \in \mathfrak{g}$ is the set $$\mathcal{O}(H_{0}) = \Ad(G)H_{0}  = \{\Ad(g)H_{0}: g \in G\} \subset \mathfrak{g}.$$ For brevity we shall call $gH_{0}$ instead of $\Ad(g)H_{0}$ for a generic orbit element. We recall that each $X \in \mathfrak{g}$ induces a vector field $\widetilde{X}$ given by $$\widetilde{X}(gH_{0})  = \dfrac{d}{dt} \Big\vert_{t=0}\Ad(\exp tX) gH_{0} = [X, gH_{0}]$$ the so-called fundamental vector field attached to $X \in \mathfrak{g}$.

\begin{defi}
    We define the \textit{Kostant-Kirillov-Souriau} (KKS) symplectic form $\kappa \in \Omega^{2}(\mathcal{O}, \mathbb{R})$ by $$\kappa_{gH_{0}}(\widetilde{X}(gH_{0}), \widetilde{Y}(gH_{0})) :=\langle gH_{0}, [X, Y]\rangle.$$ 
\end{defi}
We refer to \cite{san2021lie}, Chapter 14, for an extensive study on the KKS form including examples. 
\begin{ex} 
    There is a Lie algebra isomorphism 
\[
\varphi \colon (\mathbb{R}^{3}, \times) \to (\mathfrak{so}(3), [\cdot, \cdot ])
\]
given by
\[
\varphi(x_{1}, x_{2}, x_{3}) =
\begin{pmatrix}
0 & -x_{3} & x_{2} \\
x_{3} & 0 & -x_{1} \\
-x_{2} & x_{1} & 0
\end{pmatrix}.
\]
If we define the inner product $$\langle X, Y \rangle = -\dfrac{1}{2} \tr (XY), \qquad X, Y \in \mathfrak{so}(3)$$ then $\varphi$ is an isometry (with respect to the standard euclidean metric in $\R^{3}$).
Moreover, $\varphi$ is $\SO(3)$-equivariant, and it restricts to a diffeomorphism $ \varphi \colon S^{2} \to M$ where $M = \Ad(\SO(3)) w$ with $w = \varphi(e_{3})$. Fix $p \in S^{2}$ and $u, v \in T_{p}S^{2}$, then $u = p \times \widetilde{u}$ and $v = p \times \widetilde{v}$. Let $F$ be the KKS form in $M$, then \begin{align*}(\varphi^{\ast} F)_{p}(u, v) &= F_{\varphi(p)}(\varphi u, \varphi v) = F_{\varphi(p)}([\varphi(p), \varphi(\widetilde{u})], [\varphi(p), \varphi(\widetilde{v})]) \\& = \langle \varphi(p), [\varphi(\widetilde{u}), \varphi(\widetilde{v})] \rangle = \langle \varphi(p), \varphi(\widetilde{u} \times \widetilde{v} ) \rangle = \langle p, \widetilde{u} \times \widetilde{v}  \rangle \\&= \langle p, u \times v \rangle \end{align*}
Therefore $\varphi^{\ast} F$ is the standard symplectic form in $S^{2}$, which is not exact.
\end{ex}
\begin{ex}
We recall the spectral theorem: If $X \in M_{n}(\mathbb{C})$ is normal (i.e. $XX^{\ast} = X^{\ast}X)$, then there exists a matrix $g \in U(n)$ s.t. $gXg^{-1} = \diag(\lambda_{1}, \hdots, \lambda_{n})$.
    Let $M = \Ad(\SU(2)) H$ where $H = \diag(i, -i) \in \mathfrak{su}(2).$ By the spectral theorem we have that $M$ consists on all matrices $X \in \mathfrak{su}(2)$ with eigenvalues $\pm i$. Let $$\varphi \colon X \in M \longmapsto \ker (X-i\Id) \in \mathbb{CP}^{1}$$
    We claim that $\varphi$ is a $\SU(2)$-equivariant diffeomorphism, indeed \begin{align*}
         v \in g \ker (X- i \Id) &\iff g^{-1}v \in \ker (X-i \Id) \\&\iff (X-i\Id)g^{-1}v = 0 \\&\iff g (X-i \Id)g^{-1} v = 0 \\& \iff (gXg^{-1}- i \Id)v = 0 \\& \iff v \in \ker (gXg^{-1} - i \Id).
    \end{align*}
    Moreover, $\varphi(H) = \mathbb{C}e_{1}$.
    Fix the canonical hermitian metric $\langle \cdot, \cdot \rangle $ in $\mathbb{C}^{2}$ so that for every line $L \in \mathbb{CP}^{1}$ we have a decomposition $\mathbb{C}^{2} = L \oplus L^{\perp}$. It is a classical result that $$T_{L} \mathbb{CP}^{1} \cong \Hom_{\mathbb{C}}(L, L^{\perp}).$$
    In particular, if $L = \mathbb{C}e_{1}$, then every map $u \colon L \to L^{\perp}$ is a matrix of the form $$u = \begin{pmatrix} 0 & -\overline{z} \\ z & 0 \end{pmatrix}$$
    and the Fubini-Study metric is given by  $$\omega_{FS}(u, v) = \IM(z\overline{w})$$
    Now, let $F$ be the KKS form on $M$. Every matrix $A \in \mathfrak{su}(2)$ is of the form $$A = \begin{pmatrix} ia & -\overline{z} \\ z & -ia \end{pmatrix} \implies [A, H] = \begin{pmatrix} 0 & 2i\overline{z} \\ 2iz & 0  \end{pmatrix} \in T_{H}M.$$
    Then $$F_{H}([A, H], [B, H]) = \langle H, [A, B] \rangle = -\tr (H \cdot [A, B])$$ We write $B = \begin{pmatrix} ib & -\overline{w} \\ w & -ib\end{pmatrix}$ to conclude that $$[A, B] = \begin{pmatrix} 2i \IM(z\overline{w}) & \ast \\ \ast & -2i\IM(z\overline{w}) \\  \end{pmatrix}$$ whence $$F_{H}([A, H], [B, H]) = 4\IM(z\overline{w})$$ which is (after suitable normalization)
    the Fubini-Study symplectic form of $\mathbb{CP}^{1}$ once we identify $\varphi_{\ast}[A, H] \cong z, \varphi_{\ast}[B, H] \cong w \in T_{L}\mathbb{CP} ^{1}$.
    \end{ex}
 We shall establish a relation between the curvature of the canonical left-invariant connection on principal bundle $Z_{\Theta} \to G \to G/Z_{\Theta}$ with the KKS form on the adjoint orbit $\Ad(G)H_{\Theta} \cong G/Z_{\Theta}$ by means of natural choices of $\Ad(Z_{\Theta})$-invariant linear functionals. 
\\To be precise, let $G$ be a semisimple Lie group with $G=KAN$ a fixed Iwasawa decomposition. At the Lie algebra level we have $\mathfrak{g}=\mathfrak{k} \oplus \mathfrak{a} \oplus \mathfrak{n}.$ Denote as usual, $\Sigma \subset \Delta^{+} \subset \mathfrak{a}^{\ast}$ the set of restricted simple roots. Fix once and for all a subset of simple roots $\Theta \subset \Sigma$ and write  $$\mathfrak{g} = \mathfrak{n}_{\Theta}^{-} \oplus \mathfrak{z}_{\Theta} \oplus \mathfrak{n}_{\Theta}^{+}.$$ We consider $\mathfrak{p}_{\Theta} = \mathfrak{z}_{\Theta} \oplus \mathfrak{n}_{\Theta}^{+}$ the parabolic subalgebra with parabolic subgroup $$P_{\Theta}=N_{G}(\mathfrak{p}_{\Theta}) = \{ g \in G : \Ad(g)\mathfrak{p}_{\Theta} \subset \mathfrak{p}_{\Theta}\}$$ for details, see \cite{wolf1998flag}. The Langlands decomposition of $\mathfrak{p}_{\Theta}$ yields a group decomposition $$P_{\Theta} = Z_{\Theta}N_{\Theta}^{+} = M_{\Theta} A_{\Theta} N_{\Theta}^{+}$$ where $Z_{\Theta}$ is the centralizer of $H_{\Theta}$ inside $G$, $$Z_{\Theta} = \{ g \in G : \Ad(g)H_{\Theta} =H_{\Theta}\}.$$ Thus we regard the $G$-adjoint orbit through $H_{\Theta}$ as a homogeneous space $$G/Z_{\Theta} \cong \Ad(G)H_{\Theta}$$ because $G$ acts transitively, through the adjoint, on $\Ad(G)H_{\Theta}$ and the stabilizer of the action is precisely $Z_{\Theta}$.

\begin{prop} \label{K stab}
       Let $H_{\Theta}$ be the characteristic element of $\Theta \subset \Sigma$. Then, the subgroup $K_{\Theta}:= P_{\Theta} \cap K \subset G$ equals the centralizer of $H_{\Theta}$ in $K$, $$K_{\Theta} = Z_{K}(H_{\Theta}) = \{k \in K : \Ad(k) H_{\Theta} = H_{\Theta}\} = K \cap Z_{\Theta}.$$
    \end{prop}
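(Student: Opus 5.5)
We must show $K\cap P_\Theta = K\cap Z_\Theta$. Since $Z_\Theta\subset P_\Theta$ by the decomposition $P_\Theta = Z_\Theta N_\Theta^+$, we have $K\cap Z_\Theta\subset K\cap P_\Theta$. Moreover $K\cap Z_\Theta = Z_K(H_\Theta)$ by the definition of $Z_\Theta$ as the centralizer of $H_\Theta$ in $G$. So the whole content is the reverse inclusion $K_\Theta\subset Z_K(H_\Theta)$.

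Take $k\in K\cap P_\Theta$ and write $k = z n$ with $z\in Z_\Theta$ and $n\in N_\Theta^+$. Apply $\Ad(k)$ to $H_\Theta$. Since $\Ad(z)H_\Theta = H_\Theta$, we get
\[
\Ad(k)H_\Theta = \Ad(z)\Ad(n)H_\Theta .
\]
Because $n\in\exp\mathfrak{n}_\Theta^+$ and $[\mathfrak{n}_\Theta^+,H_\Theta]\subset\mathfrak{n}_\Theta^+$, we have $\Ad(n)H_\Theta = H_\Theta + Y$ with $Y\in\mathfrak{n}_\Theta^+$. Also $\Ad(z)$ normalizes $\mathfrak{n}_\Theta^+$: the group $Z_\Theta$ preserves each eigenspace of $\operatorname{ad}(H_\Theta)$, and $\mathfrak{n}_\Theta^+$ is the sum of the eigenspaces with positive eigenvalues. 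Hence
\[
\Ad(k)H_\Theta = H_\Theta + Y', \qquad Y' = \Ad(z)Y \in \mathfrak{n}_\Theta^+ .
\]

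Now use that $k\in K$. Let $\theta$ be the Cartan involution; then $\Ad(k)$ commutes with $\theta$, and $\theta H_\Theta = -H_\Theta$ since $H_\Theta\in\mathfrak{a}\subset\mathfrak{s}$. Therefore $\Ad(k)H_\Theta$ lies in $\mathfrak{s}$, i.e. $\theta(\Ad(k)H_\Theta) = -\Ad(k)H_\Theta$. Applying $\theta$ to the previous expression gives
\[
-H_\Theta + \theta Y' = -H_\Theta - Y' .
\]
Here $\theta Y'\in\mathfrak{n}_\Theta^-$ and $Y'\in\mathfrak{n}_\Theta^+$, and $\mathfrak{n}_\Theta^-\cap\mathfrak{n}_\Theta^+ = 0$. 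So $Y' = 0$, which means $\Ad(k)H_\Theta = H_\Theta$ and $k\in Z_K(H_\Theta)$.

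The only delicate point is the structural fact $P_\Theta = Z_\Theta N_\Theta^+$ with $Z_\Theta$ normalizing $\mathfrak{n}_\Theta^+$, when $P_\Theta$ is defined as the normalizer $N_G(\mathfrak{p}_\Theta)$. This is stated in the paper (with reference to Wolf), and I would simply invoke it.

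An alternative route avoids $\theta$. Use the Killing-form norm, which is $\Ad(K)$-invariant, and the orthogonality of $\mathfrak{a}$ and the root spaces under $B_\theta$. Then
\[
|H_\Theta + Y'|^2 = |H_\Theta|^2 + |Y'|^2 ,
\]
and since this must equal $|H_\Theta|^2$, we again get $Y'=0$.
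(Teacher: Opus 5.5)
Your proof is correct, but you use the Cartan involution differently from the paper. The paper argues at the group level in one line: for $k\in K\cap P_\Theta$, $k=\theta(k)\in\theta(P_\Theta)=P_\Theta^-$, so $k\in P_\Theta\cap P_\Theta^-=Z_\Theta$. This is short and symmetric. However, it rests on two structural facts the paper does not justify, $\theta(P_\Theta)=P_\Theta^-$ and $P_\Theta\cap P_\Theta^-=Z_\Theta$, and the second carries essentially all of the content.

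You instead follow the single vector $\Ad(k)H_\Theta$. The decomposition $P_\Theta=Z_\Theta N_\Theta^+$ (which the paper does state), together with the description of $\mathfrak{n}_\Theta^+$ as the positive eigenspaces of $\operatorname{ad}(H_\Theta)$, places $\Ad(k)H_\Theta$ in the affine slice $H_\Theta+\mathfrak{n}_\Theta^+$. Meanwhile $k\in K$ forces it into $\mathfrak{s}$, or in your variant forces it to have the same $B_\theta$-norm as $H_\Theta$. The slice meets either constraint only at $H_\Theta$. So your route needs only the Langlands decomposition and root-space linear algebra. You also record the easy inclusion $K\cap Z_\Theta\subset K\cap P_\Theta$, which the paper leaves implicit.

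The same slice comparison, $(H_\Theta+\mathfrak{n}_\Theta^+)\cap(H_\Theta+\mathfrak{n}_\Theta^-)=\{H_\Theta\}$, is one way to prove the fact $P_\Theta\cap P_\Theta^-=Z_\Theta$ that the paper takes for granted. In that sense your argument supplies the ingredient the paper's one-liner outsources.

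One small wording point: in the norm variant, the $\Ad(K)$-invariant norm comes from $B_\theta(X,Y)=-\langle X,\theta Y\rangle$. It does not come from the Killing form itself, which is indefinite.
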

    \begin{proof}
        Let $\theta \colon G \to G$ denote the global Cartan involution, i.e., $K = \{k \in G : \theta(k) = k\}$ and $\theta (N^{+}) = N^{-}$. Then if $k \in P_{\Theta} \cap K$, then $\theta(k) = k \in P_{\Theta}^{-}$, hence $k \in P_{\Theta} \cap P_{\Theta}^{-} = Z_{\Theta}$ and we are done. 
    \end{proof}
 In view of Proposition \ref{K stab}, we have a realization of the flag manifold $\mathbb{F}_{\Theta}$ induced by $\Theta$ as an adjoint orbit of the maximal compact group $K$, $$\mathbb{F}_{\Theta} = G/P_{\Theta} \cong K/K_{\Theta} \cong \Ad(K)H_{\Theta}.$$ 

The $\mathfrak{z}_{\Theta}$-valued form $\omega \in \Omega^{1}(G, \mathfrak{z}_{\Theta})$ given by $$\omega_{g}(v) = \proj_{\mathfrak{z}_{\Theta}} (dL_{g^{-1}})_{g}v, \qquad v \in T_{g}G$$ defines a left-invariant connection on the principal $Z_{\Theta}$-bundle $G \to G/Z_{\Theta}$ and its associated distribution is $$\mathcal{H}_{g} = d(L_{g})_{e}(\mathfrak{n}_{\Theta}^{-} \oplus \mathfrak{n}_{\Theta}^{+}), \qquad g \in G.$$
\begin{nota}For any $U \in \mathfrak{g}$ we shall denote by $\widetilde{U} \in \mathfrak{X}(G/Z_{\Theta})$ the fundamental vector field associated to the left-translation of $G$ on $G/Z_{\Theta}$, that is, $$\widetilde{U}(gZ_{\Theta}) = \dfrac{d}{dt} \Big\vert_{t=0} \exp tU \cdot gZ_{\Theta}.$$ Consider $U^{r} \in \mathfrak{X}(G)$ the right-invariant vector field $$U^{r}(g) = \dfrac{d}{dt} \Big\vert_{t=0} \exp tU \cdot g = \dfrac{d}{dt} \Big\vert_{t=0} R_{g} (\exp tU) = (R_{g})_{\ast} U.$$
If $\pi \colon G \to G/Z_{\Theta}$, then $$d\pi_{g} (U^{r}(g)) = \dfrac{d}{dt} \Big\vert_{t=0} \pi (\exp tU \cdot g) = \dfrac{d}{dt} \Big\vert_{t=0} \exp tU \cdot gZ_{\Theta} = \widetilde{U}(gZ_{\Theta}).$$ 
That is \begin{equation} \label{fundamental}
\pi_{\ast}U^{r} = \widetilde{U} \circ \pi.
\end{equation}
\end{nota}
\begin{lema} \label{lema 2}
    Let $\omega \in \Omega^{1}(G, \mathfrak{z}_{\Theta})$ be the $G$-invariant connection and $v \in \mathcal{H}_{g}$ a horizontal vector. Then there exist $X \in \mathfrak{g}$ such that $X^{r}(g) = v.$
\end{lema}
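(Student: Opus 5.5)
Every tangent vector at $g$ is both a left translate and a right translate of a Lie algebra element, so the lemma reduces to converting between the two descriptions through $\Ad(g)$. Since $\mathcal{H}_{g} = d(L_{g})_{e}(\mathfrak{n}_{\Theta}^{-} \oplus \mathfrak{n}_{\Theta}^{+})$, I write the given horizontal vector as $v = d(L_{g})_{e} Y$ for a unique $Y \in \mathfrak{n}_{\Theta}^{-} \oplus \mathfrak{n}_{\Theta}^{+}$. I then look for $X \in \mathfrak{g}$ with
\begin{equation*}
X^{r}(g) = d(R_{g})_{e} X = v = d(L_{g})_{e} Y .
\end{equation*}

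The key step uses the identity $L_{g} = R_{g} \circ C_{g}$, where $C_{g}(h) = ghg^{-1}$ is conjugation. This identity holds because $gh = (ghg^{-1})g$. Differentiating at $e$ gives
\begin{equation*}
d(L_{g})_{e} Y = d(R_{g})_{e}\, \Ad(g) Y .
\end{equation*}
I therefore take $X := \Ad(g) Y$. With this choice,
\begin{equation*}
X^{r}(g) = (R_{g})_{\ast} X = (R_{g})_{\ast}\Ad(g)Y = (L_{g})_{\ast} Y = v,
\end{equation*}
as required. Since $d(R_{g})_{e}$ is invertible, this $X$ is in fact the unique element with $X^{r}(g) = v$.

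There is no real obstacle. The only point needing care is the conventions. Here $U^{r}$ is defined as $(R_{g})_{\ast}U$ through the curve $\exp(tU)\, g$, so the conversion factor is $\Ad(g)$ and not $\Ad(g^{-1})$. I would check this directly: $\exp(t\Ad(g)Y)\, g = g \exp(tY) g^{-1} g = g\exp(tY)$, and differentiating at $t=0$ gives $d(L_{g})_{e} Y$.

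I would also record that horizontality is not actually needed for existence; any $v \in T_{g}G$ is of the form $X^{r}(g)$. What the horizontal hypothesis contributes is extra information about $X$, namely $X \in \Ad(g)(\mathfrak{n}_{\Theta}^{-} \oplus \mathfrak{n}_{\Theta}^{+})$. Combined with equation (\ref{fundamental}), this gives $d\pi_{g}(v) = \widetilde{X}(gZ_{\Theta})$. That is the form in which the lemma will presumably be used later when comparing the connection with the fundamental vector fields.
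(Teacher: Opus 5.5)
Your proof is correct and is essentially the paper's: the paper simply takes $X = d(R_{g^{-1}})_{g}v$, and your $X = \Ad(g)Y$ is exactly this element, since $d(R_{g^{-1}})_{g}\circ d(L_{g})_{e} = d(C_{g})_{e} = \Ad(g)$. Your remark that horizontality is not needed for existence is also consistent with the paper's one-line argument, which never uses it.
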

\begin{proof}
    Just take $X = d(R_{g^{-1}})_{g}v \in \mathfrak{g}$.
\end{proof}

If $\langle \cdot, \cdot \rangle $ denotes the Killing form of $\mathfrak{g}$, then  $$\lambda_{H_{\Theta}} := \langle H_{\Theta}, \cdot \rangle \in \mathfrak{g}^{\ast}$$ is $\Ad(Z_{\Theta})$-invariant. Indeed, for any $z \in Z_{\Theta} = Z_{G}(H_{\Theta})$ we have  $$\lambda_{H_{\Theta}}(\Ad(z) X) = \langle H_{\Theta}, \Ad(z)X \rangle = \langle \Ad(z^{-1})H_{\Theta}, X \rangle = \langle H_{\Theta}, X \rangle   = \lambda_{H_{\Theta}}(X).$$ Moreover, we note that $\lambda_{\Theta} \in \mathfrak{z}_{\Theta}^{\ast}$ since $\mathfrak{z}_{\Theta}$ and $ (\mathfrak{n}_{\Theta} \oplus \mathfrak{n}_{\Theta}^{-})$ are orthogonal with respect to the Killing form. 

\begin{prop} Let $\lambda =  \lambda_{H_{\Theta}} \in  \mathfrak{z}_{\Theta}^{\ast}$ and $F$ be the curvature of the canonical connection form $\omega \in \Omega^{1}(G, \mathfrak{z}_{\Theta})$ on the principal $Z_{\Theta}$-bundle $G \to G/Z_{\Theta}$.
     Then, $\lambda F \in \Omega^{2}(G, \mathbb{R})$ given by  $\lambda F (X, Y):= \lambda(F(X, Y))$ descends to a two form $F_{\lambda}$ in $G/Z_{\Theta}$ by the relation $$\lambda F = \pi^{\ast} F_{\lambda}, \qquad \pi \colon G \to G/Z_{\Theta}.$$ Moreover, $F_{\lambda}$ is the negative KKS form $-\kappa$ on the adjoint orbit $\Ad(G)H_{\Theta} \cong G/Z_{\Theta}$.
\end{prop}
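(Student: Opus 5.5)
The proof has two parts: first show that $\lambda F$ is basic for $\pi$, and then evaluate it on fundamental vector fields and compare with the KKS form.

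\textbf{Step 1: $\lambda F$ descends.} The curvature $F$ of the connection $\omega$ is a tensorial $2$-form of type $\Ad$. Concretely, it is horizontal, so $F(V,\cdot)=0$ for any vertical $V$, and it is $\Ad$-equivariant: $R_z^{\ast}F=\Ad(z^{-1})F$ for $z\in Z_{\Theta}$.

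Since $\lambda$ is $\Ad(Z_{\Theta})$-invariant (shown just before the statement), we get
\[
R_z^{\ast}(\lambda F)=\lambda\bigl(\Ad(z^{-1})F\bigr)=\lambda F .
\]
So $\lambda F$ is both horizontal and $Z_{\Theta}$-invariant. These are exactly the conditions for a form to be basic.

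The fibers $gZ_{\Theta}$ may be disconnected, so I will argue directly rather than appeal to a connectedness result. Define
\[
(F_\lambda)_{\pi(g)}(d\pi\, u, d\pi\, v):=\lambda F_g(u,v).
\]
This is well defined: horizontality shows independence of the lifts $u,v$, and invariance shows independence of the point $g$ chosen in the fiber. Smoothness follows from local sections of $\pi$, and $\lambda F=\pi^{\ast}F_{\lambda}$ holds by construction.

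\textbf{Step 2: identify $F_\lambda$ with $-\kappa$.} I will use the structure equation $F=d\omega+\tfrac12[\omega,\omega]$. On horizontal vectors this reduces to $F(u,v)=d\omega(u,v)$.

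Take $X,Y\in\mathfrak n^-_\Theta\oplus\mathfrak n^+_\Theta$ and use the left-invariant fields $X^l,Y^l$; these are horizontal everywhere. Since $\omega(X^l)=\proj_{\mathfrak z_\Theta}X=0$ and $\omega(Y^l)=0$ are constant, the exterior derivative formula gives
\[
F(X^l,Y^l)=-\omega([X^l,Y^l])=-\proj_{\mathfrak z_\Theta}[X,Y].
\]
The Killing form makes $\mathfrak z_\Theta$ orthogonal to $\mathfrak n^\pm_\Theta$, so applying $\lambda$ gives
\[
\lambda F_e(X,Y)=-\langle H_\Theta,[X,Y]\rangle .
\]
At the identity, $X^l(e)=X^r(e)=X$, so $d\pi_e X=\widetilde X(H_\Theta)$. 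The fields $\widetilde X$ for $X\in\mathfrak n^\pm_\Theta$ span $T_{H_\Theta}\mathcal O$. By the definition of $\kappa$ we obtain
\[
(F_\lambda)_{H_\Theta}=-\kappa_{H_\Theta}.
\]

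\textbf{Step 3: propagate to all points.} Since $\omega$ is left-invariant, $F$ is left-invariant, and hence $F_\lambda$ is $G$-invariant. The KKS form is $G$-invariant as well, because $\Ad(g)$ preserves the Killing form and the bracket. Two $G$-invariant forms that agree at the base point of a homogeneous space agree everywhere.

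\textbf{Main obstacle.} The delicate points are sign and normalization conventions. One is the convention for $d\omega$ on a pair of vector fields (with or without the factor $\tfrac12$). Another is the identification of $\widetilde X$ with $[X,gH_0]$: its sign relative to $d\pi(X^r)$ has to be consistent with the definition of $\kappa$. I would fix $d\omega(A,B)=A\omega(B)-B\omega(A)-\omega([A,B])$ and check that the resulting overall sign comes out as $-\kappa$, as the statement claims.
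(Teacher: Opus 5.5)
Your proof is correct. Step 1 is the paper's argument, and you also spell out the well-definedness and smoothness checks that the paper leaves implicit. The identification with $-\kappa$, however, goes differently.

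The paper works at an arbitrary $g\in G$. It extends horizontal $u,v$ by right-invariant fields $X^{r},Y^{r}$ (Lemma \ref{lema 2}), which project to the fundamental fields $\widetilde X,\widetilde Y$, and evaluates the Cartan formula there. This requires differentiating $\omega(Y^{r})_{g}=\proj_{\mathfrak z_\Theta}(\Ad(g^{-1})Y)$ along $X^{r}$. It produces the pointwise formula $F_g(u,v)=-\proj_{\mathfrak z_\Theta}(\Ad(g^{-1})[X,Y])$, which matches $\kappa_{gH_\Theta}$ at every point at once.

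You instead work at the identity with left-invariant fields, taking $X,Y\in\mathfrak n^{-}_\Theta\oplus\mathfrak n^{+}_\Theta$. These fields are horizontal everywhere, so $\omega(X^{l})\equiv 0$ and the Cartan formula collapses to $-\omega([X,Y]^{l})$. The cost is your Step 3, where you need the $G$-invariance of both forms. For $F_\lambda$ this comes from left-invariance of $\omega$ together with injectivity of $\pi^{\ast}$; for $\kappa$ it comes from $\Ad$-invariance of the Killing form and bracket.

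Your route is computationally lighter. It also avoids the point that right-invariant extensions are horizontal only at the chosen point. The paper's route yields an explicit curvature formula at every point, in the same right-invariant framework it uses later for the Liouville form (Corollary \ref{coro}, Proposition \ref{liu}).
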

\begin{proof}
Let $R_{g} \colon G \to G$ be the right action of $Z_{\Theta}$ on $G$ as a principal bundle. A $k$-form $\eta$ in $G$ descends to $G/Z_{\Theta}$ if it is horizontal and right-invariant. Now, since $g \in Z_{\Theta}$ we have $$R_{g}^{\ast} (\lambda F) = \lambda(R_{g}^{\ast}F) = \lambda(\Ad(g^{-1}) F) = \lambda F$$ hence $\lambda F$ is right invariant. Since $F$ is horizontal, we have that $\lambda F$ is also horizontal and therefore $\lambda F$ descends to $G/Z_{\Theta}$. Now, let $u, v \in \mathcal{H}_{g}$ be horizontal vectors. By Lemma \ref{lema 2} there exists $X, Y \in \mathfrak{g}$ such that $X^{r}(g) = u$ and $Y^{r}(g) = v $, then by the structure equation  \begin{equation} \label{F}  F(u, v) = X^{r}(\omega(Y^{r}))_{g} - Y^{r}(\omega(X^{r}))_{g} + \omega([X, Y]^{r})_{g}\end{equation} because $[X^{r}, Y^{r}] = -[X, Y]^{r}$. First note that $$\omega(Y^{r})_{g} = \proj_{\mathfrak{z}_{\Theta}}(d(L_{g}^{-1})_g \circ d(R_{g})_{e} Y) = \proj_{\mathfrak{z}_{\Theta}} (\Ad(g^{-1}) Y)$$then  \begin{align*}X^{r}(\omega(Y^{r}))_{g} &= \dfrac{d}{dt} \Big\vert_{t=0} \omega(Y^{r})(\exp tX \cdot g) \\& = \dfrac{d}{dt} \Big\vert_{t=0} \proj_{\mathfrak{z}_{\Theta}}(\Ad(g^{-1} \exp (-tX)) Y) \\& = -\proj_{\mathfrak{z}_{\Theta}} (\Ad(g^{-1})[X, Y]). \end{align*}
Thus, Equation (\ref{F}) becomes $$F(u, v) = -\proj_{\mathfrak{z}_{\Theta}}(\Ad(g^{-1})[X, Y])$$ and therefore $$\lambda F(u, v) = -\langle H_{\Theta}, \Ad(g^{-1})[X, Y] \rangle  = - \langle \Ad(g)H_{\Theta}, [X, Y] \rangle.$$
Now, by Equation (\ref{fundamental}) we have $$\pi^{\ast} \kappa(X^{r}(g), Y^{r}(g)) = \kappa(\widetilde{X}(g), \widetilde{Y}(g)) = \langle \Ad(g)H_{\Theta}, [X, Y] \rangle = -\lambda F(X^{r}(g), Y^{r}(g))$$ hence $\pi^{\ast} \kappa = -\lambda F$ and $F_{\lambda} = -\kappa$.

\end{proof}
Now, let us consider the canonical $K$-invariant connection $\omega_{K} \in \Omega^{1}(K, \mathfrak{k}_{\Theta})$ on the principal bundle $$K_{\Theta} \to K \to K/K_{\Theta} \cong \mathbb{F}_{\Theta}.$$ 
We recall that the associated bundle $$K \times_{K_{\Theta}} Z_{\Theta}$$ carries a natural structure of principal $Z_{\Theta}$-bundle over $\mathbb{F}_{\Theta}$ (see Appendix I).
\begin{prop}\label{prop 4}
    The principal bundle $Z_{\Theta} \to G/N_{\Theta}^{+} \to \mathbb{F}_{\Theta}$ is isomorphic to the principal bundle  $Z_{\Theta} \to K \times_{K_{\Theta}} Z_{\Theta} \to \mathbb{F}_{\Theta}.$
\end{prop}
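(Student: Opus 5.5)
The plan is to write down an explicit map $\Psi \colon K \times_{K_{\Theta}} Z_{\Theta} \to G/N_{\Theta}^{+}$, namely $\Psi([k,z]) = kzN_{\Theta}^{+}$, and verify in turn that it is well defined, $Z_{\Theta}$-equivariant, covers the identity of $\mathbb{F}_{\Theta}$, and is a diffeomorphism.

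We first fix the principal structures. On $G/N_{\Theta}^{+}$ the group $Z_{\Theta}$ acts on the right by $gN_{\Theta}^{+}\cdot z = gzN_{\Theta}^{+}$. This is well defined because $Z_{\Theta}$ normalizes $N_{\Theta}^{+}$: $\mathfrak{n}_{\Theta}^{+}$ is $\operatorname{ad}(\mathfrak{z}_{\Theta})$-invariant and, since $Z_{\Theta}$ centralizes $H_{\Theta}$, it preserves the $\operatorname{ad}(H_{\Theta})$-eigenspace decomposition. The projection is $gN_{\Theta}^{+} \mapsto gP_{\Theta}$, which is a principal $Z_{\Theta}\cong P_{\Theta}/N_{\Theta}^{+}$-bundle since $P_{\Theta}=Z_{\Theta}N_{\Theta}^{+}$ is a semidirect product. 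On $K\times_{K_{\Theta}} Z_{\Theta}$ the action is $[k,z]\cdot z' = [k,zz']$, with projection $[k,z]\mapsto kK_{\Theta}$.

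With these structures the routine checks go as follows.
\begin{itemize}
\item $\Psi$ is well defined: $[kh, h^{-1}z]$ maps to $kzN_{\Theta}^{+}$ for $h\in K_{\Theta}\subset Z_{\Theta}$, using Proposition \ref{K stab}.
\item $\Psi$ is $Z_{\Theta}$-equivariant, by the definitions of the two actions.
\item $\Psi$ covers the identity of $\mathbb{F}_{\Theta}$, because $kzP_{\Theta}=kP_{\Theta}$ and $K/K_{\Theta}\to G/P_{\Theta}$, $kK_{\Theta}\mapsto kP_{\Theta}$, is the identification already used.
\item $\Psi$ is smooth, since it is induced from the smooth map $K\times Z_{\Theta}\to G/N_{\Theta}^{+}$, $(k,z)\mapsto kzN_{\Theta}^{+}$.
\end{itemize}

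A morphism of principal bundles with the same structure group that covers the identity of the base is automatically an isomorphism, since it is a bijection on each fiber and locally it is given by left multiplication by transition data. So $\Psi$ is an isomorphism once the above checks are done. For a more hands-on argument, we can also write the inverse explicitly. Given $g\in G$, the Iwasawa decomposition $G=KAN$, together with $A N\subset P_{\Theta}=Z_{\Theta}N_{\Theta}^{+}$, gives $G = KP_{\Theta} = KZ_{\Theta}N_{\Theta}^{+}$. So $g = kzn$, and we define $\Psi^{-1}(gN_{\Theta}^{+}) = [k,z]$.

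The main obstacle is showing this inverse is well defined and smooth. If $kzN_{\Theta}^{+} = k'z'N_{\Theta}^{+}$, then $k^{-1}k' = z (n) z'^{-1}\in K\cap P_{\Theta}=K_{\Theta}$. Writing $h=k^{-1}k'$, we get $zn = hz'$ with $hz'\in Z_{\Theta}$. Uniqueness in the semidirect product $P_{\Theta}=Z_{\Theta}\ltimes N_{\Theta}^{+}$ (that is, $Z_{\Theta}\cap N_{\Theta}^{+}=\{e\}$) then forces $n=e$ and $z=hz'$, so $[k',z']=[kh,h^{-1}z]=[k,z]$.

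Smoothness is local. Over an open set $U\subset\mathbb{F}_{\Theta}$ we take a local section $s\colon U\to K$ of $K\to K/K_{\Theta}$. Then $gN_{\Theta}^{+}\mapsto [s(gP_{\Theta}), z]$ with $z$ determined smoothly via the smooth projection $P_{\Theta}\to Z_{\Theta}$ applied to $s(gP_{\Theta})^{-1}g$. This is legitimate because $s(gP_{\Theta})^{-1}g\in P_{\Theta}$, and the resulting map is independent of the choice of representative $g$ modulo $N_{\Theta}^{+}$ by normality.
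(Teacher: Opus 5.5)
Your proposal is correct and follows the paper's proof. It uses the same map $[k,z]\mapsto kzN_{\Theta}^{+}$, gets surjectivity from $G=KZ_{\Theta}N_{\Theta}^{+}$, and gets injectivity from $k^{-1}k'\in K\cap P_{\Theta}=K_{\Theta}$; beyond that, you make explicit the use of $Z_{\Theta}\cap N_{\Theta}^{+}=\{e\}$ and the smoothness of the inverse, both of which the paper leaves implicit.
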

\begin{proof}
    Write $Z_{\Theta} = P_{\Theta}/N_{\Theta}^{+}$ and define \begin{align*}\psi \colon K \times_{K_{\Theta}} Z_{\Theta} &\longmapsto G/N_{\Theta}^{+} \\ [k, z] &\longmapsto kzN_{\Theta}^{+}  \end{align*}
    By the Iwasawa decomposition $G=KZ_{\Theta}N_{\Theta}^{+}$ whence $\psi$ is surjective. To see the injectivity, suppose that $\psi[k_{1}, z_{1}] = \psi[k_{2}, z_{2}]$, then \begin{align*}
        z_{2}^{-1}k_{2}^{-1}k_{1}z_{1} \in N_{\Theta}^{+}
    \end{align*}
    But since $N_{\Theta}^{+} \subset P_{\Theta}$ and $Z_{\Theta} \subset P_{\Theta}$ we have that $k_{0}:=k_{1}^{-1}k_{2} \in K \cap P_{\Theta} = K_{\Theta}$ and then $k_{2} = k_{1}k_{0}$, so $z_{2}N_{\Theta}^{+} = k_{0}^{-1}z_{1}N_{\Theta}^{+}$, i.e., $[k_{1}, z_{1}] = [k_{2}, z_{2}]$. It is also immediate that $\psi$ preserves the actions on both sides. 
\end{proof}

 By Proposition \ref{lema 6} in Appendix I we have that $\omega_{K}$ induces a connection $\omega_{K}^{+}$ on the principal bundle $$Z_{\Theta} \to K \times_{K_{\Theta}} Z_{\Theta} \to \mathbb{F}_{\Theta}$$ and by Proposition \ref{prop 4} the form $\omega_{K}^{+}$ shall be regarded as a connection form on the principal bundle $Z_{\Theta} \to G/N_{\Theta}^{+} \to \mathbb{F}_{\Theta}$ which extends $\omega_{K}$ in the sense of Remark \ref{ext}. By definition, $\widetilde{\omega}$ takes values in $\mathfrak{z}_{\Theta}$ and thus its curvature $F_{K}^{+}$ also takes values in $\mathfrak{z}_{\Theta}$.

\begin{lema}The curvature $F_{K}^{+} \in \Omega^{2}(G/N_{\Theta}^{+}, \mathfrak{z}_{\Theta})$ of the extended connection $\omega_{K}^{+}$ takes values in $\mathfrak{m}_{\Theta}$.
\end{lema}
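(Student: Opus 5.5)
The plan is to compute the curvature of the extended connection $\omega_K^{+}$ directly from the curvature $F_K$ of $\omega_K$. Then we show that the values of $F_K$ already lie in $\mathfrak{k}_\Theta\subset\mathfrak{m}_\Theta$, and that this property survives the $\Ad(Z_\Theta)$-equivariant extension.

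\textbf{Step 1 (naturality of curvature under extension).} By Proposition \ref{lema 6} and Remark \ref{ext}, the extended connection is characterized by
\[
j^{\ast}\omega_K^{+}=\omega_K ,
\]
where $j\colon K\to K\times_{K_\Theta}Z_\Theta\cong G/N_\Theta^{+}$ is the map $k\mapsto [k,e]$, i.e.\ $k\mapsto kN_\Theta^{+}$. Since the structure equation is natural and $\mathfrak{k}_\Theta\subset\mathfrak{z}_\Theta$ is a subalgebra, we get
\[
j^{\ast}F_K^{+}=F_K .
\]
Every point of $G/N_\Theta^{+}$ has the form $[k,z]=R_z(j(k))$. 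Equivariance gives $R_z^{\ast}F_K^{+}=\Ad(z^{-1})F_K^{+}$, and $F_K^{+}$ is horizontal. Hence the values of $F_K^{+}$ at $[k,z]$ are exactly $\Ad(z^{-1})$ applied to the values of $F_K$ at $k$.

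\textbf{Step 2 ($F_K$ takes values in $\mathfrak{k}_\Theta$).} This is clear, since $\omega_K$ is $\mathfrak{k}_\Theta$-valued. It remains to check that $\mathfrak{k}_\Theta\subset\mathfrak{m}_\Theta$. Write $\mathfrak{k}_\Theta=\mathfrak{k}\cap\mathfrak{z}_\Theta$. This consists of $\mathfrak{m}$ together with the elements $X+\theta X$ for $X\in\mathfrak{g}_\alpha$, $\alpha\in\langle\Theta\rangle$. Both kinds of elements lie in $\mathfrak{m}_\Theta$ by the definition of $\mathfrak{m}_\Theta$, because $\theta\mathfrak{g}_\alpha=\mathfrak{g}_{-\alpha}$ and $-\alpha\in\langle\Theta\rangle$.

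\textbf{Step 3 ($\mathfrak{m}_\Theta$ is $\Ad(Z_\Theta)$-invariant).} This is the step I expect to be the main obstacle, since $Z_\Theta$ need not be connected. We use the Langlands decomposition $Z_\Theta=M_\Theta A_\Theta$.
\begin{itemize}
\item For $a\in A_\Theta=\exp\mathfrak{a}_\Theta$, Lemma \ref{ma} gives $\Ad(a)=e^{\mathrm{ad}}=\mathrm{id}$ on $\mathfrak{m}_\Theta$.
\item For $m\in M_\Theta$, note that $\mathfrak{m}_\Theta$ is an ideal in $\mathfrak{z}_\Theta$: it is the orthogonal complement of the center $\mathfrak{a}_\Theta$ with respect to the Killing form. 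Since $\Ad(m)$ preserves $\mathfrak{z}_\Theta$, fixes $\mathfrak{a}_\Theta$ pointwise and preserves the Killing form, it also preserves $\mathfrak{a}_\Theta^{\perp}\cap\mathfrak{z}_\Theta=\mathfrak{m}_\Theta$.
\end{itemize}
If one prefers to avoid the structure of $M_\Theta$, this argument applies directly to any $z\in Z_\Theta$, provided one checks that $\Ad(z)$ fixes $\mathfrak{a}_\Theta$ pointwise. I would verify this using that $\mathfrak{a}_\Theta$ is the center of $\mathfrak{z}_\Theta$ intersected with $\mathfrak{p}$, together with the decomposition $Z_\Theta=K_\Theta\exp(\mathfrak{z}_\Theta\cap\mathfrak{p})$. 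Here $K_\Theta$ centralizes $H_\Theta$, and in fact centralizes all of $\mathfrak{a}_\Theta$, since $\mathfrak{a}_\Theta$ is spanned by characteristic elements of $\Theta$.

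Combining Steps 1–3, every value of $F_K^{+}$ is of the form $\Ad(z^{-1})$ applied to an element of $\mathfrak{k}_\Theta\subset\mathfrak{m}_\Theta$. Hence it lies in $\mathfrak{m}_\Theta$, which proves the lemma.
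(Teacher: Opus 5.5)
Your proof is correct and follows the same route as the paper. The paper also combines the formula $(F_K^{+})_{[k,z]}=\Ad(z^{-1})(F_K)_k$ (Theorem \ref{most}), the inclusion $\mathfrak{k}_\Theta\subset\mathfrak{m}_\Theta$ (proved there via the Cartan involution instead of your root-space description), and $Z_\Theta=M_\Theta A_\Theta$ with Lemma \ref{ma}; you also make explicit the $\Ad(Z_\Theta)$-stability of $\mathfrak{m}_\Theta$, which the paper leaves implicit.

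One small correction: $\mathfrak{a}_\Theta$ is central in $\mathfrak{z}_\Theta$ but need not be its whole center (e.g.\ for $\Theta=\emptyset$ whenever $\mathfrak{m}$ has nonzero center). This does not affect your argument, which only uses the true identity $\mathfrak{m}_\Theta=\mathfrak{z}_\Theta\cap\mathfrak{a}_\Theta^{\perp}$.
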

\begin{proof}

For $kz \in K \times_{K_{\Theta}} Z_{\Theta} \cong G/N_{\Theta}^+$ we have (cf. Thm \ref{most})  $$(F_{K}^{+})_{[k,z]} = \Ad(z^{-1}) (F_{K})_{k}$$ and since $Z_{\Theta} = M_{\Theta}A_{\Theta}$ it suffices to show that  $\Ad(M_{\Theta})\mathfrak{k}_{\Theta} \subset \mathfrak{m}_{\Theta}.$
If $\mathfrak{k}_{\Theta} \subset \mathfrak{m}_{\Theta}$, then the proof shall be concluded. To see this, let $X \in \mathfrak{k}_{\Theta} = \mathfrak{k} \cap \mathfrak{z}_{\Theta}$ (cf. Prop. \ref{K stab}), and write $X = Y+Z$, with $Y \in \mathfrak{m}_{\Theta}$ and $Z \in \mathfrak{a}_{\Theta} \subset \mathfrak{s}$. Apply the Cartan involution $\theta X = \theta Y-Z$. Since $\theta(\mathfrak{m}_{\Theta}) \subset \mathfrak{m}_{\Theta}$ we have that $\theta Y \in \mathfrak{m}_{\Theta}$ and since $X \in \mathfrak{k}$ we have $\theta X = X$; therefore $X = Y \in \mathfrak{m}_{\Theta}$.
\end{proof}
\begin{obs} Since $\mathfrak{m}_{\Theta} \cap \mathfrak{a}_{\Theta} = \{0\}$ we have that the curvature in the $\mathfrak{a}_{\Theta}$-direction vanishes. We note that $H_{\Theta} \in \mathfrak{a}_{\Theta}$.
\end{obs}
\begin{lema}
    We have a principal bundle $$AN^{\Theta} \to G/N_{\Theta}^{+} \to K.$$ 
\end{lema}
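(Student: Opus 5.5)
The plan is to exhibit the bundle as globally trivial by means of the Iwasawa decomposition $G = KAN$ together with a splitting of the nilpotent part $N = N^{\Theta}N_{\Theta}^{+}$. Define the action and the projection explicitly, then check that the structure group acts freely with orbits equal to the fibres, and that the resulting map from $K \times AN^{\Theta}$ is a diffeomorphism.

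First I would verify the group-theoretic facts on which everything rests.
\begin{enumerate}
\item $\mathfrak{n} = \mathfrak{n}^{\Theta} \oplus \mathfrak{n}_{\Theta}^{+}$, where $\mathfrak{n}^{\Theta}$ is a subalgebra and $\mathfrak{n}_{\Theta}^{+}$ is an ideal of $\mathfrak{n}$. The ideal property holds because if $\alpha \in \Delta^{+}\setminus\langle\Theta\rangle$, $\beta \in \Delta^{+}$ and $\alpha+\beta$ is a root, then $\alpha+\beta \notin \langle\Theta\rangle$.
\item Since $N$ is simply connected nilpotent, this gives a diffeomorphism $N^{\Theta}\times N_{\Theta}^{+} \to N$, $(n_1,n_2)\mapsto n_1n_2$, with $N_{\Theta}^{+}$ normal in $N$.
\item $A$ normalizes both $N^{\Theta}$ and $N_{\Theta}^{+}$, since $\mathrm{ad}(\mathfrak{a})$ preserves every root space. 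Hence $AN^{\Theta}$ is a closed subgroup of $AN$.
\item $AN^{\Theta}$ normalizes $N_{\Theta}^{+}$. This follows because $AN^{\Theta}\subset P_{\Theta}$ and $N_{\Theta}^{+}$ is normal in $P_{\Theta}$, and it also follows from the previous two items.
\end{enumerate}

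Next I would define the right action of $AN^{\Theta}$ on $G/N_{\Theta}^{+}$ by $gN_{\Theta}^{+}\cdot h := ghN_{\Theta}^{+}$. This is well defined because $h^{-1}N_{\Theta}^{+}h = N_{\Theta}^{+}$. It is free: if $ghN_{\Theta}^{+} = gN_{\Theta}^{+}$, then $h\in AN^{\Theta}\cap N_{\Theta}^{+}$. This intersection is trivial, because $A\cap N=\{e\}$ and $N^{\Theta}\cap N_{\Theta}^{+}=\{e\}$ by the uniqueness in the product decomposition of $N$.

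For the projection, write $g = k(g)a(g)n(g)$ and set $p(gN_{\Theta}^{+}) := k(g)\in K$. This is well defined, since right multiplication by $N_{\Theta}^{+}\subset N$ does not change the $K$-component. It is also constant on $AN^{\Theta}$-orbits, since $gh = k(g)\,(a(g)n(g)h)$ with $a(g)n(g)h \in AN$.

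Finally, for local triviality I would show that
\[
\Psi\colon K\times AN^{\Theta}\to G/N_{\Theta}^{+},\qquad (k,h)\mapsto khN_{\Theta}^{+},
\]
is an $AN^{\Theta}$-equivariant diffeomorphism covering the identity of $K$. Surjectivity and injectivity both follow from $g = k\,a\,n_1n_2$ with $n_1\in N^{\Theta}$ and $n_2\in N_{\Theta}^{+}$, which gives $gN_{\Theta}^{+} = \Psi(k,an_1)$. Smoothness of the inverse comes from the Iwasawa diffeomorphism $K\times A\times N\to G$ composed with the diffeomorphism $N\cong N^{\Theta}\times N_{\Theta}^{+}$, which passes to the quotient by $N_{\Theta}^{+}$.

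Thus the bundle $AN^{\Theta}\to G/N_{\Theta}^{+}\to K$ is in fact trivial. The step I expect to need the most care is the splitting $N\cong N^{\Theta}\times N_{\Theta}^{+}$ as manifolds, with $N_{\Theta}^{+}$ normal. I would justify it via the exponential map of the simply connected nilpotent group $N$ and the root-sum argument above.
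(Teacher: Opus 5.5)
Your proof is correct, but it takes a different route from the paper's.

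The paper does not construct a trivialization. It invokes the general fact that for closed subgroups $H_1\subset H_2\subset G$ with $H_1$ normal in $H_2$, the fibration $H_2/H_1\to G/H_1\to G/H_2$ is a principal bundle. It applies this with $H_2=AN$ and $H_1=N_{\Theta}^{+}$. The only thing it checks is that $N_{\Theta}^{+}$ is normal in $AN$, using the same root-sum argument as your first item. It then simply asserts $G/AN\cong K$ and $AN/N_{\Theta}^{+}\cong AN^{\Theta}$.

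You instead replace the general theorem by the explicit $AN^{\Theta}$-equivariant diffeomorphism $\Psi\colon K\times AN^{\Theta}\to G/N_{\Theta}^{+}$. You build it from the Iwasawa diffeomorphism and the splitting $N\cong N^{\Theta}\times N_{\Theta}^{+}$. Your action $gN_{\Theta}^{+}\cdot h=ghN_{\Theta}^{+}$ is the same one the quotient construction produces.

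What your route costs is a little more bookkeeping: freeness, the semidirect splitting of $N$, and descending the inverse to the quotient. What it buys:
\begin{itemize}
\item It proves the identification $AN/N_{\Theta}^{+}\cong AN^{\Theta}$ that the paper leaves implicit.
\item It shows the bundle is globally trivial, which the paper never states.
\item It gives the correct reason for the $\mathfrak{a}$-part of the ideal condition, namely that $\mathrm{ad}(\mathfrak{a})$ preserves each root space. The paper instead cites "$\mathfrak{a}$ is abelian," which is not the relevant fact.
\end{itemize}

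The paper's version is shorter, and the structure group appears naturally as the quotient $AN/N_{\Theta}^{+}$.
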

\begin{proof}
    The proof relies in the fundamental fact: If $H_{1} \subset H_{2} \subset G$ are closed subgroups of a Lie group $G$ and $H_{1}$ is normal in $H_{2}$, then $$H_{2}/H_{1} \to G/H_{1} \to  G/H_{2}$$ is a principal fibre bundle (see \cite{san2021lie}, Proposition 13.26). We set $H_{2}=AN$ and $H_{1} = N_{\Theta}^{+}$. Let's prove that $N_{\Theta}^{+}$ is normal in $AN^{+}$. Since $AN^{+}$ and $N_{\Theta}^{+}$  are connected, it suffices to show that $\mathfrak{n}_{\Theta}^{+}$ is an ideal in $\mathfrak{a} \oplus \mathfrak{n}^{+}$, and since $\mathfrak{a}$ is abelian, it suffices to show that $\mathfrak{n}_{\Theta}^{+}$ is an ideal in $\mathfrak{n}^{+}$. Let us write $$[\mathfrak{n}_{\Theta}^{+}, \mathfrak{n}^{+}]=\Big[\sum_{\alpha \in \Delta^{+} \setminus \langle \Theta \rangle} \mathfrak{g}_{\alpha}, \sum_{\beta \in \Delta^{+}} \mathfrak{g}_{\beta} \Big] \subset \sum_{\alpha, \beta} \mathfrak{g}_{\alpha+\beta}$$ so that we have to show $\alpha + \beta \notin \langle \Theta \rangle $ for $\alpha \in \Delta^{+} \setminus \langle \Theta \rangle$ and $\beta \in \Delta^{+}$. As a linear combination of simple roots, both $\alpha$ and $\beta$ have non-negative integer coefficients, so that $\alpha = (\hdots) + a_{i}\alpha_{i}$ where $\alpha_{i} \in \Sigma \setminus \Theta$ and $a_{i} >0$. But then, $\alpha + \beta = (\hdots) + (a_{i}+b_{i})\alpha_{i}$ where $a_{i}+b_{i} >0$ (because $b_{i} \geq 0)$, whence $\alpha + \beta \notin \langle \Theta \rangle.$ 
    Now, since $AN^{\Theta} \cong AN/N_{\Theta}^{+}$ and $G/AN \cong K$ we have the result. 
\end{proof}
Unlike the curvature, the extended connection $\omega_{K}^{+}$ has a $\mathfrak{a}_{\Theta}$ component as described in the following Proposition. 
\begin{prop} \label{ma invariant}
     For any $U \in \mathfrak{g}$ let $\widetilde{U} \in \mathfrak{X}(G/N_{\Theta}^{+})$ be the induced vector field, and denote $x_{0}:=eN_{\Theta}^{+}$ the origin of $G/N_{\Theta}^{+}$. Then, if $g = khn \in KAN$ we have $$\lambda_{H_{\Theta}}(\omega_{K}^{+}(\widetilde{U}(gx_{0}))) = \langle \Ad(k)H_{\Theta}, U \rangle.$$
\end{prop}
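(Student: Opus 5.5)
The plan is to compute $\omega_{K}^{+}$ explicitly through the isomorphism $\psi$ of Proposition \ref{prop 4}, by differentiating a curve written in Iwasawa coordinates. The first step is to rewrite the base point. Write $g = khn$ with $k\in K$, $h\in A$, $n\in N$. Since $\mathfrak{n}^{+} = \mathfrak{n}^{\Theta}\oplus\mathfrak{n}_{\Theta}^{+}$ and $N_{\Theta}^{+}$ is normal in $AN$ (previous lemma), we can factor $n = n^{\Theta}n_{\Theta}$ with $n^{\Theta}\in N^{\Theta}\subset Z_{\Theta}$. Then
\[
gx_{0} = k z N_{\Theta}^{+} = \psi[k,z], \qquad z := hn^{\Theta}\in Z_{\Theta}.
\]

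Next I would use the description of the induced connection from Appendix I (Proposition \ref{lema 6}, Thm \ref{most}). For a curve $t\mapsto [k(t),z(t)]$ with $k'(0)=k\cdot X$ and $z'(0)=W z$, it should read
\[
\omega_{K}^{+}\bigl(\tfrac{d}{dt}[k(t),z(t)]\bigr) = \Ad(z^{-1})\bigl(\proj_{\mathfrak{k}_{\Theta}}X + W\bigr),
\]
that is, $\Ad(z^{-1})\omega_{K} + z^{-1}dz$ written in the appropriate trivialization. I would check that this is consistent with the formula $(F_{K}^{+})_{[k,z]}=\Ad(z^{-1})(F_{K})_{k}$ already used.

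Then I would decompose the curve $\exp(tU)g$. Write $\exp(tU)k = k(t)h(t)n(t)$ in Iwasawa coordinates, with $k(0)=k$, $h(0)=e$, $n(0)=e$. Differentiating at $t=0$ gives the Lie algebra Iwasawa decomposition
\[
\Ad(k^{-1})U = X_{\mathfrak{k}} + X_{\mathfrak{a}} + X_{\mathfrak{n}}.
\]
Hence $\exp(tU)gx_{0} = \psi[k(t), z(t)]$ with $z(t) = h(t)n^{\Theta}(t)z$, where $n^{\Theta}(t)$ is the $N^{\Theta}$-part of $n(t)$ modulo $N_{\Theta}^{+}$, again using normality of $N_{\Theta}^{+}$. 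This gives $k'(0) = k\cdot X_{\mathfrak{k}}$ and $z'(0) = (X_{\mathfrak{a}} + \proj_{\mathfrak{n}^{\Theta}}X_{\mathfrak{n}})z$. Therefore
\[
\omega_{K}^{+}(\widetilde{U}(gx_{0})) = \Ad(z^{-1})\bigl(\proj_{\mathfrak{k}_{\Theta}}X_{\mathfrak{k}} + X_{\mathfrak{a}} + \proj_{\mathfrak{n}^{\Theta}}X_{\mathfrak{n}}\bigr).
\]

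Finally I would apply $\lambda_{H_{\Theta}}$. By $\Ad(Z_{\Theta})$-invariance, the factor $\Ad(z^{-1})$ disappears. The Killing form pairs $H_{\Theta}\in\mathfrak{a}\subset\mathfrak{s}$ trivially with $\mathfrak{k}$, and $\mathfrak{a}\perp\mathfrak{n}$, so only $\langle H_{\Theta},X_{\mathfrak{a}}\rangle$ survives. For the same reasons $\langle H_{\Theta},X_{\mathfrak{a}}\rangle = \langle H_{\Theta},\Ad(k^{-1})U\rangle = \langle \Ad(k)H_{\Theta},U\rangle$, which is the claim.

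I expect the main obstacle to be the second step: pinning down the explicit formula for $\omega_{K}^{+}$ on $K\times_{K_{\Theta}}Z_{\Theta}$. One has to check that it is well defined on equivalence classes $[kk_{0},k_{0}^{-1}z]$ and that it agrees with the appendix construction under $\psi$. Once that formula is in place, the remaining steps are routine bookkeeping with the Iwasawa decomposition.
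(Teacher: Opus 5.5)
Your proof is correct. It uses the same ingredients as the paper but organizes them differently.

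The paper computes only at the origin, where $\widetilde{U}(x_{0})=U_{\mathfrak{k}}+U_{\mathfrak{a}}+U_{\mathfrak{n}^{\Theta}}$ and $\omega_{K}^{+}(\widetilde{U}(x_{0}))=(U_{\mathfrak{k}})_{\mathfrak{k}_{\Theta}}+U_{\mathfrak{a}}+U_{\mathfrak{n}^{\Theta}}$. It then transports the identity by symmetry in two steps:
\begin{enumerate}[(i)]
\item along the fibre $AN^{\Theta}x_{0}$, using $\widetilde{U}(zx_{0})=(R_{z})_{\ast}\widetilde{U}(x_{0})$, the equivariance $R_{z}^{\ast}\omega_{K}^{+}=\Ad(z^{-1})\omega_{K}^{+}$, and $\Ad(Z_{\Theta})$-invariance of $\lambda_{H_{\Theta}}$;
\item to arbitrary points, using $\widetilde{U}(kx)=(L_{k})_{\ast}\widetilde{W}(x)$ with $W=\Ad(k^{-1})U$ and the left $K$-invariance of $\omega_{K}^{+}$.
\end{enumerate}

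You instead evaluate $\Ad(z^{-1})\omega_{K}+z^{-1}dz$ directly at an arbitrary $[k,z]$. There, $\Ad(k^{-1})U$ appears from differentiating the Iwasawa factorization of $\exp(tU)k$ rather than from $K$-equivariance. In effect, your single computation is the paper's origin computation with both transport steps built in.

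What your route buys:
\begin{itemize}
\item You never need the left $K$-invariance of $\omega_{K}^{+}$ on $G/N_{\Theta}^{+}$, which the paper uses without comment.
\item You obtain the full $\mathfrak{z}_{\Theta}$-valued expression for $\omega_{K}^{+}(\widetilde{U}(gx_{0}))$ at every point, not just its pairing with $H_{\Theta}$.
\end{itemize}
The price is the bookkeeping with the curves $k(t),h(t),n(t)$ and $n^{\Theta}(t)$, and using normality of $N_{\Theta}^{+}$ in $AN$ to discard the $N_{\Theta}^{+}$-factors.

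The step you flag as the main obstacle is already settled in Appendix I. Your formula is exactly the form $\widetilde{\omega}$ defined there, with $Q=K$ and $H=K_{\Theta}$, pushed down by $\pi_{H}$. The lemma there showing $\widetilde{\omega}$ is horizontal and $K_{\Theta}$-invariant is precisely well-definedness on classes $[kk_{0},k_{0}^{-1}z]$. Agreement with the connection on $G/N_{\Theta}^{+}$ holds by definition, since that connection is transported through $\psi$.
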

\begin{proof}
First we prove the formula at the origin $x_{0} = eN_{\Theta}^{+}$. To do so, decompose the Lie algebra $\g$ as $$\g = \mathfrak{k} \oplus \mathfrak{a} \oplus \mathfrak{n}^{\Theta} \oplus \mathfrak{n}^{+}_{\Theta}$$ and write $U = U_{\mathfrak{k}} + U_{\mathfrak{a}} + U_{\mathfrak{n}^{\Theta}} + U_{\mathfrak{n}_{\Theta}^{+}}$. It is clear that $\widetilde{U}(x_{0}) = U_{\mathfrak{k}} + U_{\mathfrak{a}} + U_{\mathfrak{n}^{\Theta}}$ and $$\omega_{K}^{+}(\widetilde{U}(x_{0})) = (\omega_{K})_{e}(U_{\mathfrak{k}}) + \theta_{e}(U_{\mathfrak{a}} + U_{\mathfrak{n}^{\Theta}}) = (U_{\mathfrak{k}})_{\mathfrak{k}_{\Theta}} + U_{\mathfrak{a}} + U_{\mathfrak{n}^{\Theta}}.$$
Now, since $\mathfrak{k}_{\Theta}$ and $\mathfrak{n}^{\Theta}$ are orthogonal to $\mathfrak{a}_{\Theta}$ we have $$\lambda_{H_{\Theta}}(\omega^{+}_{K}(\widetilde{U}(x_{0}))) = \langle H_{\Theta}, U_{\mathfrak{a}} \rangle = \langle H_{\Theta}, U \rangle .$$ This is the desired formula over the origin. We now proceed to compute the formula on points of the form $z x_{0}$, with $z \in AN^{\Theta} \subset Z_{\Theta}$ that is, in the fiber over the origin. Note that $\widetilde{U}(zx_{0}) = (R_{z})_{\ast} \widetilde{U}(x_{0})$ whence $$\omega_{K}^{+}(\widetilde{U}(zx_{0})) = \omega_{K}^{+}( (R_{z})_{\ast} \widetilde{U}(x_{0})) = \Ad(z^{-1}) \omega_{K}^{+}(\widetilde{U}(x_{0})),$$ and since $\lambda_{H_{\Theta}}$ is $Z_{\Theta}$-invariant, we have $$\lambda_{H_{\Theta}}(\omega_{K}^{+}(\widetilde{U}(zx_{0}))) = \lambda_{H_{\Theta}}(\omega_{K}^{+}(\widetilde{U}(x_{0}))) = \langle H_{\Theta}, U \rangle.$$ This proves the desired formula for any point on the fibre over the origin. To finish we must check the formula for points of the form $k x$,  where $k \in K$ and $x$ is in the fibre over the origin. Note that $$\widetilde{U}(kx) = (L_{k})_{\ast} \widetilde{W}(x), \qquad W = \Ad(k^{-1}) U,$$ whence since $\omega_{K}^{+}$ is $K$ left-invariant we obtain $$\lambda_{H_{\Theta}}(\omega_{K}^{+}(\widetilde{U}(kx))) = \langle H_{\Theta}, \omega_{K}^{+}((L_{k})_{\ast} \widetilde{W}(x)) \rangle = \langle H_{\Theta}, W \rangle = \langle H_{\Theta}, \Ad(k^{-1})U \rangle= \langle \Ad(k)H_{\Theta}, U \rangle.$$
\end{proof}
\section{Specializing to the partial flag $\mathbb{F}_{\Theta} \times \mathbb{F}_{\Theta}^{\ast}$}
From now on we shall identify the dual flag (see Appendix II) $$\mathbb{F}_{\Theta}^{\ast}  = G/P_{\Theta}^{-} = G/Z_{\Theta}N_{\Theta}^{-}.$$ Let us consider now the identification \begin{align*} (G \times G)/(N_{\Theta}^{+} \times N_{\Theta}^{-}) & \stackrel{\cong}\longrightarrow G/N_{\Theta}^{+} \times G/N_{\Theta}^{-} \\ (g_{1}, g_{2}) \cdot (N_{\Theta}^{+} \times N_{\Theta}^{-}) & \longmapsto (g_{1}N_{\Theta}^{+}, g_{2} N_{\Theta}^{-})  \end{align*} so that $Z_{\Theta} \times Z_{\Theta}$ acts on $(G \times G)/(N_{\Theta}^{+} \times N_{\Theta}^{-})$ naturally on the right  $$(z_{1}, z_{2}) \cdot [(g_{1}, g_{2}) \cdot (N_{\Theta}^{+} \times N_{\Theta}^{-}) ] := (g_{1}z_{1}, g_{2}z_{2}) \cdot (N_{\Theta}^{+} \times N_{\Theta}^{-}),$$ and we have a principal bundle 
 \begin{equation} \label{principal} Z_{\Theta} \times Z_{\Theta} \to (G \times G)/(N_{\Theta}^{+} \times N_{\Theta}^{-}) \to \mathbb{F}_{\Theta} \times \mathbb{F}_{\Theta}^{\ast}.\end{equation} Observe that  $G$ acts diagonally on $(G \times G)/(N_{\Theta}^{+} \times N_{\Theta}^{-}) \cong G/N_{\Theta}^{+} \times G/N_{\Theta}^{-}$ and the orbit through $\xi = (eN_{\Theta}^{+}, eN_{\Theta}^{-})$ is diffeomorphic to $G$, invariant by the right action of $\Delta(Z_{\Theta})=\{(z, z): z \in Z_{\Theta}\} \cong Z_{\Theta}$ and projects onto the open orbit $G \cdot (o, o_{w}) \subset \mathbb{F}_{\Theta} \times \mathbb{F}_{\Theta}^{\ast}$. Hence, the restriction of (\ref{principal}) to the orbit $G \cdot \xi $ is isomorphic to the principal bundle $  Z_{\Theta} \to G \to G/Z_{\Theta}$. \\ 
 The Iwasawa decompositions of $G = KAN^{+} = KAN^{-}$ induces a Iwasawa decomposition of $G \times G$ $$G \times G = (K \times K) \cdot (A \times A) \cdot (N^{+} \times N^{-}).$$
 Thus we can consider as the  $K \times K$-invariant connection $$\omega_{K \times K} \in \Omega^{1}(K \times K, \mathfrak{k}_{\Theta} \times \mathfrak{k}_{\Theta})$$ on the principal bundle $K_{\Theta} \times K_{\Theta} \to K \times K \to \mathbb{F}_{\Theta} \times \mathbb{F}_{\Theta}^{\ast}$. 
\\ 
Analogously as before, the $K$-invariant connection $\omega_{K} \in \Omega^{1}(K, \mathfrak{k}_{\Theta})$ can be extended to a connection $\omega_{K}^{-} \in \Omega^{1}(G/N_{\Theta}^{-}, \mathfrak{z}_{\Theta})$ defined on the principal bundle $$Z_{\Theta} \to G/N_{\Theta}^{-} \to \mathbb{F}_{\Theta}^{\ast}.$$ The following proposition is straightforward by our construction of the extended connection (Remark \ref{ext}).

\begin{prop}
The connection $\eta \in \Omega^{1}(G/N_{\Theta}^{+} \times G/N_{\Theta}^{-}, \mathfrak{z}_{\Theta} \times \mathfrak{z}_{\Theta})$ given by 
    $$ \eta(u, v) = (\omega_{K}^{+}(u), \omega_{K}^{-}(v)), \qquad u, v \in T(G/N_{\Theta}^{+} \times G/N_{\Theta}^{-}), $$ is an extension of the $K \times K$--invariant connection $\omega_{K \times K} \in \Omega^{1}(K \times K, \mathfrak{k}_{\Theta} \times \mathfrak{k}_{\Theta})$.
\end{prop}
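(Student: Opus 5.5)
The statement has two parts. First, $\eta$ is a connection on the product principal bundle (\ref{principal}). Second, its restriction to the embedded copy of $K\times K$ is $\omega_{K\times K}$. I would prove both componentwise, using that everything splits as a product.

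Step one: identify the embedding of $K\times K$. Under the identification $(G\times G)/(N_{\Theta}^{+}\times N_{\Theta}^{-}) \cong G/N_{\Theta}^{+}\times G/N_{\Theta}^{-}$, the inclusion is $(k_1,k_2)\mapsto (k_1N_{\Theta}^{+}, k_2N_{\Theta}^{-})$. This is the product of the two maps $K\to G/N_{\Theta}^{\pm}$. Through Proposition \ref{prop 4}, each of these is the map $k\mapsto [k,e]$ into $K\times_{K_\Theta}Z_\Theta$. For the minus side I would state the analogue of Proposition \ref{prop 4}, using $G=KZ_\Theta N_\Theta^{-}$ and $K\cap P_\Theta^{-}=K_\Theta$. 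The latter follows exactly as in Proposition \ref{K stab}, by applying $\theta$.

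Step two: the connection property. Write $p_1,p_2$ for the projections of $G/N_{\Theta}^{+}\times G/N_{\Theta}^{-}$ onto its factors. Then $\eta = (p_1^{\ast}\omega_K^{+}, p_2^{\ast}\omega_K^{-})$.
\begin{enumerate}[(a)]
\item For the fundamental fields of $(A,B)\in\mathfrak{z}_\Theta\times\mathfrak{z}_\Theta$: the infinitesimal action of $(A,B)$ is the pair of the infinitesimal actions of $A$ on the first factor and $B$ on the second. Since $\omega_K^{\pm}$ reproduces fundamental fields, $\eta$ does too.
\item Equivariance under $R_{(z_1,z_2)}$ holds because right translation acts factorwise. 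The two components give $\Ad(z_1^{-1})\omega_K^{+}$ and $\Ad(z_2^{-1})\omega_K^{-}$, which together form $\Ad((z_1,z_2)^{-1})\eta$.
\end{enumerate}

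Step three: the extension property. By Remark \ref{ext} and Proposition \ref{lema 6}, the pullback of $\omega_K^{\pm}$ along $k\mapsto[k,e]$ is $\omega_K$. Pulling $\eta$ back along $\iota=\iota_+\times\iota_-$ therefore gives $(\mathrm{pr}_1^{\ast}\omega_K,\mathrm{pr}_2^{\ast}\omega_K)$. This is the product connection. Since $\omega_{K\times K}$ is the canonical $K\times K$-invariant connection, it is the projection of the Maurer--Cartan form of $\mathfrak{k}\times\mathfrak{k}$ onto $\mathfrak{k}_\Theta\times\mathfrak{k}_\Theta$ along $\mathfrak{k}_\Theta^{\perp}\times\mathfrak{k}_\Theta^{\perp}$. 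That is precisely $(\mathrm{pr}_1^{\ast}\omega_K,\mathrm{pr}_2^{\ast}\omega_K)$.

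The only point needing care is the minus side. The flag $\mathbb{F}_\Theta^{\ast}=G/P_\Theta^{-}$ is realized as $K/K_\Theta$ via the Iwasawa decomposition $G=KAN^{-}$. I would check that $\omega_K^{-}$ is built from the same $\omega_K$ on $K\to K/K_\Theta$. This comes down to $K\cap P_\Theta^{-}=K_\Theta$, which holds by the Cartan-involution argument above. Once that is settled, the rest is formal.
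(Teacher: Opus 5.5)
Your proposal is correct and takes the same route as the paper, which gives no separate argument and only says the claim is immediate from the construction in Remark \ref{ext}. Your componentwise check is exactly the unpacking of that remark: the minus-side analogue of Proposition \ref{prop 4} via $K\cap P_{\Theta}^{-}=K_{\Theta}$, the fact that $\omega_{K}^{\pm}$ pulls back to $\omega_{K}$ along $k\mapsto [k,e]$, and the fact that a product of two connections is a connection on the product bundle.
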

Let us write $(x_{0}, y_{0}) = (eN_{\Theta}^{+}, eN_{\Theta}^{-})$ the origin of $G/N_{\Theta}^{+} \times G/N_{\Theta}^{-}$ and remember that the diagonal orbit $$G \cdot (x_{0}, y_{0}) = \{(gx_{0}, gy_{0}) : g \in G\} \subset G/N_{\Theta}^{+} \times G/N_{\Theta}^{-}$$ is diffeomorphic to $G$ as the stabilizer of the diagonal action is $N_{\Theta}^{+} \cap N_{\Theta}^{-} = \{e\}$. Let us denote this diffeomorphism by $\Phi \colon G \stackrel{\cong}{\to} G\cdot (x_{0}, y_{0})$. Now, for every $U \in \mathfrak{g}$ we have  $$\Phi_{\ast} U^{r}(g) = (\widetilde{U}(gx_{0}), \widetilde{U}(gy_{0}))$$ hence, by definition, \begin{equation} \label{extended} (\Phi^{\ast} \eta)(U^{r}(g)) = (\omega_{K}^{+}(\widetilde{U}(gx_{0})), \omega_{K}^{-}(\widetilde{U}(gy_{0})) ).\end{equation}
\\ Now for every $H_{1}, H_{2} \in \mathfrak{a}_{\Theta}$ we can consider the linear functional $\lambda_{H_{1}, H_{2}} \in (\mathfrak{z}_{\Theta} \times \mathfrak{z}_{\Theta})^{\ast}$  given by $$\lambda_{H_{1}, H_{2}}(u, v) = \langle H_{1}, u\rangle+ \langle H_{2}, v\rangle , \qquad u, v \in \mathfrak{z}_{\Theta}. $$
Since $Z_{\Theta} = Z_{G}(\mathfrak{a}_{\Theta})$, we have that $\lambda_{H_{1}, H_{2}}$ is ($Z_{\Theta} \times Z_{\Theta}$)--invariant. We then can consider the composite $\lambda_{H_{1}, H_{1}}(\Phi^{\ast} \eta) \in \Omega^{1}(G, \mathbb{R})$. Therefore, equation (\ref{extended}) together with Proposition \ref{ma invariant} yields  \begin{equation} \label{KK}
    \lambda_{H_{1}, H_{1}}(\Phi^{\ast} \eta  (U^{r}(g))) = \langle \Ad(k_{+})H_{1}, U\rangle + \langle \Ad(k_{-})H_{2}, U \rangle,
\end{equation} where $g = k_{+}a_{+}n_{+} \in KAN^{+}$ and $g = k_{-}a_{-}n_{-} \in KAN^{-}.$
\\ As proved in \cite{gasparim2017adjoint}, Theorem 2.1, the $G=KAN^{+}$ decomposition induces a $K$-equivariant fiberwise diffeomorphism \begin{equation} \label{iota}\iota \colon \underbrace{\Ad(G)H_{\Theta}}_{\bigcup_{k \in K} \Ad(k) (H_{\Theta}+\mathfrak{n}_{\Theta}^{+})} \owns \Ad(k)(H_{\Theta}+X) \longmapsto \langle \Ad(k)X, \cdot \rangle \in T^{\ast}_{ko}\mathbb{F}_{\Theta} \end{equation}
where $o = eK_{\Theta}$ is the origin of the flag $\mathbb{F}_{\Theta}$. Hence, we shall write a generic linear functional $\alpha \in T_{ko}^{\ast}\mathbb{F}_{\Theta}$ as $$\alpha = \Ad(k)(H_{\Theta} + X), \quad X \in \mathfrak{n }_{\Theta}^{+}.$$ \begin{obs} \label{impobs}
        Let $p \colon T^{\ast}M \to M$ be a cotangent bundle with $\Gamma$ the canonical symplectic form $\Gamma = -d\Lambda$ where $$\Lambda_{\alpha}(v) = \alpha (p_{\ast} v), \qquad v \in T_{\alpha} (T^{\ast}M)$$ is the Liouville form. Every diffeomorphism $g \colon M \to M$ lifts to a symplectomorphism $g^{\#} \colon (T^{\ast}M, \Gamma) \to (T^{\ast}M, \Gamma)$ given by $g^{\#}(\alpha) = \alpha \circ (dg^{-1})_{g(x)},  \alpha \in T_{x}^{\ast}M$. Thus, an action  $\psi \colon G \times M \to  M$ lifts to a symplectic action  $\psi^{\#} \colon G \times T^{\ast}M \to T^{\ast}M$, $$\psi^{\#}(g, \alpha) := \psi(g)^{\#}(\alpha).$$
        The infinitesimal generator of $\psi^{\#}$ is the usual cotangent lift of vector fields $ X  \in  \mathfrak{g} \to \hat{X}^{\#} \in \mathfrak{X}(T^{\ast} M)$, where $\hat{X} \in \mathfrak{X}(\mathbb{F}_{\Theta}) $ is vector field induced by the action $\psi$ of $G$ on $M$, that is, $\widehat{X}(x) = d(\psi_{x})_{e}X$. Now, it is known that \begin{equation} \label{ham} \widehat{X}^{\#} = \ham (\Gamma, h_{X})\end{equation} where $h_{X}(\alpha) = \alpha (X_{x}), \alpha \in T_{x}^{\ast} M$. Hence, the moment map $\mu \colon T^{\ast}M \to \mathfrak{g}^{\ast}$ is given by \begin{align} \label{liouvillemoment}\mu(\alpha)(X) = h_{X}(\alpha) = \alpha (\pi_{\ast} \widehat{X}^{\#}(\alpha)) = \Lambda(\widehat{X}^{\#})(\alpha).\end{align}
        Moreover, since $\widehat{X}(x) = d(\psi_{x})_{e}X$, we can also write \begin{equation} \label{dpsi} \mu(\alpha) = \alpha \circ d(\psi_{x})_{e} \iff \mu = (d(\psi_{\pi (\cdot)}))_{e}^{\ast}  \iff \mu \vert_{T_{x}M^{\ast}} = (d(\psi_{x})_{e})^{\ast}.\end{equation}
        \end{obs}
        Now, $G$ acts on $\mathbb{F}_{\Theta}$ by left translations, and such action (by Remark \ref{impobs}) lifts to a symplectic action of $G$ on $T^{\ast}\mathbb{F}_{\Theta}$. The moment map of this action shall be described in the following Proposition. 
\begin{prop} \label{mom}
The moment $\mu \colon T^{\ast} \mathbb{F}_{\Theta} \to \mathfrak{g}^{\ast}$ of the symplectic action is $$\mu(\alpha)(U) = \langle \Ad(k)X, U\rangle, \qquad \alpha = \Ad(k)(H_{\Theta}+X), U \in \mathfrak{g}.$$
\end{prop}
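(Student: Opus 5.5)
We use formula (\ref{dpsi}) of Remark \ref{impobs}: on the fibre $T^{\ast}_{x}\mathbb{F}_{\Theta}$ the moment map is $\mu(\alpha) = \alpha\circ d(\psi_{x})_{e}$, where $\psi_{x}\colon G\to\mathbb{F}_{\Theta}$, $g\mapsto gx$. We therefore only need to compute $\alpha(\widehat{U}(x))$ for $x=ko$, with $\alpha$ written through the identification $\iota$ of (\ref{iota}).

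First we treat the origin, $k=e$ and $x=o$. We identify $T_{o}\mathbb{F}_{\Theta}\cong\mathfrak{g}/\mathfrak{p}_{\Theta}\cong\mathfrak{n}_{\Theta}^{-}$. The vector field induced by $U$ at $o$ is then the class of $U$ modulo $\mathfrak{p}_{\Theta}$, that is, $\widehat{U}(o)=U_{\mathfrak{n}_{\Theta}^{-}}$. By (\ref{iota}), $\alpha=H_{\Theta}+X$ with $X\in\mathfrak{n}_{\Theta}^{+}$ corresponds to the covector $\langle X,\cdot\rangle$ on $T_{o}\mathbb{F}_{\Theta}$. This is well defined on $\mathfrak{g}/\mathfrak{p}_{\Theta}$ because $\mathfrak{p}_{\Theta}=\mathfrak{z}_{\Theta}\oplus\mathfrak{n}_{\Theta}^{+}$ is Killing-orthogonal to $\mathfrak{n}_{\Theta}^{+}$. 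Indeed, $\langle\mathfrak{g}_{\alpha},\mathfrak{g}_{\beta}\rangle=0$ unless $\alpha+\beta=0$, so $\mathfrak{n}_{\Theta}^{+}$ pairs nontrivially only with $\mathfrak{n}_{\Theta}^{-}$. Hence
\[
\mu(H_{\Theta}+X)(U)=\langle X,U_{\mathfrak{n}_{\Theta}^{-}}\rangle=\langle X,U\rangle .
\]
The decomposition $\mathfrak{g}=\mathfrak{n}_{\Theta}^{-}\oplus\mathfrak{z}_{\Theta}\oplus\mathfrak{n}_{\Theta}^{+}$ and the same orthogonality justify the last equality.

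Next we pass to a general point $\alpha=\Ad(k)(H_{\Theta}+X)\in T^{\ast}_{ko}\mathbb{F}_{\Theta}$ by $K$-equivariance. The map $\iota$ is $K$-equivariant, where $K$ acts on $T^{\ast}\mathbb{F}_{\Theta}$ by the lifted action $\psi^{\#}$. Thus $\alpha=k^{\#}(\alpha_{0})$ with $\alpha_{0}=H_{\Theta}+X$, so $\alpha=\alpha_{0}\circ(dk^{-1})_{ko}$. We also have $\psi_{ko}=k\circ\psi_{o}\circ C_{k^{-1}}$, where $C$ denotes conjugation, so $d(\psi_{ko})_{e}U=dk_{o}\,\widehat{\Ad(k^{-1})U}(o)$. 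Combining these,
\[
\mu(\alpha)(U)=\alpha_{0}\bigl(\widehat{\Ad(k^{-1})U}(o)\bigr)=\langle X,\Ad(k^{-1})U\rangle=\langle\Ad(k)X,U\rangle ,
\]
by $\Ad$-invariance of the Killing form. This is the asserted formula. Equivalently, the moment map of a cotangent lift is $G$-equivariant, so it is enough to know it at the origin.

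The main obstacle is a consistency check on the first step. The identification $T_{o}\mathbb{F}_{\Theta}\cong\mathfrak{g}/\mathfrak{p}_{\Theta}$, the sign convention for induced vector fields, and the pairing used in (\ref{iota}) of \cite{gasparim2017adjoint} must all match, so that $\langle X,\cdot\rangle$ really is the covector assigned to $H_{\Theta}+X$. In particular we should use the realization $\mathbb{F}_{\Theta}=K/K_{\Theta}$, $T_{o}\mathbb{F}_{\Theta}\cong\mathfrak{k}/\mathfrak{k}_{\Theta}$, and check that on $U\in\mathfrak{k}$ the formula agrees with $\langle X,U\rangle$. Since $\mathfrak{k}_{\Theta}\subset\mathfrak{z}_{\Theta}\perp\mathfrak{n}_{\Theta}^{+}$, the pairing descends to $\mathfrak{k}/\mathfrak{k}_{\Theta}$, and the two descriptions agree via the isomorphism $\mathfrak{k}/\mathfrak{k}_{\Theta}\to\mathfrak{g}/\mathfrak{p}_{\Theta}$. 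Beyond this the argument is purely formal.
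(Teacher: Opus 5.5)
Your proof is correct and follows the paper's line. Both arguments evaluate $\mu(\alpha)(U)=\alpha(\widehat{U}(x))$ via (\ref{dpsi}), read $\alpha$ through (\ref{iota}), and use the Killing orthogonality of $\mathfrak{p}_{\Theta}$ and $\mathfrak{n}_{\Theta}^{+}$ to replace the $\mathfrak{n}_{\Theta}^{-}$-component of $U$ by $U$ itself. The only difference is that the paper works directly at $ko$ with the conjugated decomposition $\mathfrak{g}=\Ad(k)\mathfrak{z}_{\Theta}\oplus\Ad(k)\mathfrak{n}_{\Theta}^{-}\oplus\Ad(k)\mathfrak{n}_{\Theta}^{+}$, while you compute at $o$ and transport by the $K$-equivariance of $\iota$; this makes explicit the identification $\widehat{U}(ko)\leftrightarrow\pr_{\Ad(k)\mathfrak{n}_{\Theta}^{-}}U$ that the paper uses tacitly.
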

\begin{proof} Let $\psi \colon G \times \mathbb{F}_{\Theta} \to \mathbb{F}_{\Theta}$ be the left action. By equation (\ref{dpsi}) we have $$\mu(\alpha)(U) = \alpha(d(\psi_{x})_{e}U) = \alpha(\hat{U}(x))$$ now, by (\ref{iota}) $$\alpha(\hat{U}(x)) = \langle \Ad(k)X, \hat{U}(x) \rangle $$ and $\hat{U}(x) \in T_{ko}\mathbb{F}_{\Theta} = \Ad(k)(\mathfrak{n}_{\Theta}^{-}).$ To see that in the formula we can drop the tilde, just note that $$\mathfrak{g} =  \Ad(k)(\mathfrak{z}_{\Theta}) \oplus^{\perp} (\Ad(k)(\mathfrak{n}_{\Theta}^{-}) \oplus \Ad(k)  (\mathfrak{n}_{\Theta}^{+}) )$$, hence   $$\langle \Ad(k)X, U\rangle = \langle \Ad(k)X, \pr_{\Ad(k)\mathfrak{n}_{\Theta}^{-}}(U) \rangle = \langle \Ad(k)X, \hat{U}(x) \rangle$$ for $x = ko \in \mathbb{F}_{\Theta}.$
\end{proof}
\begin{cor} \label{coro}
Let $\omega \in \Omega^{1}(G, \mathfrak{z}_{\Theta})$ be the canonical $G$-invariant connection on bundle $Z_{\Theta} \to G \to G/Z_{\Theta}$, and let $\Phi^{\ast}\eta \in \Omega^{1}(G, \mathfrak{z}_{\Theta} \times \mathfrak{z}_{\Theta}) $ be the extension of the $K \times K$-invariant connection.  Then for $\alpha = \Ad(g)H_{\Theta} =  \Ad(k)(H_{\Theta} + X) \in T^{\ast}_{ko} \mathbb{F} \cong \Ad(k)(H_{\Theta} + \mathfrak{n}^{+}_{\Theta})$, we have $$\Lambda_{\alpha}(U^{\#}(\alpha)) = \lambda_{H_{\Theta}}(\omega(U^{r}(g))) - \lambda_{H_{\Theta},0} (\Phi^{\ast} \eta (U^{r}(g))).$$ 
\end{cor}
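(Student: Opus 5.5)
The plan is to evaluate both sides of the claimed identity on the same data and compare them via Proposition \ref{mom} and Proposition \ref{ma invariant}. On the left-hand side, $U^{\#}$ is the infinitesimal generator of the lifted action. By equation (\ref{liouvillemoment}) of Remark \ref{impobs}, $\Lambda_{\alpha}(U^{\#}(\alpha)) = \mu(\alpha)(U)$. Proposition \ref{mom} then gives
\[
\Lambda_{\alpha}(U^{\#}(\alpha)) = \langle \Ad(k)X, U\rangle .
\]
Here we also use that the identification $\iota$ of (\ref{iota}) is $K$-equivariant and sends the adjoint orbit point $\Ad(g)H_{\Theta}$ to the covector $\langle \Ad(k)X,\cdot\rangle$. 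This lets us regard $U^{\#}$ as the fundamental field of $G$ acting on $\Ad(G)H_{\Theta}$, at least along the fibre structure.

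On the right-hand side we compute the two terms separately. For the canonical connection, $\omega(U^{r}(g)) = \proj_{\mathfrak{z}_{\Theta}}(\Ad(g^{-1})U)$, exactly as in the proof of the curvature proposition. Since $H_{\Theta}$ is Killing-orthogonal to $\mathfrak{n}_{\Theta}^{\pm}$, this yields
\[
\lambda_{H_{\Theta}}(\omega(U^{r}(g))) = \langle H_{\Theta}, \Ad(g^{-1})U\rangle = \langle \Ad(g)H_{\Theta}, U\rangle = \langle \Ad(k)(H_{\Theta}+X), U\rangle .
\]
For the second term we use equation (\ref{extended}) together with (\ref{KK}), taking $H_{1}=H_{\Theta}$ and $H_{2}=0$. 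Only the $\omega_{K}^{+}$ component survives, and Proposition \ref{ma invariant} gives
\[
\lambda_{H_{\Theta},0}(\Phi^{\ast}\eta(U^{r}(g))) = \langle \Ad(k_{+})H_{\Theta}, U\rangle,
\]
where $g = k_{+}a_{+}n_{+}$.

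It remains to check that $k_{+}$ is the same $k$ that appears in $\alpha = \Ad(k)(H_{\Theta}+X)$, possibly up to $K_{\Theta}$, which does not affect $\Ad(k)H_{\Theta}$. Write $g = k_{+}a_{+}n_{+}$. Then $\Ad(g)H_{\Theta} = \Ad(k_{+})\Ad(a_{+}n_{+})H_{\Theta}$. Since $a_{+}\in A$ centralizes $H_{\Theta}$ and $n_{+}\in N^{+}$, we have $\Ad(a_{+}n_{+})H_{\Theta} \in H_{\Theta} + \mathfrak{n}^{+}$. In fact the component in $\mathfrak{n}^{\Theta}$ vanishes: roots in $\langle\Theta\rangle$ annihilate $H_{\Theta}$, and brackets landing in $\mathfrak{g}_{\beta}$ with $\beta\in\langle\Theta\rangle$ come only from $\mathrm{ad}$ of elements of $\mathfrak{n}^{\Theta}$ applied to $H_{\Theta}$, which are zero. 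So $\Ad(a_{+}n_{+})H_{\Theta} \in H_{\Theta}+\mathfrak{n}_{\Theta}^{+}$. By uniqueness in the fibration of \cite{gasparim2017adjoint}, $k_{+}K_{\Theta} = kK_{\Theta}$, hence $\Ad(k_{+})H_{\Theta} = \Ad(k)H_{\Theta}$.

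Subtracting the two terms gives $\langle \Ad(k)X, U\rangle$, which matches the left-hand side. The main obstacle I expect is this last identification, namely that the base point $ko$ of $\alpha$ is the $K$-component of $g$ in $KAN^{+}$. The cleanest route is to argue directly with $P_{\Theta}$: we have $a_{+}n_{+}\in P_{\Theta}$ and $\Ad(P_{\Theta})H_{\Theta} = H_{\Theta}+\mathfrak{n}_{\Theta}^{+}$ (since $\mathfrak{n}_{\Theta}^{+}$ is an ideal of $\mathfrak{p}_{\Theta}$ and $Z_{\Theta}$ fixes $H_{\Theta}$). Together with the disjointness of the union in (\ref{iota}), this forces $k_{+}\in kK_{\Theta}$.
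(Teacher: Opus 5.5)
Your proposal is correct and follows the paper's proof: you get $\Lambda_{\alpha}(U^{\#}(\alpha))=\mu(\alpha)(U)=\langle \Ad(k)X,U\rangle$ from (\ref{liouvillemoment}) and Proposition~\ref{mom}, compute $\lambda_{H_{\Theta}}(\omega(U^{r}(g)))=\langle \Ad(g)H_{\Theta},U\rangle$ and $\lambda_{H_{\Theta},0}(\Phi^{\ast}\eta(U^{r}(g)))$ from (\ref{KK}), and subtract. What you add is an explicit check, via $\Ad(P_{\Theta})H_{\Theta}=H_{\Theta}+\mathfrak{n}_{\Theta}^{+}$, that the $K$-component $k_{+}$ of $g$ in $KAN^{+}$ agrees with $k$ modulo $K_{\Theta}$, which the paper uses tacitly; your aside about regarding $U^{\#}$ as the fundamental field of the adjoint action is not needed, and by (\ref{ks}) the two fields agree only up to a vertical term.
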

\begin{proof} Equation (\ref{liouvillemoment}) and Proposition \ref{mom} yields $$\Lambda_{\alpha}(U^{\#}(\alpha)) = \langle \Ad(k)X, U\rangle.$$
But now, $$\langle \Ad(k)X, Z \rangle =  \langle \Ad(k)H_{\Theta} + \Ad(k)X, U\rangle  - \langle \Ad(k)H_{\Theta}, U \rangle .$$ Note that  $$\omega(U^{r}(g)) = \pr_{\mathfrak{z}_{\Theta}} d(L_{g^{-1}})_{g} U^{r}(g) = \pr_{\mathfrak{z}_{\Theta}} d(L_{g^{-1}})_{g} d(R_{g})_{e} U = \pr_{\mathfrak{z}_{\Theta}} \Ad(g^{-1})U$$ whence $$\lambda_{H_{\Theta}}(\omega(U^{r}(g))) = \langle \Ad(g)H_{\Theta}, U\rangle.$$ Therefore, if  $\Ad(g)H_{\Theta} = \alpha$, then $$\lambda_{H_{\Theta}}(\omega(U^{r}(g))) = \langle \Ad(k)H_{\Theta} + \Ad(k)X, U \rangle .$$
Now, the last term was computed in terms of the connection $\omega_{K \times K}$ as in (\ref{KK}), and we get $$\lambda_{H_{\Theta}, 0}(\Phi^{\ast} \eta (U^{r}(g))) = \langle \Ad(k)H_{\Theta}, Z \rangle$$
\end{proof}
Let $\omega_{\Delta} \in \Omega^{1}(G, \mathfrak{z}_{\Theta} \times \mathfrak{z}_{\Theta})$ the form given by $\omega_{\Delta}(v) = (\omega(v), \omega(v)) \in \mathfrak{z}_{\Theta} \times \mathfrak{z}_{\Theta}.$ Hence we have the formula   \begin{equation} \label{lambda} \Lambda(U^{\#}(\alpha))= \big(\lambda_{H_{\Theta}, 0}(\omega_{\Delta} - \Phi^{\ast}\eta) \big)(U^{r}(g)) \end{equation} for $\alpha = \Ad(g)H_{\Theta}$. 
\\
We can consider an action $a \colon G \times T^{\ast}\mathbb{F}_{\Theta} \to T^{\ast} \mathbb{F}_{\Theta}$ by integrating the infinitesimal action $q \colon \mathfrak{g} \to \mathfrak{X}(T^{\ast}\mathbb{F}_{\Theta})$ given by $$q(X) = \begin{cases}
    X^{\#}, \quad X \in \mathfrak{k} \\ X^{\#}+ V_{X}, \quad X \in \mathfrak{s}
\end{cases} $$
where $V_{X}$ is a vertical vector field on $T^{\ast}\mathbb{F}_{\Theta}$. In the paper \cite{gasparim2017adjoint}, Section 2.3, the authors proved that the moment map of this action establishes a $G$-invariant diffeomorphism  $$ \ T^{\ast}\mathbb{F}_{\Theta} \to \Ad(G)H_{\Theta} .$$ 
In summary, we have the following description of the fundamental vector field $\widehat{U}(gZ_{\Theta}) = \pi_{\ast}U^{r}(g)$ in terms of vector fields defined on the cotangent bundle $T^{\ast}\mathbb{F}_{\Theta}$:
 \begin{equation} \label{ks} \pi_{\ast} U^{r}(g) = U_{\mathfrak{k}}^{\#}(\alpha) + U^{\#}_{\mathfrak{s}}(\alpha) + V_{U_{\mathfrak{s}}}(\alpha) 
\end{equation}
where $\alpha = \Ad(g)H_{\Theta} =gZ_{\Theta}$.
\begin{prop} \label{liu} The Liouville form is given by \begin{equation}
    \label{lio}\pi^{\ast}\Lambda =   \lambda_{H_{\Theta}, 0}(\omega_{\Delta}-\Phi^{\ast} \eta). \end{equation} 
\end{prop}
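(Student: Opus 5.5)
Both sides of (\ref{lio}) are $1$-forms on $G$, and the right-invariant fields $U^{r}$, $U\in\mathfrak{g}$, span every tangent space $T_gG$. It therefore suffices to show that both sides agree on each $U^{r}(g)$. On the right-hand side, formula (\ref{lambda}) already gives the value: it equals $\Lambda_{\alpha}(U^{\#}(\alpha))$ with $\alpha=\Ad(g)H_{\Theta}$. On the left-hand side, the value is $(\pi^{\ast}\Lambda)(U^{r}(g))=\Lambda_{\alpha}(\pi_{\ast}U^{r}(g))$. By (\ref{ks}) this is
\[
\Lambda_{\alpha}\big(U_{\mathfrak{k}}^{\#}(\alpha)+U_{\mathfrak{s}}^{\#}(\alpha)+V_{U_{\mathfrak{s}}}(\alpha)\big).
\]

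The key step is that the Liouville form vanishes on vertical vectors. Indeed $\Lambda_{\alpha}(v)=\alpha(p_{\ast}v)$ and $p_{\ast}V_{U_{\mathfrak{s}}}=0$, so the term $\Lambda_{\alpha}(V_{U_{\mathfrak{s}}}(\alpha))$ drops out. Linearity of $U\mapsto U^{\#}$ then gives $U_{\mathfrak{k}}^{\#}+U_{\mathfrak{s}}^{\#}=U^{\#}$, so
\[
(\pi^{\ast}\Lambda)(U^{r}(g))=\Lambda_{\alpha}(U^{\#}(\alpha)).
\]
This equals the right-hand side by Corollary \ref{coro} together with (\ref{lambda}).

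The point needing the most care is the identification underlying (\ref{ks}). The projection $\pi$ is taken to $T^{\ast}\mathbb{F}_{\Theta}$ through the moment-map diffeomorphism $T^{\ast}\mathbb{F}_{\Theta}\cong\Ad(G)H_{\Theta}$ of \cite{gasparim2017adjoint}. One must check that this diffeomorphism is the same as $\iota^{-1}$ from (\ref{iota}), so that the point $\alpha=\Ad(g)H_{\Theta}=\Ad(k)(H_{\Theta}+X)$ used in Corollary \ref{coro} is the same covector. One must also check that the base-point projection $p$ is unaffected by the vertical correction $V_X$, i.e. $p_{\ast}(\pi_{\ast}U^{r})=\widehat{U}(ko)$. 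For the latter I would argue through the fibration $G\to G/P_{\Theta}$: $p\circ\pi(g)=gP_{\Theta}=ko$, and differentiating along $\exp(tU)g$ gives $\widehat U(ko)$. This holds because $p\circ\pi$ is $G$-equivariant: $\Ad(g)H_{\Theta}\in\Ad(k)(H_{\Theta}+\mathfrak{n}_{\Theta}^{+})$ exactly when $gP_{\Theta}=kP_{\Theta}$. This observation also gives the vanishing on vertical terms directly, without needing (\ref{ks}) explicitly, since $\Lambda_\alpha(\pi_\ast U^r(g))=\alpha(\widehat U(ko))=\langle\Ad(k)X,U\rangle$.

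Finally, I would recheck that Corollary \ref{coro} really computes $\langle \Ad(k)X,U\rangle$ with $k$ the $K$-component of $g$ in $KAN^{+}$. Only the $\lambda_{H_{\Theta},0}$ part of (\ref{KK}) enters, and this is consistent with Proposition \ref{ma invariant}. With that in place, the two sides agree on all $U^{r}(g)$, which proves (\ref{lio}).
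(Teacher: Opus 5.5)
Your proof is correct and follows the paper's argument. As in the paper, you evaluate both sides on $U^{r}(g)$, expand $\pi_{\ast}U^{r}(g)$ via (\ref{ks}), drop the vertical term $V_{U_{\mathfrak{s}}}$ because $\Lambda_{\alpha}(v)=\alpha(p_{\ast}v)$, and conclude with (\ref{lambda}). Your side remark is also valid: since $p\circ\pi(g)=gP_{\Theta}$ is $G$-equivariant, you get $\Lambda_{\alpha}(\pi_{\ast}U^{r}(g))=\alpha(\widehat{U}(ko))=\langle \Ad(k)X,U\rangle$ directly, which usefully avoids relying on the vertical corrections in (\ref{ks}).
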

\begin{proof}
    Denote by $p \colon T^{\ast}\mathbb{F}_{\Theta} \to \mathbb{F}_{\Theta}$ the usual projection.  Note that if $v \in T_{\alpha} T^{\ast}\mathbb{F}_{\Theta}$ is vertical, then $\Lambda(v) = \alpha(p_{\ast}v) = 0$ as $p_{\ast}v=0$. Hence by \ref{ks} $$\Lambda(\pi_{\ast} U^{r}(g)) = \Lambda(U_{\mathfrak{k}}^{\#}(\alpha) + U^{\#}_{\mathfrak{s}}(\alpha) + V_{U_{\mathfrak{s}}}(\alpha)) = \Lambda(U^{\#}(\alpha)).$$ The result now follows by Equation \ref{lambda}. 
\end{proof}
Now we are able to state and prove the main result of this work. 
 \begin{teo} \label{can}
    The canonical symplectic form $\Gamma \in \Omega^{2}(T^{\ast} \mathbb{F}_{\Theta}, \mathbb{R})$ is given by $$\pi^{\ast} \Gamma = - \lambda_{H_{\Theta}, 0} (F_{\Delta}-F_{K \times K})$$ where $F_{\Delta}$ and $F_{K \times K}$ are the curvatures of $\omega_{\Delta}$ and $\Phi^{\ast} \eta$, respectively. \\ Moreover, if $\kappa$ denotes the KKS form on $\Ad(G)H_{\Theta} \cong T^{\ast}\mathbb{F}_{\Theta}$ we have that $$\pi^{\ast} \Gamma = -\lambda_{H_{\Theta}}F \iff \Gamma =- F_{\lambda_{H_{\Theta}}}=\kappa.$$
    Therefore the KKS form is exact. 
\end{teo}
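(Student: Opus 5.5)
The plan is to differentiate the identity of Proposition \ref{liu}. Since $\Gamma=-d\Lambda$ and pullback commutes with $d$, we have
$$\pi^{\ast}\Gamma=-d\pi^{\ast}\Lambda=-d\big(\lambda_{H_{\Theta},0}(\omega_{\Delta}-\Phi^{\ast}\eta)\big)=-\lambda_{H_{\Theta},0}\big(d\omega_{\Delta}-d\Phi^{\ast}\eta\big),$$
because $\lambda_{H_{\Theta},0}$ is a constant linear functional. The task is then to replace $d$ by the curvatures. By the structure equation, $F_{\Delta}=d\omega_{\Delta}+\tfrac12[\omega_{\Delta},\omega_{\Delta}]$, and similarly $F_{K\times K}=d\Phi^{\ast}\eta+\tfrac12[\Phi^{\ast}\eta,\Phi^{\ast}\eta]$, with brackets taken in $\mathfrak{z}_{\Theta}\times\mathfrak{z}_{\Theta}$. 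So it suffices to show that $\lambda_{H_{\Theta},0}$ kills every bracket term. This holds because $\lambda_{H_{\Theta},0}(u,v)=\langle H_{\Theta},u\rangle$ and, for $u_1,u_2\in\mathfrak{z}_{\Theta}$,
$$\langle H_{\Theta},[u_1,u_2]\rangle=\langle[H_{\Theta},u_1],u_2\rangle=0,$$
since $\mathfrak{z}_{\Theta}$ is the centralizer of $H_{\Theta}$ and the Killing form is $\operatorname{ad}$-invariant. Hence $\lambda_{H_{\Theta},0}$ vanishes on $[\mathfrak{z}_{\Theta}\times\mathfrak{z}_{\Theta},\mathfrak{z}_{\Theta}\times\mathfrak{z}_{\Theta}]$. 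This gives $\lambda_{H_{\Theta},0}\,d\omega_{\Delta}=\lambda_{H_{\Theta},0}F_{\Delta}$ and $\lambda_{H_{\Theta},0}\,d\Phi^{\ast}\eta=\lambda_{H_{\Theta},0}F_{K\times K}$, which yields the first formula.

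For the second statement, note that $\lambda_{H_{\Theta},0}F_{\Delta}=\lambda_{H_{\Theta}}F$, where $F$ is the curvature of $\omega$, because the first component of $\omega_{\Delta}$ is $\omega$. It therefore remains to show that $\lambda_{H_{\Theta},0}F_{K\times K}=0$, i.e.\ that $\lambda_{H_{\Theta}}\circ d\Phi^{\ast}\omega_{K}^{+}=0$ (again by the bracket argument). Two routes are available. First, $\Phi^{\ast}F_{K\times K}$ is the pullback of the curvature of $\eta$, whose first component is $F_{K}^{+}$; by the Lemma above this takes values in $\mathfrak{m}_{\Theta}$. Moreover $\mathfrak{m}_{\Theta}\perp H_{\Theta}$: indeed $H_{\Theta}\in\mathfrak{a}_{\Theta}$, and $\mathfrak{m}$, $\mathfrak{a}_{\Theta}^{\perp_{\mathfrak a}}$ and the root spaces are all Killing-orthogonal to $\mathfrak{a}_{\Theta}$ (see also the Remark following that Lemma). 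Alternatively, one can argue directly: by (\ref{KK}) the $1$-form $\lambda_{H_{\Theta},0}\Phi^{\ast}\eta$ is $U^{r}(g)\mapsto\langle \Ad(k_{+})H_{\Theta},U\rangle$, which is the pullback of the canonical $1$-form of the compact orbit $\Ad(K)H_{\Theta}$, whose exterior derivative must be checked to vanish. I would take the curvature route, since it reuses the Lemma. Consequently $\pi^{\ast}\Gamma=-\lambda_{H_{\Theta}}F=-\pi^{\ast}F_{\lambda}$, and by the earlier Proposition $F_{\lambda}=-\kappa$. Since $\pi$ is a submersion, $\pi^{\ast}$ is injective, so $\Gamma=\kappa$; exactness then follows from $\Gamma=-d\Lambda$.

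The main obstacle is this vanishing step. One must justify that the pairing with $H_{\Theta}$ of the first component of $\Phi^{\ast}F_{K\times K}$ equals the pairing of $F_{K}^{+}$ evaluated on pushed-forward vectors. This requires that $\Phi$ composed with the first projection is the map $G\to G/N_{\Theta}^{+}$, which is equivariant, so that curvature commutes with this pullback. It also requires that the $\mathfrak{a}_{\Theta}$-component of $\omega_{K}^{+}$ noted in Proposition \ref{ma invariant} is closed and does not contribute to the curvature. I would verify the latter using $[\mathfrak{m}_{\Theta},\mathfrak{a}_{\Theta}]=0$ (Lemma \ref{ma}), so that the $\mathfrak{a}_{\Theta}$-component of the structure equation carries no bracket term.
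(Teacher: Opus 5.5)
Your proposal is correct and follows the paper's proof: differentiate (\ref{lio}), apply the structure equation, and kill the bracket terms with $\lambda_{H_{\Theta},0}$. Your ad-invariance argument $\langle H_{\Theta},[u_1,u_2]\rangle=\langle[H_{\Theta},u_1],u_2\rangle=0$ is just a slicker substitute for the paper's $[\mathfrak{z}_{\Theta},\mathfrak{z}_{\Theta}]\subset\mathfrak{m}_{\Theta}\perp\mathfrak{a}_{\Theta}$, and the ``Moreover'' part comes, as the paper's Lemma and Remark on $F_{K}^{+}$ intend, from $F_{K}^{+}$ being $\mathfrak{m}_{\Theta}$-valued; the ``obstacles'' in your last paragraph are not real, since curvature defined by the structure equation commutes with any pullback and the closedness of the $\mathfrak{a}_{\Theta}$-component of $\omega_{K}^{+}$ is a consequence of that Lemma rather than an extra requirement.
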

\begin{proof}
    If we differentiate Equation (\ref{lio}) we obtain (by the structure equation) $$\pi^{\ast} d\Lambda = \lambda_{H_{\Theta}, 0}(F_{\Delta}  - F_{K \times K} - \tfrac{1}{2}[\omega, \omega] + \tfrac{1}{2}[\omega_{K \times K}, \omega_{K \times K}]).$$ Since $[\omega, \omega]$ takes values in $ [\mathfrak{z}_{\Theta}, \mathfrak{z}_{\Theta}] = [\mathfrak{m}_{\Theta} \oplus \mathfrak{a}_{\Theta}, \mathfrak{m}_{\Theta} \oplus \mathfrak{a}_{\Theta}] = [\mathfrak{m}_{\Theta}, \mathfrak{m}_{\Theta}] \subset \mathfrak{m}_{\Theta} $, (cf. Lemma \ref{ma}) and analogously $[\omega_{K \times K}, \omega_{K \times K}]$ takes values in $\mathfrak{m}_{\Theta} \times \mathfrak{m}_{\Theta}$, the result follows by observing that $\langle \mathfrak{a}_{\Theta}, \mathfrak{m}_{\Theta} \rangle = 0$ and $H_{\Theta} \in \mathfrak{a}_{\Theta}$.
\end{proof}
\section{An example: The Lagrangian Grassmanian} \label{sec:lag}
Let $\omega_{0}$ be the standard symplectic struture on $\R^{2n}$, given by $$\omega_{0} = \sum_{i=1}^{n} dx_{i} \wedge dy_{i}$$ where $(x_{1}, \hdots, x_{n}, y_{1}, \hdots, y_{n})$ are the standard cartesian coordinates of $\R^{2n}$. The symplectic group of $(\R^{2n}, \omega_{0})$ consists of all matrices of the form $$\Sp(2n) = \Big\{\begin{pmatrix}
    p & q \\ r & s
\end{pmatrix} \in \GL(2n, \mathbb{R})  : p^{T}s - r^{T}q = 1, \: p^{T}r = r^{T}p, \: q^{T}s =s^{t}q \Big\}. $$
Its maximal compact subgroup is $U(n) = \Sp(2n, \mathbb{R}) \cap O(2n)$ that consists of all matrices of the form $$U(n) = \Big\{ \begin{pmatrix} g & b \\ -b & g \end{pmatrix} \in \GL(2n, \mathbb{R}) : g^{T}g + b^{T}b = I, \: g^{T}b = b^{T}g \Big\}.$$ 
We see that the set of orthogonal matrices is embedded in $U(n)$ via the diagonal map $$O(n) \cong \Delta(O(n)) = \Big\{ \begin{pmatrix}
    g & 0 \\ 0 & g
\end{pmatrix} : g^{T}g=I \Big\} \subset U(n).$$Recall that a subspace $L \subset \R^{2n}$ is called \textit{Lagrangian} if $\omega_{0} \vert_{L \times L} = 0$ and $\dim L = n$. We set  $\LGr_{n} \subset \Gr_{n}(\R^{2n})$ the set of all Lagrangian subspaces of $(\R^{2n}, \omega_{0})$, the so-called \textit{Lagrangian Grassmanian} of $(\R^{2n}, \omega_{0})$. Like the  Grassmanian manifold , the Lagrangian Grassmanian is also a homogeneous space. 
\begin{prop}
The Lagrangian Grassmanian is the homogeneous space $$\LGr_{n} \cong \Sp(2n)/ P \cong U(n)/\Delta(O(n))$$ where $P \subset \Sp(2n)$ is the (maximal) parabolic subgroup of matrices of the form $$P = \Big\{ \begin{pmatrix} g & gb \\ 0 & (g^{-1})^{T}  \end{pmatrix} : g \in \GL(n, \mathbb{R}), b^{T} = b\Big\}.$$  
\end{prop}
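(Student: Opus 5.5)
We must prove $\LGr_{n} \cong \Sp(2n)/P \cong U(n)/\Delta(O(n))$. The plan is to use the transitive action of $\Sp(2n)$ on $\LGr_{n}$ by $g\cdot L = gL$, which is well defined because symplectic maps send Lagrangian subspaces to Lagrangian subspaces. We then identify the stabilizer of the base point $L_{0} = \spam\{e_{1},\hdots,e_{n}\}$ (the $x$-plane) with $P$. The same argument, repeated with $U(n)$ in place of $\Sp(2n)$, gives the compact description.

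\textbf{Transitivity.} Let $L$ be Lagrangian. Choose a basis $u_{1},\hdots,u_{n}$ of $L$ that is orthonormal for the Euclidean inner product, and set $v_{i} = -Ju_{i}$, where $J$ is the standard complex structure with $\omega_{0}(u,v) = \langle Ju, v\rangle$ up to sign convention. Since $\omega_{0}|_{L\times L}=0$, we have $JL \perp L$. Hence $u_{1},\hdots,u_{n},v_{1},\hdots,v_{n}$ is simultaneously an orthonormal basis and a symplectic basis. The matrix with these columns therefore lies in $\Sp(2n)\cap O(2n) = U(n)$ and sends $L_{0}$ to $L$. This shows transitivity of $U(n)$, and a fortiori of $\Sp(2n)$. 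Since $\LGr_{n}$ is a closed submanifold of $\Gr_{n}(\R^{2n})$ and the actions are smooth and transitive, the standard theorem on homogeneous spaces gives equivariant diffeomorphisms with the coset spaces by the stabilizers.

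\textbf{Stabilizers.} For $\Sp(2n)$: a matrix $\begin{pmatrix} p & q\\ r & s\end{pmatrix}$ preserves $L_{0}$ iff $r=0$. The symplectic relations with $r=0$ read $p^{T}s = 1$ and $q^{T}s = s^{T}q$. The first gives $s = (p^{-1})^{T}$. Writing $q = pb$ with $g=p$, the second becomes $b^{T}p^{T}(p^{-1})^{T} = p^{-1}pb$, i.e. $b^{T}=b$. This is exactly $P$. For $U(n)$: an element $\begin{pmatrix} g & b\\ -b & g\end{pmatrix}$ preserves $L_{0}$ iff $b=0$. Then $g^{T}g=I$, so the stabilizer is $\Delta(O(n))$. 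Since $U(n)\cap P = \Delta(O(n))$, the two descriptions are compatible: $U(n)/\Delta(O(n)) \to \Sp(2n)/P$ is a bijective equivariant map, consistent with Proposition~\ref{K stab}.

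The only mildly delicate point is the transitivity step, namely verifying that the constructed frame is both orthonormal and symplectic. This reduces to the identity $\omega_{0}(u,v)=\langle Ju,v\rangle$ together with the isotropy of $L$; everything else is direct matrix computation.
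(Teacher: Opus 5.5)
Your proof is correct and follows the paper's argument: transitivity of $U(n)\subset\Sp(2n)$ via an orthonormal frame $u_i$ of $L$ completed by $\pm Ju_i$ (sign chosen so that $\omega_0(u_i,v_j)=\delta_{ij}$), followed by the block-matrix computation showing that the stabilizer of $L_0$ is $P$, with $b=g^{-1}q$ symmetric. You additionally make explicit that the $U(n)$-stabilizer is $\Delta(O(n))=U(n)\cap P$, which the paper leaves implicit.
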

\begin{proof}
    First, $\Sp(2n)$ acts transitively on $\LGr_{n}$. To see this, let $L \in \LGr_{n}$ be arbitrary and choose an orthonormal basis $\{u_{i}\}$ of $L$, Let $J$ be the canonical complex structure of $\R^{2n}$ and define $JL = \spam\{v_{i}\}$ where $v_{i}:= Ju_{i}$. This subspace is also Lagrangian  and $L \oplus JL = \R^{2n}$.  Let $A \in \GL(2n, \mathbb{R})$ be the matrix that sends $e_{i} \mapsto u_{i}$ and $f_{i} \mapsto v_{i}$, where $\{e_{i}, f_{i}\}$ is the canonical basis of $\R^{2n}$. A straightforward computation shows that $A$ is holomorphic and orthogonal, whence $A \in U(n) \subset \Sp(2n)$. This shows a stronger claim, that in fact,  $U(n)$ acts transitively on $\LGr_{n}$. To complete the proof we have to compute the isotropy group of the canonical action of $\Sp(2n)$ on $\LGr_{n}$. Let $L_{0} = \R^{n} \oplus \{0\}$ be the horizontal Lagrangian, then $A \in \Sp(2n)$ fixes $L_{0}$ iff $A$ is of the form $$\begin{pmatrix} g & c \\ 0 & h \end{pmatrix}, \qquad g \in \GL(n, \mathbb{R}), \: g^{T}h=I, \:  c^{T}h = h^{T}c.$$ Note that $(h^{T}c)^{T} = c^{T}h = h^{T}c$, hence $b:= h^{T}c$ is symmetric, and $c = (h^{T})^{-1} b = gb.$
\end{proof}
\begin{obs} \label{obs 5}
By our construction, the origin of $\LGr_{n}$ corresponds to the horizontal Lagrangian $L_{0} = \R^{n} \oplus \{0\}$. If we choose to fix the vertical Lagrangian $L_{0}^{\ast} = \{0\} \oplus \R^{n}$ we would obtain the parabolic subgroup $$P^{-} = \Big\{ \begin{pmatrix}(g^{-1})^{T} & 0 \\ gb & g\end{pmatrix} : g \in \GL(n, \mathbb{R}) , b^{T}=b \Big\}$$
Note that $P$ and $P^{-}$ are related via the complex structure $$P^{-} = \widetilde{w}_{0} P \: \widetilde{w}_{0}^{-1}, \qquad \widetilde{w}_{0} = \begin{pmatrix} 0 & I \\ -I & 0 \end{pmatrix} \in U(n)$$ and $\widetilde{w}_{0}$ corresponds exactly to the principal involution $w_{0}$ of the Weyl group of $\mathfrak{sp}(2n, \mathbb{R})$ given by $w_{0} = -\id. $
\end{obs}
The Lie algebra of $\Sp(2n)$ is given by $$\mathfrak{sp}(2n, \mathbb{R}) = \Big\{ \begin{pmatrix} A & B \\ C & -A^{T}  \end{pmatrix} : A \in \mathfrak{gl}(n, \mathbb{R}), \: B = B^{T}, \: C=C^{T} \Big\}.$$ The subset of all diagonal matrices $\mathfrak{a} \subset \mathfrak{sp}(2n, \mathbb{R})$ is a a maximal abelian subalgebra of the symmetric part with respect to the Cartan involution $\theta(X) =-X^{T}$, that is,  $$\mathfrak{s} = \Big\{ \begin{pmatrix}
    A & B \\ B& -A
\end{pmatrix} : A^{T} = A, \: B^{T}=B \Big\} \subset \mathfrak{sp}(2n, \mathbb{R}).$$ Thus, for each $i=1, \hdots, n$, the linear functional  $e_{i} \colon \diag(h_{1}, \hdots, h_{n}, -h_{1}, \hdots, -h_{n}) \in \mathfrak{a} \mapsto h_{i} \in \mathbb{R}$, allow us to write the (restricted) root system of $\mathfrak{sp}(2n, \mathbb{R})$,  $$\Delta = \{\pm e_{i} \pm e_{j} : i<j \} \cup \{\pm2e_{i} : i=1, \hdots, n\}.$$
The set of simple roots is $\Sigma = \{\alpha_{1}, \hdots, \alpha_{n}\}$ where $\alpha_{n} = 2e_{n}$ and $\alpha_{i}  = e_{i} - e_{i+1}$ for $i<n$. We can then represent the Lagrangian Grassmannian as the flag manifold  $$\LGr_{n} = \mathbb{F}_{\Theta}, \qquad \Theta = \Sigma \setminus \{\alpha_{n}\}.$$
A characteristic element $H_{\Theta}$ of $\LGr_{n}$ is given by $$H_{\Theta} = \dfrac{1}{2} \begin{pmatrix}
    I &  0 \\ 0  & -I
\end{pmatrix} $$ so that $$\LGr_{n} = \Ad(U(n)) H_{\Theta} \subset \mathfrak{s}.$$
We identify the subalgebras $\mathfrak{sp}(2n, \mathbb{R})$: \begin{align*}&\mathfrak{k} =  \mathfrak{u(n)=} \Big\{ \begin{pmatrix} A & B \\ - B & A \end{pmatrix} : A^{T}=-A, \: B^{T}=B\Big\},
\\ &\mathfrak{n}^{+}_{\Theta} = \Big\{ \begin{pmatrix}
    0 & B \\ 0 & 0
\end{pmatrix} : B^{T}=B\Big\}, \: \mathfrak{n}_{\Theta}^{-} = \Big\{ \begin{pmatrix}
    0 & 0 \\ C & 0 
\end{pmatrix} : C^{T}=C \Big\},
\\ &\mathfrak{z}_{\Theta} = \Big\{ \begin{pmatrix} A & 0 \\ 0 & -A^{T} \end{pmatrix} : A \in \mathfrak{gl}(n, \mathbb{R}) \Big\}
\\ & \mathfrak{k}_{\Theta} = \Big\{ \begin{pmatrix}
    A & 0 \\ 0 & A
\end{pmatrix} : A^{T}=-A \Big\} = \Delta(\mathfrak{o}(n))
\end{align*}
The $K$-invariant connection $\omega_{K} \in \Omega^{1}(K, \mathfrak{k}_{\Theta}) $  (at the origin) is given by $$\omega_{K} \begin{pmatrix} A & B \\ -B & A  \end{pmatrix} = \proj_{\mathfrak{k}_{\Theta}} \begin{pmatrix} A & B \\ -B & A  \end{pmatrix} = \begin{pmatrix} A & 0 \\ 0 & A  \end{pmatrix}.$$
To compute the extended connection $\omega_{K}^{+} \in \Omega^{1}(G/N_{\Theta}^{+}, \mathfrak{z}_{\Theta})$ we must note that  $$T_{[e, e]} (K \times_{K_{\Theta}} Z_{\Theta} ) \cong (\mathfrak{k} \times \mathfrak{z}_{\Theta})/V, \qquad V = \{(X, -X) : X \in \mathfrak{k}_{\Theta} \}.$$ 
\begin{lema}
    The map $\varphi \colon T_{[e, e]}(K \times_{K_{\Theta}} Z_{\Theta}) \to \mathfrak{sp}(2n, \mathbb{R})/\mathfrak{n}_{\Theta}^{+}$ given by  $$ \varphi   \Big(\Big(\begin{pmatrix} A & B \\ -B & A \end{pmatrix}, \begin{pmatrix}
        C& 0 \\ 0 & -C^{T}
    \end{pmatrix} \Big) + V \Big) =  \begin{pmatrix} A + C & 0 \\ -B & A-C^{T} \end{pmatrix} $$ is an isomorphism of vector spaces.
\end{lema}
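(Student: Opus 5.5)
The approach is to treat $\varphi$ as the differential at $[e,e]$ of the bundle isomorphism $\psi\colon K\times_{K_\Theta}Z_\Theta\to G/N_\Theta^+$, $[k,z]\mapsto kzN_\Theta^+$, from Proposition \ref{prop 4}. In the matrix picture of $\mathfrak{sp}(2n,\mathbb{R})/\mathfrak{n}_\Theta^+$ we use the complement $\mathfrak{n}_\Theta^-\oplus\mathfrak{z}_\Theta$, i.e.\ matrices with zero upper right block. Since $\psi$ is a diffeomorphism, $d\psi$ is automatically an isomorphism, and it remains to identify it with the stated formula.

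\textbf{Step 1 (computing the differential).} The map $K\times Z_\Theta\to G/N_\Theta^+$, $(k,z)\mapsto kzN_\Theta^+$, has differential $(X,Y)\mapsto X+Y \bmod \mathfrak{n}_\Theta^+$ at $(e,e)$. This vanishes on $V=\{(X,-X)\}$, so it descends to $(\mathfrak{k}\times\mathfrak{z}_\Theta)/V$. Reducing modulo $\mathfrak{n}_\Theta^+$ means deleting the upper right block. For $X=\begin{pmatrix}A&B\\-B&A\end{pmatrix}$ and $Y=\begin{pmatrix}C&0\\0&-C^T\end{pmatrix}$ this gives exactly $\begin{pmatrix}A+C&0\\-B&A-C^T\end{pmatrix}$. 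So $\varphi$ is well defined and linear.

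\textbf{Step 2 (bijectivity, checked directly).} Dimensions: $\dim\mathfrak{k}+\dim\mathfrak{z}_\Theta-\dim\mathfrak{k}_\Theta=n^2+n^2-\tfrac{n(n-1)}{2}$. Meanwhile $\dim\mathfrak{sp}(2n)-\dim\mathfrak{n}_\Theta^+=(2n^2+n)-\tfrac{n(n+1)}{2}$. Both equal $\tfrac{3n^2+n}{2}$, so it suffices to prove injectivity.

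Suppose $\varphi=0$. Then $B=0$, $A+C=0$, and $A-C^T=0$. Hence $C=-A$ with $A$ skew, which gives $C^T=A$, consistent. So $(X,Y)=(X_0,-X_0)$ with $X_0=\mathrm{diag}(A,A)\in\mathfrak{k}_\Theta$, and therefore $(X,Y)\in V$.

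As a sanity check on surjectivity, note that the image consists of matrices $\begin{pmatrix}P&0\\Q&R\end{pmatrix}$ with $Q$ symmetric and $P,R$ arbitrary. Given such a matrix, take $B=-Q$, $A=\tfrac12\bigl(\text{skew part of } P+R^T\bigr)$-type choices, and solve for $C$. Since the dimension count already settles this, no explicit inverse is needed.

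\textbf{Main obstacle.} The only delicate point is bookkeeping in Step 1: the target must be read as $\mathfrak{g}/\mathfrak{n}_\Theta^+$ represented by the complement $\mathfrak{n}_\Theta^-\oplus\mathfrak{z}_\Theta$, and the sign conventions in $V$ must match how $K_\Theta$ acts in the associated bundle, namely $[kh,z]=[k,h^{-1}z]$. I will verify this convention against Appendix I before writing the computation.
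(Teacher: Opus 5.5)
Your proof is correct and is essentially the paper's: well-definedness (the map vanishes on $V$), injectivity because $B=0$ and $C=-A$ force $u\in V$, and equality of dimensions. Your explicit count $\tfrac{3n^2+n}{2}$ and your reading of $\varphi$ as $d\psi_{[e,e]}$ usefully fill in what the paper calls straightforward. Two slips in your side remarks are harmless but worth fixing: in the image one has $R=-P^{T}$ rather than $R$ arbitrary, and the relation coming from Appendix I is $[kh,z]=[k,hz]$ (not $[k,h^{-1}z]$), which is exactly what makes $V=\{(X,-X)\}$ and $\psi$ well defined.
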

\begin{proof} It is straightforward to see that $\varphi$ is a well defined linear map. To see the injectivity pick a generic element \begin{equation} \label{u} u =\Big( \begin{pmatrix} A & B \\ -B & A \end{pmatrix}, \begin{pmatrix}
        C & 0 \\ 0 & -C^{T}
    \end{pmatrix} \Big) \in \mathfrak{k} \times \mathfrak{z}_{\Theta} \end{equation} then $\varphi(u+V) =0$ iff $B=0, A = C^{T}, A= -C$; but this says that $u \in V$, hence $\varphi$ is injective. Since the dimensions of both spaces are equal we have that $\varphi$ is an isomorphism. 
\end{proof}
By identifying $$u = \underbrace{\begin{pmatrix} A + C & 0 \\ -B & A-C^{T}  \end{pmatrix}}_{\in \mathfrak{sp}(2n, \mathbb{R})/\mathfrak{n}_{\Theta}^{+}}\longleftrightarrow \underbrace{\Big( \begin{pmatrix}
    A & B \\ -B & A
\end{pmatrix}, \begin{pmatrix}
    C & 0 \\ 0 & -C^{T}
\end{pmatrix} \Big) + V}_{\in T_{[e, e]}(K \times_{K_{\Theta}} Z_{\Theta})}$$ we can compute the extended connection $$\omega_{K}^{+} (u) = \omega_{K} \begin{pmatrix}
        A & B \\-B & A 
    \end{pmatrix} + \begin{pmatrix}C & 0 \\ 0 & -C^{T}  \end{pmatrix} = \begin{pmatrix} A + C & 0 \\ 0 & A-C^{T} \end{pmatrix} = \proj_{\mathfrak{z}_{\Theta}}(u).$$
     Let us denote by $M$ the adjoint orbit $$M = \Ad(\Sp(2n)) H_{\Theta} \cong T^{\ast} \LGr_{n}.$$ Note that $$T_{H_{\Theta}} M \cong \mathfrak{sp}(2n, \mathbb{R})/\mathfrak{z}_{\Theta} = \mathfrak{n}_{\Theta}^{-} \oplus \mathfrak{n}_{\Theta}^{+}=\Big\{ \begin{pmatrix} 0 & B \\ C & 0 \end{pmatrix}  : B ^{T} = B, \: C^{T}=C \Big\}$$
For an arbitrary element $$U=\begin{pmatrix} A & B \\ C & -A^{T} \end{pmatrix} \in \mathfrak{sp}(2n, \mathbb{R})$$  the fundamental vector field $\widetilde{U} \in \mathfrak{X}(M)$ at the origin $H_{\Theta}$ is given by $$\widetilde{U}(H_{\Theta}) = [U, H_{\Theta}] = \begin{pmatrix}
    0 & -B \\ C & 0
\end{pmatrix}$$ so that the KKS form $\kappa$ in $M$ is  \begin{align*}\kappa_{H_{\Theta}} (\widetilde{U}_{1}, \widetilde{U}_{2}) &= \tr (H_{\Theta} \cdot [U_{1}, U_{2}])  = \tr (B_{1}C_{2}-B_{2}C_{1}). 
\end{align*}
Since the principal involution of $\mathfrak{sp}(2n, \mathbb{R})$ is $w_{0} = -\id$, the dual flag verifies $\Theta^{\ast} = -w_{0}(\Theta) = \Theta$ and therefore we have self duality $\LGr_{n} = \LGr_{n}^{\ast}.$ By identifying the origins of $\LGr_{n}$ and $\LGr_{n}^{\ast}$ with the horizontal and vertical Lagrangians (cf. Remark \ref{obs 5}) we can write the open orbit $$\mathcal{O} = \Sp(2n) \cdot (L_{0}, L_{0}^{\ast}) \subset \LGr_{n} \times \LGr_{n}^{\ast}$$ as $$\mathcal{O} = \{ (L, L') \in \LGr_{n} \times \LGr_{n} : L \cap L' =0 \}$$
which corresponds exactly to the open Bruhat cell once we decompose $$\LGr_{n} \times \LGr_{n} = \bigcup_{k=0}^{n} \Lambda^{k}, \qquad \Lambda^{k} :=\{(L, L') \in \LGr_{n} \times \LGr_{n} : \dim (L \cap L')=k\}.$$
The Lagrangian Grassmannian exhibits, in a particularly transparent form, the interplay between parabolic geometry, connections, and symplectic structures that motivates our general construction. The identification $T^{*}\LGr_{n}\cong\Ad(G)H_{\Theta}$ shows how the cotangent geometry of the flag manifold is encoded in the corresponding adjoint orbit, with the extended connection providing the bridge between the Liouville and KKS descriptions.
\section*{Appendix I: extended connections on principal bundles}\label{app:connection}
In this section we explore a classical construction of extending structural groups and connections on principal bundles. Although some of the results in this section can already be found (essentialy in \cite{kobayashi1996foundations}, \cite{san2021lie}, \cite{hamilton2017mathematical}, \cite{tu2017differential}) we include proofs adapted to the context of our work.
Let $G$ be a Lie group and $H \subset G$ a subgroup. Given a $H$-principal bundle $Q \to M$ we want to build a principal bundle $P \to M$ with structure group $G$, so that $Q$ is a principal subbundle of $P$. As a consequence, every connection form $\omega \in \Omega^{1}(Q, \mathfrak{h})$ has a natural extension $\widetilde{\omega} \in \Omega^{1}(P, \mathfrak{g})$. 
Let $G$ be a Lie group and $H \subset G$ a closed subgroup. Consider a principal $H$-bundle $H \to Q \to M$.
    The manifold $P:=Q \times_{H} G$ can be endowed with a principal bundle structure $$[q, g] \cdot g' :=[q, gg'], \qquad q \in Q, h \in H, g \in G.$$ where the projection $\pi_{P} \colon P \to M$ is given by $\pi_{P}([q, g]) = \pi_{Q}(q).$
    
    Recall that $H$ acts on $Q \times G$ via $$h \cdot (q, g) = (qh, h^{-1}g), \qquad h \in H, (q, g) \in Q \times G.$$
    For every $X \in \mathfrak{h}$, the fundamental vector field of this action $X^{\#} \in \mathfrak{X}(Q \times G)$ is defined by \begin{equation} \label{fund} X^{\#}(q, g) = (\widetilde{X}(q), \dfrac{d}{dt}\Big \vert_{t=0} \exp(-tX) g ) \end{equation} where $\widetilde{X}$ is the fundamental vector field of $H$-right action on $Q$.
    For every $g \in G$, let $R'_{g}, L_{g}' \colon Q \times G \to Q \times G$ denote the maps \begin{align*}R'_{g}(q, h) = (q, hg), \qquad L_{g}'( q, h) &= (q, gh), \qquad (q, h) \in Q \times G. \end{align*}
 Then, if we denote by $$\pi_{H} \colon Q \times H \owns (q, h) \longmapsto [g, h] \in Q \times_{H} G $$ the right action $R_{g} \colon P \to P$  verifies \begin{equation} \label{Rg} R_{g}([q, h]) = [q, hg] = \pi_{H}(R_{g}'(q, h)) \iff R_{g} \circ \pi_{H} = \pi_{H} \circ R_{g}' \end{equation}
    Suppose that $\omega \in \Omega^{1}(Q, \mathfrak{h})$ is a connection defined on $Q$. 
     We shall consider the $\mathfrak{g}$-valued form $\widetilde{\omega} \in \Omega^{1}(Q \times G, \mathfrak{g})$ given by \begin{equation} \label{omegatilde}\widetilde{\omega}_{(q, g)}(u, v) :=\Ad(g)^{-1}\omega_{q}(u) + \theta_{g} (v), \qquad (u, v) \in T_{(q, g)}(Q \times G). \end{equation}
     where $\theta \in \Omega^{1}(G, \mathfrak{g})$ is the Maurer-Cartan form, $$\theta_{g}(v) = d(l_{g^{-1}})_{g}v, \qquad v \in T_{g}G.$$ 
 where $l, r \colon G \to G$ denote the left and right translations, respectively. We recall some well known properties of the Maurer-Cartan form. \begin{enumerate}[(i)]
        \item (\textit{Left-invariance}) $l_{g}^{\ast} \theta = \theta$, for all $g \in G.$
        \item (\textit{Pseudotensoriality}) $r_{g}^{\ast} \theta = \Ad(g^{-1}) \theta$, for all $g \in G$.
        \item (\text{Maurer-Cartan equation}) $d\theta + \tfrac{1}{2}[\theta, \theta] = 0.$
    \end{enumerate}
    \begin{lema} \label{ad1}
        For all $g \in G$ we have \begin{equation} 
            (R_{g}')^{\ast} \widetilde{\omega} = \Ad(g^{-1}) \widetilde{\omega}
        \end{equation}
    \end{lema}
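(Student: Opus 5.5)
The plan is to verify the identity pointwise on a tangent vector $(u,v)\in T_{(q,h)}(Q\times G)$, treating the two summands of $\widetilde{\omega}$ in (\ref{omegatilde}) separately. The map $R_g'(q,h)=(q,hg)$ does not touch the $Q$-coordinate. Its differential at $(q,h)$ is therefore
\[
(R_g')_{\ast}(u,v)=\bigl(u,\,(r_g)_{\ast}v\bigr),
\]
where $r_g$ denotes right translation on $G$. By definition,
\[
\bigl((R_g')^{\ast}\widetilde{\omega}\bigr)_{(q,h)}(u,v)=\widetilde{\omega}_{(q,hg)}\bigl(u,(r_g)_{\ast}v\bigr)=\Ad(hg)^{-1}\omega_q(u)+\theta_{hg}\bigl((r_g)_{\ast}v\bigr).
\]

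For the first summand I will use multiplicativity of $\Ad$:
\[
\Ad(hg)^{-1}=\Ad(g^{-1})\Ad(h^{-1}),
\qquad\text{so}\qquad
\Ad(hg)^{-1}\omega_q(u)=\Ad(g^{-1})\bigl(\Ad(h)^{-1}\omega_q(u)\bigr).
\]
For the second summand I will invoke property (ii), pseudotensoriality of the Maurer--Cartan form, which says $\theta_{hg}((r_g)_{\ast}v)=\Ad(g^{-1})\theta_h(v)$. If a direct check is wanted, it comes from
\[
l_{(hg)^{-1}}\circ r_g=C_{g^{-1}}\circ l_{h^{-1}},
\]
where $C_{g^{-1}}(x)=g^{-1}xg$ is conjugation. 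Differentiating this at $h$ gives $\Ad(g^{-1})\circ d(l_{h^{-1}})_h$.

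Adding the two pieces and using linearity of $\Ad(g^{-1})$ gives
\[
\Ad(g^{-1})\bigl(\Ad(h)^{-1}\omega_q(u)+\theta_h(v)\bigr)=\Ad(g^{-1})\,\widetilde{\omega}_{(q,h)}(u,v),
\]
which is the claim. Note that the pseudotensoriality of $\omega$ on $Q$ is not needed here. That property becomes relevant only later, when checking that $\widetilde{\omega}$ is basic for the $H$-action and hence descends to $P$.

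I do not expect a serious obstacle. The only point needing care is that $R_g'$ multiplies the $G$-factor on the right, so the Maurer--Cartan contribution transforms via (ii) and not via left invariance (i). One must also keep the order of factors in $\Ad(hg)^{-1}$ correct so that $\Ad(g^{-1})$ comes out on the left.
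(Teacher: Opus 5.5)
Your proof is correct. The paper states this lemma without proof, and your pointwise computation is exactly the routine verification it leaves implicit: multiplicativity of $\Ad$ handles the $\Ad(h)^{-1}\omega_q(u)$ term, and pseudotensoriality (ii) of the Maurer--Cartan form handles the $\theta$ term.

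Your remark that the pseudotensoriality of $\omega$ on $Q$ plays no role here is also right. The paper's proof of the next lemma cites Lemma \ref{ad1} for $\omega_{qh}(u)=\Ad(h^{-1})\omega_q(u')$, but that identity is really the connection axiom for $\omega$, as you point out.
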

    
    \begin{lema}  \begin{enumerate}[(i)] $\widetilde{\omega}$ is horizontal and $H$-invariant, that is 
         \item $\widetilde{\omega}(X^{\#}) = 0$ for every $H$-fundamental vector field $X^{\#}$ as defined in (\ref{fund}). 
        \item $\widetilde{\omega}$ is $H$-invariant. 
        \end{enumerate}
        Thus, there is a unique form $\omega^{e} \in \Omega^{1}(P, \mathfrak{g})$ such that \begin{equation} \label{comu} \pi_{H}^{\ast} \omega^{e} = \widetilde{\omega} \end{equation}
    \end{lema}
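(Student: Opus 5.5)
The plan is to verify (i) and (ii) directly from the definition (\ref{omegatilde}) of $\widetilde{\omega}$. We then invoke the standard descent criterion: a form on the total space of the principal $H$-bundle $\pi_{H}\colon Q\times G\to Q\times_{H}G$ that is horizontal and invariant is the pullback of a unique form on the base. Uniqueness comes for free because $\pi_{H}$ is a surjective submersion, so $\pi_{H}^{\ast}$ is injective on forms.

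For (i), fix $X\in\mathfrak{h}$ and evaluate $\widetilde{\omega}$ on $X^{\#}(q,g)$ from (\ref{fund}).

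\begin{itemize}
\item The first slot gives $\Ad(g)^{-1}\omega_{q}(\widetilde{X}(q))=\Ad(g^{-1})X$, since $\omega$ is a connection and so reproduces generators of fundamental fields.
\item The second slot is the velocity of $t\mapsto\exp(-tX)g=r_{g}(\exp(-tX))$ at $t=0$, namely $d(r_{g})_{e}(-X)$. Applying the Maurer--Cartan form gives $\theta_{g}(d(r_{g})_{e}(-X))=-\Ad(g^{-1})X$.
\end{itemize}

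The two contributions cancel, so $\widetilde{\omega}(X^{\#})=0$.

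For (ii), let $\tau_{h}(q,g)=(qh,h^{-1}g)$ and compute $\tau_{h}^{\ast}\widetilde{\omega}$ on $(u,v)\in T_{(q,g)}(Q\times G)$. Its differential sends $(u,v)$ to $((R_{h})_{\ast}u,(l_{h^{-1}})_{\ast}v)$.

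\begin{itemize}
\item By pseudotensoriality of $\omega$ on $Q$, the first term becomes $\Ad(g^{-1}h)\Ad(h^{-1})\omega_{q}(u)=\Ad(g^{-1})\omega_{q}(u)$.
\item By left-invariance (i) of $\theta$, the second term becomes $\theta_{h^{-1}g}((l_{h^{-1}})_{\ast}v)=\theta_{g}(v)$.
\end{itemize}

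Hence $\tau_{h}^{\ast}\widetilde{\omega}=\widetilde{\omega}$. The only care needed is the bookkeeping $\Ad((h^{-1}g)^{-1})=\Ad(g^{-1})\Ad(h)$, together with the convention $R_{h}^{\ast}\omega=\Ad(h^{-1})\omega$.

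Finally, the existence of $\omega^{e}$ needs a well-defined value on each tangent vector of $P$. Given $w\in T_{[q,g]}P$, choose any lift $\tilde w$ to $T_{(q,g)}(Q\times G)$ and set $\omega^{e}(w)=\widetilde{\omega}(\tilde w)$. There are two ambiguities to rule out.

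\begin{itemize}
\item Lifts at the same point differ by vertical vectors, which are exactly the fields $X^{\#}$ because the $H$-action is free. By (i) these are killed.
\item Lifts at different points of the same orbit are related by $d\tau_{h}$. By (ii) they give the same value.
\end{itemize}

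Smoothness follows from local sections of $\pi_{H}$, which exist because $H$ is closed and the action is free and proper. The main obstacle, mild as it is, is this descent step: one must check that the vertical bundle of $\pi_{H}$ is spanned precisely by the fields in (\ref{fund}). That holds by freeness, since $\tau_{h}$ acts freely already on the $Q$-factor. Lemma \ref{ad1} is not needed here; it serves later to show that $\omega^{e}$ is a connection.
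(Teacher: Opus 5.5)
Your proposal is correct and follows the paper's argument essentially verbatim: part (i) is the same cancellation $\Ad(g^{-1})X-\Ad(g^{-1})X=0$, and part (ii) combines the equivariance of $\omega$ on $Q$ with the left-invariance of $\theta$; you also spell out the descent step, which the paper leaves implicit. You are right that what is actually used in (ii) is the connection property $R_h^{\ast}\omega=\Ad(h^{-1})\omega$ on $Q$, not Lemma \ref{ad1}, which the paper cites at that point.
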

    \begin{proof}
        To prove $(i)$ pick any $X \in \mathfrak{h}$ and write \begin{align*}\widetilde{\omega}(X^{\#}) &= \Ad(g)^{-1} \omega(\widetilde{X}) + \theta \Big(\dfrac{d}{dt} \Big\vert_{t=0} \exp (-tX)g \Big) \\ & = \Ad(g)^{-1} X + \theta (d(R_{g})_{e}(-X)) \\& = \Ad(g)^{-1}X  - \Ad(g^{-1})X =0.\end{align*} For $(ii)$ let $q \in Q, h \in H$ and $g \in G$, we must show that $$\widetilde{\omega}_{(qh, h^{-1}g)}(u, v) = \widetilde{\omega}_{(q, g)}(u', v'), \quad \begin{cases}
        u = (r_{h})_{\ast}u' \\ v = (l_{h^{-1}})_{\ast} v'
    \end{cases}$$ Note that  \begin{align*}
        \omega_{(qh, h^{-1}g)}(u, v) &= \Ad(h^{-1}g)^{-1} \omega_{qh}(u) + \theta_{h^{-1}g}(v).
    \end{align*}
Now, $\theta_{h^{-1}g}(v) = \theta_{g}(v')$ because $\theta$ is left-invariant, and $\omega_{qh}(u) = \Ad(h^{-1})\omega_{q}(u')$ by Lemma \ref{ad1}, therefore the assertion follows. 
    \end{proof}
    \begin{prop} \label{lema 6}
        The form $\omega^{e} \in \Omega^{1}(P, \mathfrak{g})$ defined in (\ref{comu}) is a connection on the $G$-principal bundle $P$.
    \end{prop}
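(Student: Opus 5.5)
To prove Proposition \ref{lema 6}, I will check the two defining properties of a connection form on the principal $G$-bundle $P = Q\times_H G$. First, $\omega^{e}$ must reproduce the generators of fundamental vector fields. Second, it must satisfy the equivariance $R_g^{\ast}\omega^{e} = \Ad(g^{-1})\omega^{e}$. In both cases the strategy is the same: since $\pi_H\colon Q\times G\to P$ is a surjective submersion and $\pi_H^{\ast}\omega^{e} = \widetilde{\omega}$ by (\ref{comu}), each identity on $P$ can be checked upstairs on $Q\times G$ against $\widetilde{\omega}$ and then pushed down.

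\emph{Equivariance.} By (\ref{Rg}) we have $R_g\circ\pi_H = \pi_H\circ R_g'$. Hence
\[
\pi_H^{\ast}R_g^{\ast}\omega^{e} = (R_g')^{\ast}\pi_H^{\ast}\omega^{e} = (R_g')^{\ast}\widetilde{\omega} = \Ad(g^{-1})\widetilde{\omega} = \pi_H^{\ast}\big(\Ad(g^{-1})\omega^{e}\big),
\]
where the third equality is Lemma \ref{ad1}. Injectivity of $\pi_H^{\ast}$ on forms, which holds because $\pi_H$ is a surjective submersion, then gives $R_g^{\ast}\omega^{e} = \Ad(g^{-1})\omega^{e}$.

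Lemma \ref{ad1} is stated without proof, so I would verify it directly. Since $R_g'$ acts only on the $G$-factor, the first term of (\ref{omegatilde}) at $(q,hg)$ becomes $\Ad(hg)^{-1}\omega_q(u) = \Ad(g^{-1})\Ad(h^{-1})\omega_q(u)$. The Maurer–Cartan term transforms by pseudotensoriality, property (ii) of $\theta$.

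\emph{Vertical reproduction.} Fix $Y\in\mathfrak{g}$ and let $Y^{\ast}$ be its fundamental field on $P$. For the lift, the curve $t\mapsto [q, g\exp tY]$ lifts to $t\mapsto (q, g\exp tY)$ in $Q\times G$. Its velocity is $(0, d(l_g)_e Y)$, and this vector projects under $\pi_H$ to $Y^{\ast}([q,g])$. Evaluating (\ref{omegatilde}) on it gives
\[
\widetilde{\omega}(0, d(l_g)_eY) = \theta_g(d(l_g)_eY) = Y.
\]
Therefore $\omega^{e}(Y^{\ast}) = Y$.

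Finally, I would note that $\omega^{e}$ is smooth: it is obtained from the smooth form $\widetilde{\omega}$ by descent along a submersion. With both properties established, $\omega^{e}$ is a connection on $P$.

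The only real subtlety is making sure the descent argument is legitimate, namely that $\pi_H^{\ast}$ is injective on forms. This is where the previous lemma enters: it shows $\widetilde{\omega}$ is $H$-basic, so $\omega^{e}$ exists uniquely with $\pi_H^{\ast}\omega^{e} = \widetilde{\omega}$, and $\pi_H$ being a surjective submersion makes the pullback injective. Everything else is a routine computation.
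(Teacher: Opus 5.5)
Your proposal is correct and follows the paper's proof essentially step for step. Equivariance comes from $R_g\circ\pi_H=\pi_H\circ R_g'$, Lemma \ref{ad1} and injectivity of $\pi_H^{\ast}$, and the reproduction of generators comes from lifting $t\mapsto[q,g\exp tY]$ to $(q,g\exp tY)$ and evaluating the Maurer--Cartan term; your direct check of Lemma \ref{ad1}, which the paper leaves unproved, is also correct.
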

    \begin{proof}
         By  Equations (\ref{comu}), (\ref{Rg}) and (\ref{ad1}) we have \begin{align*} \pi_{H}^{\ast}
         R_{g}^{\ast}\omega^{e} &= (R_{g} \circ \pi_{H})^{\ast} \omega^{e} = (\pi_{H} \circ R'_{g})^{\ast} \omega^{e} = (R_{g}')^{\ast} \pi_{H}^{\ast} \omega^{e} \\ &= (R_{g}')^{\ast} \widetilde{\omega} = \Ad(g^{-1})\widetilde{\omega} = \Ad(g^{-1}) \pi_{H}^{\ast} \omega^{e} = \pi_{H}^{\ast} (\Ad(g^{-1}) \omega^{e}) \end{align*} that is $$R_{g}^{\ast} \omega^{e} = \Ad(g^{-1}) \omega^{e}.$$ 
         Now, let $X\in\mathfrak g$. 
The fundamental vector field of the right $G$-action on $P$ is given by 
\begin{align*} X^\#_{[q,g]} &=
\frac{d}{dt}\Big|_{t=0} R_{\exp tX} (\pi_{H}(q, g)) = \dfrac{d}{dt}\Big\vert_{t=0}\pi_{H}(q,g\exp(tX)) = d\pi_{H} \: (0, d(l_{g})_{e}X)  \end{align*}
Whence $$\omega^{e}(X^{\#}([q, g])) = \theta_{g}(d(l_{g})_{e}X) = X$$ and thus  $\omega^{e}$ reproduces the generators of the fundamental vector fields, and the proof is concluded.
    \end{proof}
    \begin{lema} \label{dif} For any $X \in \mathfrak{g}$, define  $f \colon G \to \mathfrak{g}$ by $f(g) = \Ad(g^{-1})X$. Then its differential is $$df_{g}(v) = [\Ad(g^{-1})X, \theta(v)], \qquad v \in T_{g}G.$$
\end{lema}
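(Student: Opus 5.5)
We prove Lemma \ref{dif} by reducing to a curve computation through the identity. Fix $g \in G$ and $v \in T_{g}G$. Since every tangent vector at $g$ is left-translated from the identity, write $v = d(l_{g})_{e}Y$ with $Y = \theta_{g}(v) \in \mathfrak{g}$. The curve $c(t) = g\exp(tY)$ satisfies $c(0)=g$ and $c'(0) = d(l_{g})_{e}Y = v$, so $df_{g}(v) = \frac{d}{dt}\big|_{t=0} f(g\exp tY)$. This is the only choice to make: using left translation, rather than right translation, is what makes the Maurer--Cartan form $\theta(v)$ appear in the answer.

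Next we expand $f$ along this curve. We have $f(g\exp tY) = \Ad\big((g\exp tY)^{-1}\big)X = \Ad(\exp(-tY))\,\Ad(g^{-1})X$. We then use that $\Ad$ is a homomorphism and that $\Ad(\exp(-tY)) = e^{-t\,\mathrm{ad}(Y)}$. Differentiating at $t=0$ gives
$$df_{g}(v) = -\mathrm{ad}(Y)\big(\Ad(g^{-1})X\big) = -[Y, \Ad(g^{-1})X] = [\Ad(g^{-1})X, Y].$$
Substituting $Y = \theta(v)$ gives the claimed formula $df_{g}(v) = [\Ad(g^{-1})X, \theta(v)]$.

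No step here is a real obstacle. The one point to check is the order of factors: $(g\exp tY)^{-1} = \exp(-tY)g^{-1}$, so the $\exp(-tY)$ factor acts on the outside of $\Ad(g^{-1})X$. This is what produces the sign, and with it the bracket in the stated order. An equivalent check is to write $f = \Ad(\cdot)^{-1}X$ and verify that $f(gh) = \Ad(h^{-1})f(g)$. This equivariance shows directly that the derivative in the direction $d(l_{g})_{e}Y$ is $-\mathrm{ad}(Y)f(g)$.
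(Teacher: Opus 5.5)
Your proof is correct and is essentially the paper's argument. Both write $v = d(l_{g})_{e}Y$ with $Y=\theta(v)$, differentiate $f$ along the curve $g\exp(tY)$, and use $\Ad\big((g\exp tY)^{-1}\big)X=\Ad(\exp(-tY))\Ad(g^{-1})X$ to obtain $[\Ad(g^{-1})X, Y]$.
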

\begin{proof}
    Let $g \in G$ and $v \in T_{g}G$, so that $v = (L_{g})_{\ast} Y$ where $Y \in \mathfrak{g}$, note that $\theta(v) = Y.$ Define $g(t) = g\exp tY \in G$ and compute \begin{align*}df_{g}(v) &= \dfrac{d}{dt} \Big\vert_{t=0} f(g(t)) \\&= \dfrac{d}{dt} \Big\vert_{t=0}  \Ad( \exp(-tY)) \Ad(g^{-1}) X \\&= [\Ad(g^{-1})X, Y] \\&= [\Ad(g^{-1})X, \theta(v)].
    \end{align*}
\end{proof}
    \begin{lema}
    If $\omega \in \Omega^{1}(P, \mathfrak{g})$ define $A \in \Omega^{1}(P \times G, \mathfrak{\mathfrak{g}})$ by $A_{(p, g)} := \Ad(g^{-1}) \omega_{p}$. Then $$dA_{(p, g)} = \Ad(g^{-1}) d\omega_{p} - [A, \theta]_{p}.$$
\end{lema}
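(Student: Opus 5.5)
The identity is local and pointwise, so I will verify it on an arbitrary tangent vector $(u,v)\in T_{(p,g)}(P\times G)$. The plan is to apply the Leibniz rule to the product of the $\mathfrak{g}$-valued function $(p,g)\mapsto \Ad(g^{-1})$, acting as a linear operator, with the pulled-back $1$-form $\mathrm{pr}_{1}^{\ast}\omega$.

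\emph{Step 1: rewrite $A$ as a sum of products.} Choose a basis $\{E_{i}\}$ of $\mathfrak{g}$ and write $\omega=\sum_{i}\omega^{i}E_{i}$ with real $1$-forms $\omega^{i}$. Then
\[
A_{(p,g)}=\sum_{i} f_{i}(g)\,(\mathrm{pr}_{1}^{\ast}\omega^{i})_{(p,g)}, \qquad f_{i}(g)=\Ad(g^{-1})E_{i}.
\]
Here each $f_{i}$ is a $\mathfrak{g}$-valued function pulled back from $G$, and each $\omega^{i}$ is a scalar $1$-form pulled back from $P$.

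\emph{Step 2: apply the Leibniz rule.} This gives
\[
dA=\sum_{i} df_{i}\wedge \omega^{i}+\sum_{i} f_{i}\,d\omega^{i}.
\]
The second sum is exactly $\Ad(g^{-1})\,d\omega_{p}$. For the first sum, Lemma \ref{dif} gives $df_{i}=[\Ad(g^{-1})E_{i},\theta]$, so
\[
\sum_{i} df_{i}\wedge\omega^{i}=\sum_{i}[\Ad(g^{-1})E_{i},\theta]\wedge\omega^{i}.
\]

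\emph{Step 3: identify the bracket term.} This is the only delicate point, because the sign and the factor conventions for $[\cdot,\cdot]$ of $\mathfrak{g}$-valued forms must match the paper's. Evaluating on $(u_{1},v_{1}),(u_{2},v_{2})$, the first sum equals
\[
[\theta(v_{1}),A(u_{2})]\,(-1)\;+\;\dots
\]
More precisely, the expression
\[
[A(u_{2}),\theta(v_{1})]-[A(u_{1}),\theta(v_{2})]
\]
should be compared with the convention
\[
[A,\theta](X,Y)=[A(X),\theta(Y)]-[A(Y),\theta(X)].
\]
Under that convention the term equals $-[A,\theta]$, which is what the statement asserts. I will fix this convention and check the sign carefully on decomposable vectors.

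Since the $u$-components are killed by $\theta$ and the $v$-components are killed by $A$, only the mixed terms survive. This makes the sign check a short computation, and it is the only place where I expect any care to be needed.
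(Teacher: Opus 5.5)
Your proposal is correct and follows essentially the same route as the paper. Like the paper, you expand $\omega=\sum_i\omega^iE_i$, apply the Leibniz rule, use Lemma \ref{dif} to get $df_i=[\Ad(g^{-1})E_i,\theta]$, and recognize $\sum_i df_i\wedge\omega^i=-[A,\theta]$ under the convention $[A,\theta](X,Y)=[A(X),\theta(Y)]-[A(Y),\theta(X)]$, which is the one the paper uses implicitly. Your displayed expression $[A(u_2),\theta(v_1)]-[A(u_1),\theta(v_2)]$ already completes the sign check, so you can drop the unfinished line before it and the promise of a further check.
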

\begin{proof}
   Write locally $\omega = \sum_{i} \omega_{i} E_{i}$, where $E_{i}$ is a basis of $\mathfrak{g}$ and $\omega_{i}$ is a one-form locally defined on $P$. Then $A = \sum \omega_{i} A_{i}$ where $A_{i} \colon G \to \mathfrak{g} $ is given by $A_{i}(g)=\Ad(g^{-1}) E_{i}$, therefore $d(\omega_{i} A_{i}) = d\omega_{i} A_{i}
    - \omega_{i} \wedge dA_{i}$ and $$dA = \sum_{i} d\omega_{i} A_{i} -\sum_{i} \omega_{i} \wedge dA_{i}.$$ Now, by Lemma \ref{dif} we have $$dA_{i} = [\Ad(g^{-1})E_{i}, \theta] \in \Omega^{1}(G, \mathfrak{g})$$ and $(\omega_{i} \wedge dA_{i})(X, Y) = \omega_{i}(X) dA_{i}(Y) - \omega_{i}(Y) dA_{i}(X)$ whence \begin{align*}\Big(\sum_{i} \omega_{i} \wedge dA_{i} \Big)(X, Y) &= \sum_{i} \omega_{i}(X) \ [A_{i}, \theta(Y)] - \omega_{i}(Y)[A_{i}, \theta(X)] \\&=\Big[\sum_{i} \omega_{i} A_{i}, \theta \Big](X, Y) \\&= [A, \theta](X, Y).
    \end{align*}
\end{proof}
\begin{teo} \label{most}
    If $F \in \Omega^{2}(Q, \mathfrak{h})$ denotes the curvature of the connection $\omega$ and $F^{e} \in \Omega^{2}(P, \mathfrak{g})$ denotes the curvature of $\omega^{e}$, we have $$F^{e}_{[q, g]} = \Ad(g^{-1}) F_{q}.$$
\end{teo}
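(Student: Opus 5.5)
We pull the curvature back to $Q\times G$ and compute it there from the structure equation. Since $\pi_H\colon Q\times G\to P$ is a surjective submersion, the pullback $\pi_H^{\ast}$ is injective on forms. It therefore suffices to show
\[
\pi_H^{\ast}F^{e}=\widetilde{F},\qquad \widetilde F_{(q,g)}:=\Ad(g^{-1})F_q ,
\]
where $F_q$ is pulled back along the projection $Q\times G\to Q$. Once this holds, $\widetilde F$ is automatically $H$-basic and descends, and the formula $F^{e}_{[q,g]}=\Ad(g^{-1})F_q$ is just its descended form.

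The curvature is given by the structure equation $F^{e}=d\omega^{e}+\tfrac12[\omega^{e},\omega^{e}]$. Pullback commutes with $d$ and with the bracket of Lie-algebra-valued forms, so (\ref{comu}) gives
\[
\pi_H^{\ast}F^{e}=d\widetilde\omega+\tfrac12[\widetilde\omega,\widetilde\omega].
\]
Write $\widetilde\omega=A+\theta$ with $A_{(q,g)}=\Ad(g^{-1})\omega_q$. The lemma preceding the theorem, applied with $Q$ in place of $P$, gives
\[
dA=\Ad(g^{-1})d\omega-[A,\theta].
\]
The Maurer--Cartan equation gives $d\theta=-\tfrac12[\theta,\theta]$. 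For the quadratic term we use bilinearity and the symmetry $[\alpha,\beta]=[\beta,\alpha]$ for $\mathfrak g$-valued $1$-forms:
\[
\tfrac12[A+\theta,A+\theta]=\tfrac12[A,A]+[A,\theta]+\tfrac12[\theta,\theta].
\]

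Adding everything, the $[A,\theta]$ terms cancel and the $\tfrac12[\theta,\theta]$ terms cancel. What remains is $\Ad(g^{-1})d\omega+\tfrac12[A,A]$. Since $\Ad(g^{-1})$ is a Lie algebra automorphism, $[A,A]=\Ad(g^{-1})[\omega,\omega]$. Hence
\[
\pi_H^{\ast}F^{e}=\Ad(g^{-1})\big(d\omega+\tfrac12[\omega,\omega]\big)=\Ad(g^{-1})F,
\]
which is the claim.

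The main point requiring care is that the sign conventions are consistent throughout. The wedge-bracket convention must be the one used in the preceding lemma, namely $[\alpha,\beta](X,Y)=[\alpha(X),\beta(Y)]-[\alpha(Y),\beta(X)]$. It must also agree with the normalization in the structure equation, so that the cross terms cancel exactly and no factor of $2$ is left over. I would check this first by evaluating both sides on a pair of vectors $(u,0),(0,v)$: there $d\omega$ contributes nothing and $\theta$ vanishes on the first slot, so only $[A,\theta]$ terms can appear, and they must cancel.

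I also need the lemma for $dA$ in the form stated. The paper's version is written for $P\times G$ with a $\mathfrak g$-valued $\omega$; its proof is local and works verbatim for an $\mathfrak h$-valued form on $Q$ viewed as $\mathfrak g$-valued.
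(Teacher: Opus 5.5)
Your proposal is correct and follows the paper's proof essentially step for step: you pull back to $Q\times G$, split $\widetilde\omega=A+\theta$, apply the preceding lemma for $dA$ and the Maurer--Cartan equation, and let the $[A,\theta]$ and $[\theta,\theta]$ terms cancel. Your remarks on the injectivity of $\pi_H^{\ast}$ and on the bracket convention $[\alpha,\beta](X,Y)=[\alpha(X),\beta(Y)]-[\alpha(Y),\beta(X)]$ make explicit points that the paper leaves implicit.
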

\begin{proof}
    Let us define $\widetilde{F} := d\widetilde{\omega} + \tfrac{1}{2}[\widetilde{\omega}, \widetilde{\omega}] \in \Omega^{2}(Q \times G, \mathfrak{g})$, where $\widetilde{\omega}$ was defined in \ref{omegatilde}. By the structure equation, $$\pi_{H}^{\ast} F^{e} = \widetilde{F}.$$  But, $$[\widetilde{\omega}, \widetilde{\omega}] = [A + \theta, A+\theta] = [A, A] + 2[A, \theta] + [\theta, \theta]$$ where $A \in \Omega^{1}(Q \times G, \mathfrak{g})$ is given by $A_{(q, g)} = \Ad(g^{-1}) \omega_{q}$. Now, by the previous Lemma we have $$d\widetilde{\omega}_{(q, g)} = \Ad(g^{-1})d\omega_{q} - [A, \theta]_{q} + d\theta_{g}$$ whence by the Maurer-Cartan equation we conclude  $$\widetilde{F}_{(q, g)} = \Ad(g^{-1}) (d\omega_{q} + \tfrac{1}{2}[\omega, \omega]_{q}) = \Ad(g^{-1})F_{q}. $$
\end{proof}
\begin{obs} \label{ext} 
The principal bundle $H \to Q \to M$ embeds naturally on the $G$-principal bundle $Q \times_{H} G$ via  $$Q \hookrightarrow Q \times G \stackrel{\pi_{H}}{\longrightarrow} Q \times_{H} G.$$ Hence we shall regard the connection $\widetilde{\omega} \in \Omega^{1}(Q \times_{H} G, \mathfrak{g})$ of Lemma \ref{lema 6} as an extension of the connection $\omega \in \Omega^{1}(Q, \mathfrak{h})$. 
\end{obs}

\section*{Appendix II: The dual flag $\mathbb{F}_{\Theta}^{\ast}$} \label{appendix 2}
The Weyl group $\mathcal{W}$ of the restricted root system $\Delta(\mathfrak{g}, \mathfrak{a})$ has a distinguished element $w_{0} \in \mathcal{W}$ characterized by the property that $w_{0}^{2} = 1$ and $-w_{0}(\Sigma) = \Sigma.$ Such $w_{0}$ is called a \textit{principal involution} of $\mathcal{W}$. Given $\Theta \subset \Sigma$ we define its dual $\Theta^{\ast} \subset \Sigma$ by $$\Theta^{\ast} := -w_{0} \Theta.$$ 
Since the Weyl group can be realized as quotient $$\mathcal{W} = N_{K}(\mathfrak{a})/Z_{K}(\mathfrak{a})$$ we shall put $\widetilde{w} \in N_{K}(\mathfrak{a})$ a representative of $w \in \mathcal{W}$.
 \\ Note that $\alpha \in \Theta^{\ast}$ if and only if there exists a unique $\beta \in \Theta$ with $-\alpha = w_{0} \beta$, hence since $\Ad(\widetilde{w}) \g_{\alpha} = \g_{w \alpha}$ we have  \begin{equation*}
        \sum_{\alpha \in \langle \Theta^{\ast} \rangle^{+}} \mathfrak{g}_{-\alpha} = \sum_{\beta \in \langle \Theta \rangle^{+}} \mathfrak{g}_{w_{0}\beta} = \Ad(\widetilde{w}_{0}) \Big( \sum_{\beta \in \langle \Theta \rangle^{+}} \mathfrak{g}_{\beta} \Big).
    \end{equation*}
    If $\mathfrak{b}^{-} = \mathfrak{m} \oplus \mathfrak{a} \oplus \mathfrak{n}^{-}$ and $\mathfrak{p}_{\Theta}^{-} = \mathfrak{n}_{\Theta}^{-} \oplus \mathfrak{z}_{\Theta}$ we have \begin{equation*} \label{important one} \mathfrak{p}_{\Theta^{\ast}}^{+} = \Ad(\widetilde{w}_{0}) (\mathfrak{b}^{-} \oplus \sum _{\alpha \in \langle \Theta \rangle^{+}} \mathfrak{g}_{\alpha}) = \Ad(\widetilde{w}_{0}) \mathfrak{p}_{\Theta}^{-},\end{equation*}
so it follows that
\begin{equation} \label{important} P_{\Theta^{\ast}}^{+} = \widetilde{w}_{0} P_{\Theta}^{-} \widetilde{w}_{0}^{-1} = \widetilde{w}_{0} P_{\Theta}^{-} \widetilde{w}_{0}.\end{equation}
The \textit{dual flag} of $\mathbb{F}_{\Theta}$ is defined as the flag corresponding to $\Theta^{\ast}$,
$$\mathbb{F}_{\Theta}^{\ast}:= \mathbb{F}_{\Theta^{\ast}} = G/P_{\Theta^{\ast}}^{+}.$$
Denote by $o:=eP_{\Theta}^{+}$ the origin of the flag $\mathbb{F}_{\Theta}$, and set
$$o_{w_{0}} := \widetilde{w}_{0} P_{\Theta^{\ast}}^{+} \in \mathbb{F}_{\Theta}^{\ast},$$
so that $\mathbb{F}_{\Theta}^{\ast} = G \cdot o_{w_{0}}$. The point $o_{w_{0}}$ is usually be referred as the origin of the dual flag $\mathbb{F}_{\Theta}^{\ast}$.
\begin{lema}
    $G$ acts diagonally on the product $\mathbb{F}_{\Theta} \times \mathbb{F}_{\Theta}^{\ast}$ and we have a diffeomorphism $$G \cdot (o, o_{w_{0}}) := \{(go, go_{w_{0}}) : g \in G\} \cong G/Z_{\Theta}.$$ Moreover, the orbit $G \cdot (o, o_{w_{0}}) \subset \mathbb{F}_{\Theta} \times \mathbb{F}_{\Theta}^{\ast}$ is open and dense. 
\end{lema}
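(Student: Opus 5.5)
The plan is to compute the stabilizer of the base point $(o, o_{w_0})$ under the diagonal action, identify it with $Z_{\Theta}$, and then deduce openness from a dimension count and density from the Bruhat decomposition.

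\textbf{Step 1: the stabilizer.} The stabilizer is $P_{\Theta} \cap \widetilde{w}_0 P_{\Theta^{\ast}}^{+} \widetilde{w}_0^{-1}$. By equation (\ref{important}) we have $\widetilde{w}_0 P_{\Theta^{\ast}}^{+}\widetilde{w}_0^{-1} = P_{\Theta}^{-}$. Here I use $\widetilde{w}_0^{2} \in M \subset Z_{\Theta}$, so that conjugating by $\widetilde{w}_0$ or by $\widetilde{w}_0^{-1}$ gives the same subgroup. Hence the stabilizer is $P_{\Theta} \cap P_{\Theta}^{-}$.

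\textbf{Step 2: $P_{\Theta}\cap P_{\Theta}^{-} = Z_{\Theta}$.} This fact was already used in the proof of Proposition \ref{K stab}, but I would justify it. At the Lie algebra level, $\mathfrak{p}_{\Theta} \cap \mathfrak{p}_{\Theta}^{-} = \mathfrak{z}_{\Theta}$ by the root space decomposition. At the group level, write $g \in P_{\Theta}\cap P_{\Theta}^{-}$ as $g = zn$ with $z\in Z_{\Theta}$ and $n \in N_{\Theta}^{+}$. Then $n = z^{-1}g \in P_{\Theta}^{-}\cap N_{\Theta}^{+}$. Since $N_{\Theta}^{+}$ is unipotent and $\Ad(n)$ must preserve $\mathfrak{p}_{\Theta}^{-}$, we get $\log n \in \mathfrak{n}_{\Theta}^{+} \cap \mathfrak{p}_{\Theta}^{-} = 0$. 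To make the preservation step concrete, use that $\Ad(n)H_{\Theta} - H_{\Theta} \in \mathfrak{n}_{\Theta}^{+}$ vanishes only when $n = e$, because $\operatorname{ad}(H_{\Theta})$ is injective on $\mathfrak{n}_{\Theta}^{+}$. Together with the fact that $g \in P_{\Theta}^{-}$ forces the $\mathfrak{n}_{\Theta}^{+}$-component of $\Ad(g)H_{\Theta}$ to vanish, this shows $n = e$. Consequently the orbit map induces an injective immersion $G/Z_{\Theta} \to \mathbb{F}_{\Theta}\times\mathbb{F}_{\Theta}^{\ast}$.

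\textbf{Step 3: openness.} Count dimensions: $\dim G/Z_{\Theta} = \dim \mathfrak{n}_{\Theta}^{-} + \dim \mathfrak{n}_{\Theta}^{+} = \dim \mathbb{F}_{\Theta} + \dim\mathbb{F}_{\Theta}^{\ast}$, since $\dim \mathfrak{n}_{\Theta^{\ast}}^{\pm} = \dim\mathfrak{n}_{\Theta}^{\pm}$ via $\Ad(\widetilde{w}_0)$. An injective immersion between manifolds of equal dimension is an open embedding, hence a diffeomorphism onto its image, and that image is open. Alternatively, the differential at the base point is $\mathfrak{g}/\mathfrak{z}_{\Theta} \to \mathfrak{g}/\mathfrak{p}_{\Theta} \oplus \mathfrak{g}/\mathfrak{p}_{\Theta}^{-}$, which is an isomorphism because $\mathfrak{g} = \mathfrak{n}_{\Theta}^{-}\oplus\mathfrak{z}_{\Theta}\oplus\mathfrak{n}_{\Theta}^{+}$.

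\textbf{Step 4: density, the main obstacle.} I would reduce to the factor $\mathbb{F}_{\Theta}^{\ast}$. Because the orbit is $G$-invariant and $G$ acts transitively on $\mathbb{F}_{\Theta}$, it suffices to show that its intersection with the fibre $\{o\}\times\mathbb{F}_{\Theta}^{\ast}$ is dense in that fibre. This intersection is the $P_{\Theta}$-orbit $P_{\Theta}\cdot o_{w_0}$. The Bruhat decomposition $G = \bigsqcup_{w} P_{\Theta}\widetilde{w}P_{\Theta^{\ast}}$ expresses $\mathbb{F}_{\Theta}^{\ast}$ as a finite union of $P_{\Theta}$-orbits, which are submanifolds. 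Among these, $P_{\Theta}\cdot o_{w_0} \supset N_{\Theta}^{+}\cdot o_{w_0}$ is open of full dimension, as Step 3 shows. The remaining cells have strictly smaller dimension, so their union has empty interior, and therefore the open cell is dense. As a backup argument, I would prove that the open orbit is the complement of a proper real-analytic subset of the connected analytic manifold $\mathbb{F}_{\Theta}\times\mathbb{F}_{\Theta}^{\ast}$, which again gives density.
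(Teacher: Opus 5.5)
Your argument is correct. Its Step 1 is exactly the paper's proof: the paper computes $\Stab_G(o,o_{w_0}) = P_\Theta^+\cap P_\Theta^-$ from (\ref{important}), asserts $P_\Theta^+\cap P_\Theta^- = Z_\Theta$ without justification, and says the identification follows. The paper gives no argument at all for openness or density. It also absorbs the conjugation issue you handle with $\widetilde{w}_0^2\in M\subset P_\Theta^-$ silently into (\ref{important}).

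Your Steps 2--4 therefore supply what the paper omits:
\begin{itemize}
\item The argument with $\mathrm{ad}(H_\Theta)$ proves $P_\Theta\cap P_\Theta^- = Z_\Theta$. Injectivity of $\mathrm{ad}(H_\Theta)$ on $\mathfrak{n}_\Theta^+$ really does suffice here: if $e^{\mathrm{ad}(X)}H_\Theta = H_\Theta$, then $\mathrm{ad}(X)$, being a polynomial without constant term in $e^{\mathrm{ad}(X)}-1$, kills $H_\Theta$.
\item The isomorphism $\mathfrak{g}/\mathfrak{z}_\Theta\to \mathfrak{g}/\mathfrak{p}_\Theta\oplus\mathfrak{g}/\mathfrak{p}_\Theta^-$ gives openness. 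It also upgrades the bijection $G/Z_\Theta\to G\cdot(o,o_{w_0})$ to an open embedding, which is what makes ``diffeomorphism'' meaningful for the subspace structure.
\item Reducing to the fibre $\{o\}\times\mathbb{F}_\Theta^{\ast}$ turns density into a statement about $P_\Theta$-orbits on $\mathbb{F}_\Theta^{\ast}$.
\end{itemize}

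The one step you should not leave as a bare assertion is ``the remaining cells have strictly smaller dimension.'' Finiteness of the orbit decomposition alone does not rule out several open orbits; for example, $\mathbb{R}^{\ast}$ acting on $\mathbb{R}$ by multiplication has two. So this must be quoted from Bruhat theory: the big cell is unique, or equivalently the dimension of $P_\Theta\widetilde{w}P_{\Theta^{\ast}}/P_{\Theta^{\ast}}$ is maximal only for the double coset of $w_0$. Alternatively, make your analytic-complement backup precise by exhibiting the complement as the zero set of a nontrivial analytic function. With either of these, your proof is complete and more thorough than the paper's.
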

\begin{proof}In order to prove the identification we just compute the stabilizer of $(o, o_{w_{0}})$ which is $$\Stab_{G}(o, o_{w_{0}}) = \Stab_{G}(o) \cap \Stab_{G}(o_{w}).$$ 
Now, $\Stab_{G}(o) = P_{\Theta}^{+}$ and $g \in \Stab_{G}(o_{w})$ if and only if $g \widetilde{w}_{0} P_{\Theta^{\ast}}^{+} = \widetilde{w}_{0}P_{\Theta^{\ast}}^{+}$ thus $\widetilde{w}_{0}^{-1} g \widetilde{w}_{0} \in P_{\Theta^{\ast}}$ which, by (\ref{important}), only happens if and only if $g \in P_{\Theta}^{-}$. Hence $\Stab_{G}(o_{w}) = P_{\Theta}^{-}$ and $\Stab_{G}(o, o_{w}) = P_{\Theta}^{+} \cap P_{\Theta}^{-} = Z_{\Theta}$ and the identification follows. 
\end{proof}

\section{Acknowledgments}
J.F. is supported by The São Paulo Research Foundation (FAPESP) grant 2025/22423-8. L. G. is partially supported by The São Paulo Research Foundation (FAPESP) grants 2023/13131-8 and 2024/00923-6; and by National Council for Scientific and Technological Development (CNPq) grant 306021/2024-2.

\bibliographystyle{plain}
\bibliography{mybib}
\end{document}